\newtheorem{theorem}{Theorem}[section]
\newtheorem{lemma}[theorem]{Lemma}
\newtheorem{corollary}[theorem]{Corollary}
\theoremstyle{definition}
\newtheorem{definition}{Definition}
\newtheorem{remark}{Remark}
\newtheorem{problem}{Problem}
\begin{document}

\title[Diophantine approximation in prescribed degree]{Diophantine approximation in prescribed degree}

\author{Johannes Schleischitz}
                            
\thanks{Research supported by the Schr\"odinger scholarship J 3824 of the Austrian Science Fund FWF.\\
Department of Mathematics and Statistics, University of Ottawa, Canada  \\ 
johannes.schleischitz@univie.ac.at}

\begin{abstract}
We investigate approximation to a given real number by algebraic numbers and algebraic integers of prescribed degree. 
We deal with both best and uniform approximation,
and highlight the similarities and differences compared
with the intensely studied problem of approximation by algebraic
numbers (and integers) of bounded degree. We
establish the answer to a question of Bugeaud concerning approximation
to transcendental real numbers by quadratic irrational numbers, and  
thereby we refine a result of Davenport and Schmidt from 1967.
We also obtain several new characterizations of Liouville numbers, 
and certain new insights
on inhomogeneous Diophantine approximation. As an auxiliary side result,
we provide an upper bound for the number of 
certain linear combinations of two given
relatively prime integer polynomials with a linear factor.
We conclude with several open problems.
\end{abstract}

\maketitle

{\footnotesize{
{\em Keywords}: exponents of Diophantine approximation, Wirsing's problem, geometry of numbers, continued fractions \\
{\em Math Subject Classification 2010}: 11J13, 11J82, 11R09}}

\section{Introduction}

\subsection{Outline and notation}  \label{out}

The famous Dirichlet Theorem asserts that for any real number $\zeta$
and any parameter $X>1$, the estimate
\begin{equation} \label{eq:gle}
\vert \zeta-\frac{p}{q}\vert \leq X^{-1}q^{-1}
\end{equation}
has a rational solution $p/q$ with $1\leq q\leq X$.
A well-known immediate implication is the fact that for
any $\zeta\in \mathbb{R}\setminus \mathbb{Q}$
there exist infinitely many rational numbers $p/q$ that satisfy
\begin{equation} \label{eq:gle1}
\vert \zeta-\frac{p}{q}\vert \leq q^{-2}.
\end{equation}
We will also refer to a situation as in \eqref{eq:gle}
as {\em uniform approximation}, whereas \eqref{eq:gle1} 
is a result on {\em best approximation}. 
There has been much research on generalizations 
of \eqref{eq:gle} and \eqref{eq:gle1} concerning
approximation to a real number by 
algebraic numbers degree at most $n$, 
for some given positive integer $n$. 
Define the height $H(P)$ of a polynomial $P$ 
as the maximum absolute value among its coefficients. For 
an algebraic real number $\alpha$, define its height by $H(\alpha)=H(P)$ with $P$ the minimal
polynomial of $\alpha$ with coprime integral coefficients.
Wirsing~\cite{wirsing} proposed the following generalization of
\eqref{eq:gle1}.
For any real number $\zeta$ not algebraic of degree at most $n$,
and any $\epsilon>0$, does the inequality
\begin{equation} \label{eq:dd}
\vert \zeta-\alpha\vert \leq H(\alpha)^{-n-1+\epsilon}
\end{equation}
have infinitely many solutions in algebraic real 
numbers $\alpha$ of degree at most $n$? 
Roughly speaking, is any number not algebraic of degree at most $n$
approximable to degree $n+1$ by algebraic numbers 
of degree at most $n$?
Wirsing could only prove the claim for the 
exponents $-(n+3)/2+\epsilon$,
and despite some effort there has been little improvement since. 
The current 
best exponents due to Tishchenko~\cite{Tsi07} is 
still of the form $-n/2-C_{n}$ for constants $C_{n}<4$.
It is well-known that the corresponding uniform claim, as in \eqref{eq:gle},
is in general false when $n\geq 2$, 
see for example~\cite[Theorem~2.4]{bschlei}.
In case of $\zeta$ an algebraic number of degree at least $n+1$,
the claim \eqref{eq:dd} is true as a consequence of Schmidt's Subspace Theorem. 
Schmidt~\cite{512} raised the question whether \eqref{eq:dd} 
can be sharpened by replacing  the
$\epsilon$ in the exponent by some multiplicative constant. 
Observe that this refined version is no
longer guaranteed even for algebraic numbers.
However, Davenport and Schmidt~\cite{davs} already in 1967 
verified the stronger version
in the special case $n=2$. Indeed, they showed 
that for any $\zeta$ not algebraic of degree at most two
and some $c=c(\zeta)>0$, the estimate
\begin{equation} \label{eq:dsc}
\vert \zeta-\alpha\vert \leq cH(\alpha)^{-3}
\end{equation}
has infinitely many rational or quadratic irrational real 
solutions $\alpha$. Any constant 
$c>\frac{160}{9}\max\{ 1,\zeta^{2}\}$ can be chosen.
In this paper we investigate approximation to a real number
by algebraic numbers of prescribed degree $n\geq 2$. This topic, in contrast to bounded degree,
has been rather poorly investigated.
Some results on this topic are due to 
Bugeaud and Teulie~\cite{bugbuch, teu, teuallein}. 
A variant of Wirsing's problem, or Schmidt's version of it, 
studied by Bugeaud~\cite[Problem~23, Section~10.2]{bugbuch}
is to restrict the degree of $\alpha$ equal to $n$.
Bugeaud himself had recently expressed 
doubts~\cite[Problem~2.9.2]{bdraft} on a positive answer, 
even for $n=2$, rooting in the fact that
the claim is in general false for $\alpha$ cubic algebraic integers
instead of quadratic numbers, as shown by Roy~\cite{roy2, roy}. However,
our first new result shows that for $n=2$ the problem does indeed
have an affirmative answer, and thereby we refine the result of Davenport and Schmidt.

\begin{theorem} \label{bugpro}
Let $\zeta$ be a real number not rational or quadratic irrational. Then, for
some effectively computable constant $c=c(\zeta)$, there exist infinitely many quadratic irrational real numbers $\alpha$
for which the inequality \eqref{eq:dsc} is satisfied.
\end{theorem}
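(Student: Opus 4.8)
The plan is to upgrade the Davenport–Schmidt argument so that the approximating algebraic numbers are genuinely quadratic irrational rather than possibly rational. Recall how \eqref{eq:dsc} is obtained: for a large parameter $X$ one applies Minkowski's convex body theorem (or Davenport–Schmidt's Lemma on the lattice of integer binary quadratic forms evaluated at $(1,\zeta,\zeta^2)$) to produce an integer quadratic polynomial $P_X(T)=a_XT^2+b_XT+c_X$ with $H(P_X)\le X$ and $|P_X(\zeta)|\ll X^{-2}$. If $P_X$ is irreducible, its root $\alpha$ nearest $\zeta$ is quadratic irrational and a standard estimate relating $|P_X(\zeta)|$, $|P_X'(\zeta)|$ and $H(\alpha)$ yields $|\zeta-\alpha|\ll H(\alpha)^{-3}$. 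The only obstruction is that $P_X$ could factor as a product of two linear integer polynomials, in which case the construction merely recovers a good rational approximation, not a quadratic one.

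The first step is to quantify how often this degeneration can happen. Here is where I expect the auxiliary side result advertised in the abstract — an upper bound for the number of linear combinations $\lambda P+\mu Q$ of two fixed coprime integer polynomials that possess a linear factor — to enter. Concretely, I would fix two ``basis'' quadratic polynomials $P,Q$ (for instance two consecutive minimal polynomials of convergents, suitably combined) that are small at $\zeta$, note that the Minkowski/Davenport–Schmidt output always lies in a two-dimensional sublattice essentially spanned by $P$ and $Q$, and invoke the side result to conclude that only $O(1)$, or at worst $o(X^{\text{something}})$, of the relevant combinations are reducible. Thus for a positive proportion (indeed, all but finitely many) of the scales $X$ along a suitable sequence $X\to\infty$, the polynomial $P_X$ produced is irreducible, hence furnishes a bona fide quadratic irrational $\alpha$.

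The second step is the routine but necessary ``root extraction'' estimate: from $|P_X(\zeta)|\ll X^{-2}$, $H(P_X)\asymp X$, and irreducibility, deduce that $P_X$ has a root $\alpha$ with $|\zeta-\alpha|\le c H(\alpha)^{-3}$, tracking the constant $c$ in terms of $\max\{1,\zeta^2\}$ exactly as Davenport–Schmidt do, so that effectivity is preserved. One must also check that infinitely many distinct $\alpha$ arise — this follows because $|\zeta-\alpha_X|\to 0$ while $\zeta$ is not quadratic, so the $\alpha_X$ cannot stabilize, and a pigeonhole/counting argument rules out a single $\alpha$ being hit for a positive-density set of $X$.

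The main obstacle, and the genuinely new ingredient, is controlling the reducible case: one needs the combinatorial bound on linear pencils of integer polynomials with a linear factor to be strong enough, and uniform enough in the coefficients of $P$ and $Q$, that it beats the number of lattice points the Minkowski argument supplies at scale $X$. A secondary subtlety is that the exceptional reducible combinations could in principle conspire to coincide with the \emph{only} small vectors at certain scales $X$; to handle this I would argue along a carefully chosen increasing sequence of $X$'s (e.g. governed by the continued fraction expansion of $\zeta$, or by the successive minima of the Davenport–Schmidt lattice) and show that the reducible locus, being lower-dimensional in an appropriate sense, cannot capture the first minimum for all large scales simultaneously. Everything else — the geometry of numbers, the derivative estimates, the bookkeeping of the constant $c(\zeta)$ — is classical and can be imported essentially verbatim from~\cite{davs}.
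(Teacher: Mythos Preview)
Your proposal has a genuine gap and also diverges substantially from the paper's argument.

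\textbf{The gap.} The crux of your plan is the claim that the Davenport--Schmidt output $P_X$ ``always lies in a two-dimensional sublattice essentially spanned by $P$ and $Q$'' for two fixed coprime integer polynomials, so that the auxiliary irreducibility bound can be applied to the pencil $\lambda P+\mu Q$. This is not justified and is in general false: the lattice of integer polynomials of degree $\le 2$ is three-dimensional, and there is no reason the first minimum at varying scales $X$ should remain in a single rank-two sublattice. Your suggested basis (``two consecutive minimal polynomials of convergents'') consists of \emph{linear} polynomials, whose span contains no quadratic polynomial at all. Furthermore, the paper's irreducibility side result (Theorem~\ref{irrpol}) has restrictive hypotheses --- one polynomial of degree $\le n-1$, the other of exact degree $n$ with vanishing constant term --- and bounds only combinations of the special shape $Q+pP$ or $P+pQ$ with $p$ prime, not arbitrary $aP+bQ$. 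So even granting a two-dimensional reduction, the tool does not fit the situation you describe. In short, the mechanism you propose for ruling out reducibility of $P_X$ is not in place.

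\textbf{What the paper actually does.} The paper's proof is completely different and does not touch the irreducibility theorem. It argues by contradiction: if the claim fails, then by the original Davenport--Schmidt result all the good approximations are rational, so there are infinitely many $p/q$ with $|\zeta-p/q|\ll q^{-3}$. A short base-$q$ expansion argument (in the style of~\cite[Theorem~1.12]{j3}) then shows that for the corresponding $X=q^2/(2c_1)$ no integer $x\le X$ can make both $\Vert x\zeta\Vert$ and $\Vert x\zeta^2\Vert$ smaller than $c'X^{-1/2}$. This means the hypothesis of Theorem~\ref{korola} holds with $n=2$ and $\lambda=1/2$, and that theorem directly produces infinitely many \emph{genuinely quadratic} $\alpha$ with $|\zeta-\alpha|\ll H(\alpha)^{-3}$, contradicting the assumption. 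The irreducibility pencil result is reserved for the cubic case (Theorem~\ref{basned}), where this transference route is not available.
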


It is worth noticing that the constant $c$ we can provide
for Theorem~\ref{bugpro}
will probably be larger than $\frac{160}{9}\max\{ 1,\zeta^{2}\}$ in \eqref{eq:dsc}, however the explicit computation is cumbersome
and we do not attempt to carry it out.
We continue to discuss Theorem~\ref{bugpro} in
Section~\ref{quadro} below 
and there we also provide more new results concerning
approximation by quadratic irrationals.
In Section~\ref{neures} we study approximation to real numbers 
by algebraic numbers of exact degree $n$, for arbitrary $n\geq 2$. 
We propose a problem related to the Wirsing problem
and solve it for $n=3$.
Moreover,
we illustrate the difference between approximation in bounded 
versus exact degree. Indeed, many classical results turn out
to be false when the degree is fixed.
Section~\ref{unifuni} is devoted to approximation to real numbers
by algebraic integers. 
Davenport and Schmidt~\cite{davsh} wrote a pioneering paper on this topic
in 1969, and more recent results can be found 
in~\cite{bugbuch,teu,teuallein} again. We will discuss these
contributions in Section~\ref{unifuni}. It is tempting to believe that
approximation by algebraic integers of 
degree $n+1$ is closely
related to approximation by algebraic numbers of degree $n$. However,  
Roy~\cite{roy,roy2} constructed counterexamples to an intuitive conjecture for cubic integers.
Related results can also be found in the book of Cassels~\cite{cassels}, and the more recent paper~\cite{bula} by
Bugeaud and Laurent, which investigates inhomogeneous approximation in a wide generality. 
Our new contribution to this topic is related to~\cite{bula} and yields a new characterization of Liouville numbers, that are
numbers for which we may choose an arbitrarily large negative exponent 
in the right hand side of \eqref{eq:gle1}. Proofs, unless very short, 
are carried out in Section~\ref{proofs}. For some 
proofs it is convenient to use the
concept of the parametric geometry of numbers introduced 
by Schmidt and Summerer~\cite{ss}.
Finally we will gather several open problems in Section~\ref{openp}.

We enclose some notation which will simplify the formulation of our results. For a ring $R$, we will denote $R_{\leq n}[T]$ the set of polynomials of degree at most $n$ with coefficients 
in $R$, and similarly define $R_{=n}[T]$ and $R_{\geq n}[T]$. 
The most important instances will be
$\mathbb{Z}_{\leq n}[T]$ and $\mathbb{Z}_{=n}[T]$. 
We denote by $\mathbb{A}_{\leq n}$ and $\mathbb{A}_{=n}$ the set of 
non-zero real
algebraic numbers of degree at most $n$ and equal to $n$, respectively. Similarly,
$\mathbb{A}_{\leq n}^{int}$ and $\mathbb{A}_{=n}^{int}$ denote the sets of non-zero real algebraic integers of degree
at most or exactly $n$, respectively.
We will write $A\ll_{.} B$ when $A\leq c(.)B$ for a constant $c$ that may depend on the subscript arguments, 
and $A\ll B$ when the constant $c$ is absolute. 
Moreover $A\asymp_{.} B$ and $A\asymp B$ will be short notation for 
$A\ll_{.} B\ll_{.} A$ and $A\ll B\ll A$, respectively.

\subsection{Classical and new exponents}  \label{bounded}
We will formulate most of our results in terms of classical exponents of Diophantine approximation and certain variations.
We now define all these exponents and discuss their basic properties.

Let
$w_{n}^{\ast}(\zeta)$ and $\widehat{w}_{n}^{\ast}(\zeta)$ respectively denote the
supremum of real numbers $w^{\ast}$ such that the system
\begin{equation}  \label{eq:wgleichse}
H(\alpha) \leq X, \qquad  0<\vert \zeta-\alpha\vert \leq H(\alpha)^{-1}X^{-w^{\ast}},  
\end{equation}
has a solution $\alpha\in \mathbb{A}_{\leq n}$
for arbitrarily large $X$, and all large $X$, respectively. 
Let $w_{=n}^{\ast}(\zeta)$ and $\widehat{w}_{=n}^{\ast}(\zeta)$ be
defined similarly with $\alpha\in \mathbb{A}_{=n}$ instead of $\alpha\in \mathbb{A}_{\leq n}$. We call the exponents without ''hat''
best approximation constants and the ones with ''hat'' uniform
exponents. Indeed, for $n=1$ this concept relates to the largest
possible exponents in \eqref{eq:gle1} and \eqref{eq:gle} for 
given $\zeta$, respectively. 
In this notation, Wirsing's problem asks 
whether $w_{n}^{\ast}(\zeta)\geq n$ holds for any transcendental real number, and 
Theorem~\ref{bugpro} implies $w_{=2}^{\ast}(\zeta)\geq 2$.
We obviously have the relations
\begin{equation}   \label{eq:obviuos}
0\leq w_{=n}^{\ast}(\zeta)\leq w_{n}^{\ast}(\zeta), \qquad 0\leq \widehat{w}_{=n}^{\ast}(\zeta)\leq \widehat{w}_{n}^{\ast}(\zeta).
\end{equation}
The classical exponents moreover satisfy 
\begin{equation} \label{eq:wepper}
w_{1}^{\ast}(\zeta)\leq w_{2}^{\ast}(\zeta)\leq \cdots, 
\qquad 1=\widehat{w}_{1}^{\ast}(\zeta)\leq \widehat{w}_{2}^{\ast}(\zeta)\leq \cdots,
\end{equation}
where the only non-obvious identity $1=\widehat{w}_{1}^{\ast}(\zeta)$ 
due to Khintchine~\cite{kinn} shows that the exponent of $X$ in
\eqref{eq:gle} cannot be improved for
any irrational real $\zeta$.
In contrast, the analogous claims to \eqref{eq:wepper} for exact degree
are in general false, as we will show in Theorem~\ref{ce} below.

Define $w_{n}(\zeta)$ and $\widehat{w}_{n}(\zeta)$ respectively as the supremum of real numbers $w$ for which the system
\begin{equation} \label{eq:sukmini2}
H(P)\leq X, \qquad 0<\vert P(\zeta)\vert \leq X^{-w}
\end{equation}
has a solution $P\in\mathbb{Z}_{\leq n}[T]$, for
arbitrarily large $X$ and all large $X$, respectively.
Similarly, let $w_{=n}(\zeta)$ and $\widehat{w}_{=n}(\zeta)$ be
the supremum of $w\in{\mathbb{R}}$ such that \eqref{eq:sukmini2}
has an irreducible solution $P\in{\mathbb{Z}_{=n}[T]}$
for arbitrarily large values of $X$, and all large $X$, respectively.
The requirement for the polynomials in the definitions
of $w_{=n}(\zeta)$ and $\widehat{w}_{=n}(\zeta)$ to be irreducible is natural, otherwise
we would have trivial equality with the corresponding classic exponents. Indeed, for any
$Q(T)\in \mathbb{Z}_{=d}[T]$ with
$d<n$, the polynomial $\tilde{Q}(T)=T^{n-d}Q(T)$ has degree precisely $n$, the same
height $H(\tilde{Q})=H(Q)$ and satisfies $\tilde{Q}(\zeta)\asymp_{n,\zeta} Q(\zeta)$.

We again have the obvious relations
\begin{equation}  \label{eq:obvious}  
0\leq w_{=n}(\zeta)\leq w_{n}(\zeta), \qquad 0\leq \widehat{w}_{=n}(\zeta)\leq \widehat{w}_{n}(\zeta),   
\end{equation}
and the classical exponents are non-decreasing
\begin{equation} \label{eq:fritz}
w_{1}(\zeta)\leq w_{2}(\zeta)\leq \cdots, \qquad 1=\widehat{w}_{1}(\zeta)\leq \widehat{w}_{2}(\zeta)\leq \cdots.
\end{equation}
The identity $\widehat{w}_{1}(\zeta)=1$ is due
to Khintchine~\cite{kinn} again. 
On the other hand, the estimates in \eqref{eq:fritz} are again in general false for 
the corresponding exponents of exact degree.
Furthermore any Sturmian continued fraction defined as in~\cite{buglaur}
also provides a counterexample for certain indices. Indeed, for these 
numbers we have both $w_{=2}(\zeta)>w_{=3}(\zeta)$ 
and $\widehat{w}_{=2}(\zeta)>\widehat{w}_{=3}(\zeta)$, as follows from the results in~\cite{ichstw}.
See also~\cite[Theorem~2.1 and Theorem~2.2]{ichlondon}. In contrast to the open Wirsing problem,
for every transcendental real number $\zeta$, a multi-dimensional
variant of Dirichlet's Theorem states
\begin{equation}  \label{eq:dochmachen}
w_{n}(\zeta) \geq \widehat{w}_{n}(\zeta)\geq n.
\end{equation}
In fact, for any $X\geq 1$ 
we find $P\in \mathbb{Z}_{\leq n}[T]$ of height 
at most $X$ such that
$\vert P(\zeta)\vert\leq cX^{-n}$, for an explicit constant $c=c(\zeta)$.
The essential problem in the Wirsing conjecture is that a lower
bound for $w_{=n}^{\ast}(\zeta)$ also requires that the
derivative $P^{\prime}(\zeta)$ of the polynomials $P$ 
inducing \eqref{eq:dochmachen} at $\zeta$ are not too small
in absolute value. This is no longer clear when $n\geq 2$.
Again \eqref{eq:dochmachen} turns out to be false for 
the corresponding exponents of exact degree, 
even $\widehat{w}_{=n}(\zeta)=0$
does occur for certain $\zeta$ when $n\geq 2$. 
We will deal with the problem if $w_{=n}(\zeta)\geq n$ holds 
for any $\zeta$.
Finally we point out that ~\cite[Lemma A.8]{bugbuch} links
the exponents in the form
\begin{align} 
w_{n}^{\ast}(\zeta)&\leq w_{n}(\zeta)\leq w_{n}^{\ast}(\zeta)+n-1, 
\qquad\quad \widehat{w}_{n}^{\ast}(\zeta)\leq \widehat{w}_{n}(\zeta)\leq \widehat{w}_{n}^{\ast}(\zeta)+n-1,   \label{eq:bbb}  \\
w_{=n}^{\ast}(\zeta)&\leq w_{=n}(\zeta)\leq w_{=n}^{\ast}(\zeta)+n-1, 
\qquad \widehat{w}_{=n}^{\ast}(\zeta)\leq \widehat{w}_{=n}(\zeta)\leq \widehat{w}_{=n}^{\ast}(\zeta)+n-1. \label{eq:allesfa}
\end{align}
The left inequalities are easy to infer. They use the elementary 
fact that if $P$ is the minimal polynomial of $\alpha$, then
$\vert P(\zeta)\vert=\vert P(\alpha)-P(\zeta)\vert \leq \vert P^{\prime}(z)\vert\cdot \vert \alpha-\zeta\vert$ for some $z$ between $\alpha$ 
and $\zeta$,
but on the other hand $\vert P^{\prime}(z)\vert\ll_{n,\zeta} H(P)$
when $z$ is close to $\zeta$.
Hence $\vert P(\zeta)\vert
\ll_{n,\zeta} H(P)\vert \alpha-\zeta\vert$ and the claims follow. 
The right inequalities are more difficult to show.

\section{Approximation by quadratic irrational numbers}  \label{quadro}

We recall that a transcendental real number is called $U$-number in Mahler's classification
of real numbers when $w_{n}(\zeta)=\infty$ for
some $n\geq 1$. More precisely, $\zeta$
is called $U_{m}$-number when $m$ is the smallest index for which $w_{m}(\zeta)=w_{=m}(\zeta)=\infty$. The
$U_{1}$-numbers are also called Liouville numbers. 
The set of $U$-numbers is the disjoint union 
of the sets of $U_{m}$-numbers over $m\geq 1$.

\begin{theorem} \label{grad2}
Let $\zeta$ be a real number which satisfies $\widehat{w}_{2}(\zeta)>2$. Then we have
\begin{equation} \label{eq:numerouno}
w_{=2}(\zeta)= w_{2}(\zeta), \qquad \widehat{w}_{=2}(\zeta)=\widehat{w}_{2}(\zeta),
\end{equation}
and 
\begin{equation} \label{eq:numerodos}
w_{=2}^{\ast}(\zeta)= w_{2}^{\ast}(\zeta).
\end{equation}
If additionally $\widehat{w}_{2}^{\ast}(\zeta)\geq 2$
holds, we have $\widehat{w}_{=2}^{\ast}(\zeta)=\widehat{w}_{2}^{\ast}(\zeta)$ as well.
Moreover, $\zeta$ is not a $U$-number.
\end{theorem}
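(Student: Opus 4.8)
Since the ``$\leq$'' halves of \eqref{eq:numerouno} and \eqref{eq:numerodos} are contained in \eqref{eq:obvious} and \eqref{eq:obviuos}, it suffices to prove the reverse inequalities $w_{=2}(\zeta)\geq w_2(\zeta)$, $\widehat{w}_{=2}(\zeta)\geq \widehat{w}_2(\zeta)$, $w_{=2}^{\ast}(\zeta)\geq w_2^{\ast}(\zeta)$ and, under the additional hypothesis, $\widehat{w}_{=2}^{\ast}(\zeta)\geq \widehat{w}_2^{\ast}(\zeta)$; that $\zeta$ is not a $U$-number will be obtained along the way. All of these rest on one construction. Write $w$ for the classical exponent to be matched and fix a small $\epsilon>0$. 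The plan is to take a polynomial $P\in\mathbb{Z}_{\leq 2}[T]$ with $H(P)\leq X$ and $|P(\zeta)|$ close to $X^{-w}$, to embed it in a \emph{pencil} through $P$ and a suitable second polynomial, and to select from that pencil an \emph{irreducible} member of degree exactly $2$ of essentially the same quality. The thing this construction needs as input is: for the relevant heights $X$ — arbitrarily large $X$ for the best-approximation exponents, all large $X$ for the uniform ones — there are two \emph{coprime} polynomials $P_1,P_2\in\mathbb{Z}_{\leq 2}[T]$, at least one of degree $2$, with $H(P_i)\ll_\zeta X$ and $|P_i(\zeta)|\ll_\zeta X^{-w+\epsilon}$.

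Producing this pair is the core of the matter and is exactly where the strict inequality $\widehat{w}_2(\zeta)>2$ — rather than $\widehat{w}_2(\zeta)\geq 2$, which is automatic for transcendental $\zeta$ by \eqref{eq:dochmachen} — is indispensable. I would argue via the parametric geometry of numbers attached to the convex bodies
\begin{equation*}
\mathcal{C}_\zeta(Q)=\bigl\{(x_0,x_1,x_2)\in\mathbb{R}^{3}:\ |x_0+x_1\zeta+x_2\zeta^{2}|\leq Q^{-2},\ |x_1|\leq Q,\ |x_2|\leq Q\bigr\}.
\end{equation*}
The hypothesis $\widehat{w}_2(\zeta)>2$ amounts to $\lambda_1(Q)\ll_\zeta Q^{-\gamma}$ holding for all large $Q$ with some $\gamma=\gamma(\zeta)>0$, and combining this with Minkowski's second theorem, with the Schmidt--Summerer description of the functions $q\mapsto\log\lambda_i(e^{q})$ (piecewise linear, local slopes in a fixed two-element set, summing to $0$, so that generically exactly one of $\lambda_1,\lambda_2,\lambda_3$ decreases), and with the elementary remark that a best polynomial is fixed over an interval of scales on which $\lambda_1$ is $V$-shaped, one finds that at the crossover scales — those at which a new best polynomial overtakes the previous one — one has $\lambda_1(Q),\lambda_2(Q)\ll_\zeta Q^{-\gamma'}$; a short computation shows that, since $\gamma>0$ holds at every scale, consecutive crossover scales have logarithms in a bounded ratio, so that together with the neighbourhoods on which the crossover polynomials remain good they meet the ``all large $X$'' demand for the uniform exponents. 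The two overtaking polynomials furnish the pair; should one of them have degree $\leq 1$, one replaces the pair using $\widehat{w}_1(\zeta)=1$ from \eqref{eq:fritz} (a degree $\leq 1$ polynomial that small at $\zeta$ at every scale would force $\widehat{w}_1>1$), and then divides out any common factor to make $P_1,P_2$ coprime. Turning this sketch into estimates with all losses confined to a factor $X^{\epsilon}$ is where I expect the real work to lie; this is the main obstacle.

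Given coprime $P_1,P_2\in\mathbb{Z}_{\leq 2}[T]$ with $\deg P_1=2$, $H(P_i)\ll_\zeta X$ and $|P_i(\zeta)|\ll_\zeta X^{-w+\epsilon}$, set $R_{a,b}=aP_1+bP_2$ for $a,b\in\mathbb{Z}$ with $\max(|a|,|b|)\leq Y$, where $Y=X^{\delta}$ for a small fixed $\delta>0$. Then $H(R_{a,b})\ll_\zeta YX$ and $|R_{a,b}(\zeta)|\ll_\zeta YX^{-w+\epsilon}$. The polynomial $R_{a,b}$ has degree exactly $2$ unless $(a,b)$ lies on the single line $a\,c_1+b\,c_2=0$, with $c_1,c_2$ the leading coefficients of $P_1,P_2$, which excludes $O(Y)$ pairs. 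By the auxiliary upper bound of this paper for the number of linear combinations of two relatively prime integer polynomials that possess a linear factor, at most $o(Y^{2})$ of the pairs $(a,b)$ with $\max(|a|,|b|)\leq Y$ give a reducible $R_{a,b}$. Hence for $X$ large there is a pair with $R_{a,b}$ irreducible of degree exactly $2$, of height $\ll_\zeta X^{1+\delta}$ and value $\ll_\zeta X^{-w+\epsilon+\delta}$ at $\zeta$; measured against its own height this realises an exponent at least $w-\epsilon-O(\delta)$. Letting $\delta\to0$ and $\epsilon\to0$ yields $w_{=2}(\zeta)\geq w_2(\zeta)$ and $\widehat{w}_{=2}(\zeta)\geq\widehat{w}_2(\zeta)$, hence \eqref{eq:numerouno} together with \eqref{eq:obvious}.

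For \eqref{eq:numerodos} I would deduce $w_{=2}^{\ast}(\zeta)\geq w_2^{\ast}(\zeta)$ from \eqref{eq:numerouno} and the chains \eqref{eq:bbb}, \eqref{eq:allesfa} linking the starred and unstarred exponents; equivalently, re-run the pencil construction while following the root $\alpha$ of $R_{a,b}$ nearest $\zeta$, using $|\zeta-\alpha|\asymp_\zeta|R_{a,b}(\zeta)|/|R_{a,b}'(\zeta)|$ and $|R_{a,b}'(\zeta)|\ll_\zeta H(R_{a,b})$. For the uniform starred exponent the extra hypothesis $\widehat{w}_2^{\ast}(\zeta)\geq 2$ is precisely what prevents $|R_{a,b}'(\zeta)|$ from being abnormally small at every scale, and then the same argument gives $\widehat{w}_{=2}^{\ast}(\zeta)=\widehat{w}_2^{\ast}(\zeta)$. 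Finally, $\zeta$ is not a $U$-number: since $\widehat{w}_2(\zeta)>2$ there is, at every large height $X$, a polynomial $P$ of degree $\leq 2$ with $H(P)\leq X$ and $|P(\zeta)|\leq X^{-2-\gamma}$, and pairing such a $P$ — with its height taken to be a suitable power of $H(R)$ so that the contribution of $P$ is negligible — against an arbitrary integer polynomial $R$ coprime to it, through a standard estimate bounding the nonzero resultant $\mathrm{Res}(P,R)$ in terms of $H(P),H(R),|P(\zeta)|,|R(\zeta)|$, forces $|R(\zeta)|\gg_{n,\zeta}H(R)^{-C(n,\gamma)}$; for $\deg R\leq 3$ this is immediate, and the remaining degrees are handled by feeding the uniformly good quadratic irrationals supplied by \eqref{eq:numerouno} into the same resultant/Liouville estimate (evaluating those quadratics at the roots of a hypothetical arbitrarily good $R$). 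Thus $w_n(\zeta)<\infty$ for every $n$, so $\zeta$ is not a $U$-number.
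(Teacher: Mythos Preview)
Your approach is far more elaborate than what is needed, and several of its steps do not go through as stated. The paper's proof rests entirely on one short observation you have missed: the hypothesis $\widehat{w}_{2}(\zeta)>2$ forces $w_{1}(\zeta)<2$ (this is \cite[Theorem~5.1]{j3}). Once that is available, the argument for \eqref{eq:numerouno} is immediate. Every best approximation polynomial of degree $\leq 2$ satisfies $|P(\zeta)|\ll_{\zeta}H(P)^{-2}$ by Dirichlet, so if such a $P$ were linear, or a product of two linear factors, the bound $w_{1}(\zeta)<2$ would be violated (for products one uses \eqref{eq:hoehen}). Hence every best approximation polynomial of large height is irreducible quadratic, and \eqref{eq:numerouno} follows. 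No pencil, no parametric geometry of numbers, no Theorem~\ref{irrpol} are needed. For \eqref{eq:numerodos} the paper combines the same implication $w_{1}(\zeta)<2$ with Theorem~\ref{bugpro}: the latter guarantees the approximating $\alpha_{i}$ satisfy $|\zeta-\alpha_{i}|\ll H(\alpha_{i})^{-3}$, whence $|P_{i}(\zeta)|\ll H(P_{i})^{-2}$ for their minimal polynomials, and again linearity is excluded. The $U$-number claim is likewise settled by citation: \cite[Th\'eor\`eme~5.3]{adambug} reduces to $m\leq 2$, and $m=1,2$ are ruled out by \cite[Theorem~5.1]{j3} and \cite[Corollary~2.5]{bschlei}.

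Concretely, two steps of your proposal fail. First, your deduction of $w_{=2}^{\ast}(\zeta)\geq w_{2}^{\ast}(\zeta)$ ``from \eqref{eq:numerouno} and the chains \eqref{eq:bbb}, \eqref{eq:allesfa}'' is invalid: those chains only give $w_{=2}^{\ast}(\zeta)\geq w_{=2}(\zeta)-1=w_{2}(\zeta)-1\geq w_{2}^{\ast}(\zeta)-1$, which loses a full unit. Second, the ``auxiliary upper bound of this paper'' you invoke is Theorem~\ref{irrpol}, whose hypotheses ($P\in\mathbb{Z}_{\leq n-1}[T]$, $Q\in\mathbb{Z}_{=n}[T]$, $Q(0)=0$) are not met by a generic pair of coprime quadratics; it does not apply to your pencil as stated. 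Your resultant argument for the $U$-number claim is plausible in outline but also unnecessary given the cited results.
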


We will see in Theorem~\ref{ce} below
that all identities in Theorem~\ref{grad2} are in general false when 
we drop the assumption $\widehat{w}_{2}(\zeta)>2$.
On the other hand, as indicated in Section~\ref{bounded}, there 
are plenty of numbers, 
including Sturmian continued fractions~\cite{buglaur}, that satisfy the hypothesis $\widehat{w}_{2}(\zeta)>2$.   
The last claim of Theorem~\ref{grad2} 
can be regarded as an extension of~\cite[Th\'{e}or\`{e}me~5.3]{adambug} in the special case $n=2$, where the possibility
that $\zeta$ is a $U_{m}$-number for $m\leq n=2$ was not ruled out.
In fact
the semi-effective exponential upper bounds for the growth of the
sequence $(w_{n}(\zeta))_{n\geq 1}$
from~\cite[Th\'{e}or\`{e}me~4.2]{adambug} 
apply to any $\zeta$ which satisfies $\widehat{w}_{2}(\zeta)>2$.
See also~\cite[Corollary~4.6]{ichspec} for a stronger upper
bound for the exponent $w_{3}(\zeta)$ of the form
$w_{3}(\zeta)<15/(\widehat{w}_{2}(\zeta)-2)^{2}$ 
as soon as $\widehat{w}_{2}(\zeta)>2$.
We will discuss generalizations of these phenomena
in Problem~\ref{p3e} in Section~\ref{openp} below.

With Theorem~\ref{grad2} we can simultaneously 
refine $w_{=2}^{\ast}(\zeta)\geq 2$ from Theorem~\ref{bugpro} and
\begin{equation} \label{eq:moshid}
w_{2}^{\ast}(\zeta)\geq \widehat{w}_{2}(\zeta)(\widehat{w}_{2}(\zeta)-1)
\end{equation}
recently discovered by Moshchevitin~\cite[Theorem~2]{nimo}.

\begin{theorem}
For any real number $\zeta$ not rational or quadratic irrational, we have
\begin{equation} \label{eq:nichtalt}
w_{=2}^{\ast}(\zeta)\geq \widehat{w}_{2}(\zeta)(\widehat{w}_{2}(\zeta)-1)\geq 2.
\end{equation}
\end{theorem}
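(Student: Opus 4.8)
The plan is to derive \eqref{eq:nichtalt} by combining three facts already at our disposal: the unconditional lower bound $w_{=2}^{\ast}(\zeta)\geq 2$ furnished by Theorem~\ref{bugpro}, the identity $w_{=2}^{\ast}(\zeta)=w_{2}^{\ast}(\zeta)$ of Theorem~\ref{grad2} (valid whenever $\widehat{w}_{2}(\zeta)>2$), and Moshchevitin's inequality \eqref{eq:moshid}. The second inequality in \eqref{eq:nichtalt} is elementary: the multi-dimensional Dirichlet box principle gives $\widehat{w}_{2}(\zeta)\geq 2$ for every real $\zeta$ that is not rational or quadratic irrational (for transcendental $\zeta$ this is contained in \eqref{eq:dochmachen}, and for $\zeta$ algebraic of degree at least $3$ it follows from the pigeonhole principle applied to $a_{0}+a_{1}\zeta+a_{2}\zeta^{2}$ with $|a_{i}|\leq X$), and since $t\mapsto t(t-1)$ is increasing on $[2,\infty)$ and equals $2$ at $t=2$, we obtain $\widehat{w}_{2}(\zeta)(\widehat{w}_{2}(\zeta)-1)\geq 2$.

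For the first inequality I would split into two cases according to the value of $\widehat{w}_{2}(\zeta)$. If $\widehat{w}_{2}(\zeta)=2$, then $\widehat{w}_{2}(\zeta)(\widehat{w}_{2}(\zeta)-1)=2$ and the assertion reduces to $w_{=2}^{\ast}(\zeta)\geq 2$; this is exactly Theorem~\ref{bugpro}, once one observes that a solution $\alpha\in\mathbb{A}_{=2}$ of \eqref{eq:dsc} is, after absorbing the constant $c$, a solution of \eqref{eq:wgleichse} with exponent $w^{\ast}=2$ and $X=H(\alpha)$, for arbitrarily large $H(\alpha)$. If instead $\widehat{w}_{2}(\zeta)>2$, then the hypothesis of Theorem~\ref{grad2} is met, and its conclusion \eqref{eq:numerodos} gives $w_{=2}^{\ast}(\zeta)=w_{2}^{\ast}(\zeta)$; combining this with \eqref{eq:moshid} yields $w_{=2}^{\ast}(\zeta)=w_{2}^{\ast}(\zeta)\geq\widehat{w}_{2}(\zeta)(\widehat{w}_{2}(\zeta)-1)$. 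Since $\widehat{w}_{2}(\zeta)\geq 2$ always holds, the two cases are exhaustive and \eqref{eq:nichtalt} follows.

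I do not anticipate a genuine obstacle, since every nontrivial ingredient is a result proved elsewhere in the paper or already in the literature; the only points that require a little care are the passage from the formulation of Theorem~\ref{bugpro} in terms of the constant $c$ in \eqref{eq:dsc} to the exponent inequality $w_{=2}^{\ast}(\zeta)\geq 2$, and the observation that $\widehat{w}_{2}(\zeta)>2$ is precisely the hypothesis under which \eqref{eq:numerodos} is available. A self-contained proof would, of course, have to shoulder the substantial work hidden inside Theorems~\ref{bugpro} and~\ref{grad2}.
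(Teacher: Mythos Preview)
Your proposal is correct and follows essentially the same approach as the paper: split into the cases $\widehat{w}_{2}(\zeta)=2$ (where Theorem~\ref{bugpro} gives $w_{=2}^{\ast}(\zeta)\geq 2$) and $\widehat{w}_{2}(\zeta)>2$ (where \eqref{eq:numerodos} and \eqref{eq:moshid} combine), with the second inequality in \eqref{eq:nichtalt} coming from $\widehat{w}_{2}(\zeta)\geq 2$. Your write-up is in fact slightly more careful than the paper's, since you address the algebraic-but-not-quadratic case for $\widehat{w}_{2}(\zeta)\geq 2$ and the passage from the constant in \eqref{eq:dsc} to the exponent statement.
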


\begin{proof}
When $\widehat{w}_{2}(\zeta)>2$,
we deduce \eqref{eq:nichtalt} from \eqref{eq:numerodos} and \eqref{eq:moshid}.
On the other hand, in case of $\widehat{w}_{2}(\zeta)=2$, the claim becomes $w_{=2}^{\ast}(\zeta)\geq 2$
and is implied by Theorem~\ref{bugpro}.
\end{proof}

We conclude this section with remarks on Theorem~\ref{bugpro}. 
As indicated in Section~\ref{out}, 
a variation of Theorem~\ref{bugpro} concerning approximation by cubic algebraic integers turns out to be false.
Indeed, a non-empty subclass of Roy's extremal numbers introduced
in~\cite{roy} are not approximable
by cubic algebraic integers to the expected order three. More precisely, it was shown in~\cite{roy2} that for some extremal numbers $\zeta$ 
and any $\alpha\in\mathbb{A}_{\leq 3}^{int}$ we have 
\begin{equation} \label{eq:damienr}
\vert \zeta-\alpha\vert  \gg_{\zeta} H(\alpha)^{-\theta}, \qquad \theta=\frac{\sqrt{5}+3}{2}= 2.6180\ldots <3.
\end{equation}
We also refer to Moshchevitin~\cite{mosh} and Roy~\cite{royneu} for a negative answer to a somehow related two-dimensional
problem introduced by W.M. Schmidt~\cite{wschmidt} concerning 
small evaluations of linear forms with sign restrictions.


\section{Approximation in higher prescribed degree} \label{neures}

\subsection{The problem $w_{=n}(\zeta)\geq n$} \label{natu}

In the case of general $n\geq 2$,
we first want to state a
basic observation relating classical 
and new best approximation exponents.

\begin{lemma} \label{mouz}
Let $n\geq 1$ be an integer and $\zeta$ be any real number. 
For the best approximation exponents we have
the identities
\begin{equation}  \label{eq:drar}
w_{n}(\zeta)= \max \{w_{=1}(\zeta),\ldots,w_{=n}(\zeta)\}, \qquad
w_{n}^{\ast}(\zeta)= \max\{ w_{=1}^{\ast}(\zeta),\ldots,w_{=n}^{\ast}(\zeta)\}.
\end{equation}
\end{lemma}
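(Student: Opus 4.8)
\textbf{Proof plan.} The inequalities ``$\geq$'' in \eqref{eq:drar} are immediate from the definitions: for $k\leq n$ every irreducible $P\in\mathbb{Z}_{=k}[T]$ belongs to $\mathbb{Z}_{\leq n}[T]$, and every $\alpha\in\mathbb{A}_{=k}$ belongs to $\mathbb{A}_{\leq n}$, so $w_{=k}(\zeta)\leq w_{n}(\zeta)$ and $w_{=k}^{\ast}(\zeta)\leq w_{n}^{\ast}(\zeta)$; taking the maximum over $k\leq n$ gives one half of each identity. For the reverse inequalities I would first record that each of the four exponents is a $\limsup$; e.g.\ $w_{n}^{\ast}(\zeta)=\limsup\,\bigl(-\log(\lvert\zeta-\alpha\rvert H(\alpha))/\log H(\alpha)\bigr)$ over $\alpha\in\mathbb{A}_{\leq n}$ with $H(\alpha)\to\infty$, and likewise for the others, the point being that a fixed $\alpha$ (resp.\ polynomial) cannot meet the defining inequality for arbitrarily large $X$, so the heights necessarily tend to infinity. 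Granting this, the $\ast$-case is easy: pick $\alpha_{j}\in\mathbb{A}_{\leq n}$ with $H(\alpha_{j})\to\infty$ realizing $w_{n}^{\ast}(\zeta)$ up to $\varepsilon$; each $\alpha_{j}$ has an exact degree $d_{j}\in\{1,\dots,n\}$, so by the pigeonhole principle one value $d\leq n$ occurs infinitely often, and along that subsequence $w_{=d}^{\ast}(\zeta)\geq w_{n}^{\ast}(\zeta)$. This proves $\max_{k\leq n}w_{=k}^{\ast}(\zeta)\geq w_{n}^{\ast}(\zeta)$ and hence \eqref{eq:drar} for the $\ast$-exponents.

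The inequality $w_{n}(\zeta)\leq M:=\max_{k\leq n}w_{=k}(\zeta)$ is the substantive part, the difficulty being that a polynomial of degree at most $n$ need not be irreducible. Suppose for contradiction $w_{n}(\zeta)>M$; then $M<\infty$ and one may fix a finite $w$ with $M<w<w_{n}(\zeta)$. There are infinitely many primitive $P_{j}\in\mathbb{Z}_{\leq n}[T]$ with $H(P_{j})\to\infty$ and $\lvert P_{j}(\zeta)\rvert\leq H(P_{j})^{-w}$ (passing to the primitive part only improves the approximation, and one checks that the heights still tend to infinity). Write $P_{j}=\pm\prod_{i}P_{i}^{a_{i}}$ with the $P_{i}$ distinct irreducible primitive polynomials of degree $d_{i}\geq1$, so $\sum_{i}a_{i}d_{i}=\deg P_{j}\leq n$ and, since $P_{j}(\zeta)\neq0$, no $P_{i}$ vanishes at $\zeta$. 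By the Gelfond--Mahler inequality $\prod_{i}H(P_{i})^{a_{i}}\asymp_{n}H(P_{j})$, while $\lvert P_{j}(\zeta)\rvert=\prod_{i}\lvert P_{i}(\zeta)\rvert^{a_{i}}$; taking logarithms, the exponent $e(P_{j}):=-\log\lvert P_{j}(\zeta)\rvert/\log H(P_{j})$ is, up to an additive $O_{n,\zeta}(1/\log H(P_{j}))$, the average of the exponents $e(P_{i})$ weighted by $a_{i}\log H(P_{i})$. Now fix a threshold $H_{0}=H_{0}(j)\to\infty$ growing slowly, say $\log H_{0}(j)=(\log H(P_{j}))^{1/3}$, and split the $P_{i}$ according to whether $H(P_{i})\leq H_{0}(j)$ or not. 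Because $w_{=d}(\zeta)\leq M<w$ for every $d\leq n$, all but finitely many irreducible $Q\in\mathbb{Z}_{\leq n}[T]$ satisfy $e(Q)<(M+w)/2$, and the finitely many exceptions still have $\lvert Q(\zeta)\rvert$ bounded below, so $\lvert Q(\zeta)\rvert\gg_{n,\zeta}H(Q)^{-(M+w)/2}$ for all such $Q$. This bound shows that the factors $P_{i}$ with $H(P_{i})\leq H_{0}(j)$ contribute only $O_{n,\zeta}\bigl((\log H_{0}(j))^{2}\bigr)=o(\log H(P_{j}))$ to $-\log\lvert P_{j}(\zeta)\rvert$, so $e(P_{j})$ is controlled by the factors of height exceeding $H_{0}(j)$; there is at least one such factor once $H(P_{j})$ is large, and each of them has $e(P_{i})<(M+w)/2$ because its height is large. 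Hence $e(P_{j})\leq(M+w)/2+o(1)<w$ for $j$ large, contradicting $\lvert P_{j}(\zeta)\rvert\leq H(P_{j})^{-w}$.

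I expect the delicate point to be precisely this last step: one cannot simply replace $P_{j}$ by one of its irreducible factors, since a factor of bounded height carries no information about the exponents $w_{=d}(\zeta)$, which are governed by heights tending to infinity. The crux is to show that the approximation quality of $P_{j}$ is already witnessed by an irreducible factor of unbounded (in fact suitably large) height, and this forces one to quantify the negligible role of the bounded-height factors, with the Gelfond--Mahler bound on the height of a product of polynomials as the main tool. An alternative route is induction on $n$: if $P_{j}$ is not itself irreducible of degree $n$, one locates a good irreducible factor of degree at most $n-1$; if its height stays bounded one divides it out of $P_{j}$ and applies the inductive hypothesis to the complementary factor, whose degree has strictly decreased.
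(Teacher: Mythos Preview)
Your argument is correct. The $\geq$ direction and the $\ast$-case match the paper's proof verbatim (pigeonhole on the exact degree). For the polynomial identity, the paper simply invokes Wirsing's Hilfssatz~4, which states that the polynomials realizing $w_{n}(\zeta)$ may be taken irreducible; once this is granted, the same pigeonhole finishes. What you do instead is essentially re-prove Wirsing's lemma inline via factorization and the Gelfond--Mahler height inequality. So the two approaches are the same at heart; yours is self-contained, the paper's is a one-line citation.

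One remark: your height-splitting with the threshold $H_{0}(j)$ is unnecessary. Having established the uniform bound $\lvert Q(\zeta)\rvert\gg_{n,\zeta} H(Q)^{-(M+w)/2}$ for \emph{all} irreducible $Q\in\mathbb{Z}_{\leq n}[T]$ (the finitely many exceptions being absorbed into the constant), you can apply it to every factor $P_{i}$ at once: since $\sum_{i}a_{i}\leq n$ and $\prod_{i}H(P_{i})^{a_{i}}\asymp_{n}H(P_{j})$, you get directly
\[
\lvert P_{j}(\zeta)\rvert=\prod_{i}\lvert P_{i}(\zeta)\rvert^{a_{i}}\gg_{n,\zeta}\prod_{i}H(P_{i})^{-a_{i}(M+w)/2}\gg_{n,\zeta}H(P_{j})^{-(M+w)/2},
\]
contradicting $\lvert P_{j}(\zeta)\rvert\leq H(P_{j})^{-w}$ for large $H(P_{j})$. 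No separation into small and large factors is needed.
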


\begin{proof}
The estimates
\[
w_{n}(\zeta)\geq \max \{w_{=1}(\zeta),\ldots,w_{=n}(\zeta)\}, \qquad
w_{n}^{\ast}(\zeta)\geq \max\{ w_{=1}^{\ast}(\zeta),\ldots,w_{=n}^{\ast}(\zeta)\}
\]
are an obvious consequence of \eqref{eq:obvious}, \eqref{eq:fritz}
and  \eqref{eq:obviuos}, \eqref{eq:wepper}, respectively.
The reverse right inequality follows from pigeon hole principle, since clearly infinitely many $\alpha\in\mathbb{A}_{\leq n}$ 
from the definition of $w_{n}^{\ast}(\zeta)$ must have the same degree. The reverse left estimate
follows similarly, when we also take into account that
the polynomials in the definition of $w_{n}(\zeta)$ 
can be chosen irreducible, 
as noticed by Wirsing~\cite[Hilfssatz~4]{wirsing}. 
\end{proof}

Note that the argument of Lemma~\ref{mouz} no longer applies 
to the uniform 
exponents $\widehat{w}_{n}(\zeta), \widehat{w}_{n}^{\ast}(\zeta)$. 
Indeed, the uniform identities analogous to \eqref{eq:drar} 
turn out to be false in general when $n\geq 2$, as we will see
in Theorem~\ref{ce} below.

We discuss the 
problem, motivated by the Dirichlet Theorem \eqref{eq:dochmachen},
if we can fix the degree of infinitely many 
involved polynomials to be exactly
$n$ and still obtain the lower bound $n$. 
In other words we want to know if $w_{=n}(\zeta)\geq n$ holds for 
any integer $n\geq 1$ and
any transcendental real $\zeta$. 
From Lemma~\ref{mouz} and \eqref{eq:dochmachen}
we only know that $w_{=k}(\zeta)\geq n$ for some $1\leq k\leq n$.
The problem seems to be 
reasonably easier than asking for $w_{=n}^{\ast}(\zeta)\geq n$
related to the Wirsing problem as no information on
derivatives of the polynomials is required. 
For $n=2$, Theorem~\ref{bugpro}
and \eqref{eq:allesfa} settles the claim. 
For $n=3$, we can still show the answer
is affirmative.
For $n\geq 4$, we can no longer provide a definite answer.
However, we establish a sufficient criterion concerning the
irreducibility of certain integer polynomials.

\begin{theorem} \label{basned}
Let $\zeta$ be a transcendental real number. We have 
\begin{equation} \label{eq:toeroe1}
w_{=3}(\zeta)\geq \widehat{w}_{3}(\zeta)\geq 3.
\end{equation}
For $n\geq 4$ an integer,
suppose the following claim holds. 
For any $\epsilon>0$ and a constant $c=c(n,\epsilon)$,
and any pair of coprime
polynomials $P,Q\in\mathbb{Z}_{\leq n}$ 
there exist non-zero integers $a,b$ with 
$\max\{\vert a\vert, \vert b\vert\}\leq c\cdot\max\{ H(P),H(Q)\}^{\epsilon}$,
such that $aP+bQ$ is irreducible. Then we have
\begin{equation} \label{eq:toeroe}
w_{=n}(\zeta)\geq \widehat{w}_{n}(\zeta)\geq n.
\end{equation}
\end{theorem}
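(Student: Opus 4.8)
The plan is to deduce both parts of Theorem~\ref{basned} from the Dirichlet-type bound \eqref{eq:dochmachen}, namely $\widehat{w}_{n}(\zeta)\geq n$, by upgrading the polynomials $P\in\mathbb{Z}_{\leq n}[T]$ that witness this bound to \emph{irreducible} polynomials of exact degree $n$ without losing more than a negligible amount in height or in the value $|P(\zeta)|$. The key observation is that $\widehat{w}_{n}(\zeta)\geq n$ means that for every large $X$ there is a $P$ with $H(P)\leq X$ and $|P(\zeta)|\leq X^{-n}$; in fact, by a standard argument one obtains for each large $X$ \emph{at least two linearly independent} such polynomials $P_{1},P_{2}$ (indeed, if only a one-dimensional family of good polynomials existed near each scale, a gap argument would force $\widehat{w}_{n}$ strictly below $n$ — or one simply invokes the parametric geometry of numbers of Schmidt and Summerer to control the two smallest successive minima). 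Having two linearly independent good approximants $P_{1},P_{2}$ of comparable height $H$, one looks among the combinations $aP_{1}+bP_{2}$ with small integer coefficients $a,b$ for one that is irreducible of degree exactly $n$.

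For the case $n=3$ I would prove the needed irreducibility statement directly, which is why \eqref{eq:toeroe1} can be established unconditionally. A reducible cubic polynomial has either a rational root or factors as (linear)$\times$(quadratic); in the pencil $\{aP_{1}+bP_{2}\}$ the condition of having a fixed rational root $r=u/v$ is a single linear condition on $(a,b)$, and the heights of the roots that can possibly occur are polynomially bounded in $H$. One then counts: the number of pairs $(a,b)$ with $\max\{|a|,|b|\}\leq c H^{\epsilon}$ is $\asymp H^{2\epsilon}$, while the number of "bad" pairs — those for which $aP_{1}+bP_{2}$ has degree $<3$ (at most one projective point), or has a rational root with numerator and denominator bounded by a suitable power of $H$ — is, after a Mahler-measure / resultant estimate, $O(H^{\epsilon/2})$ or so, hence strictly fewer. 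This is exactly the content of the auxiliary counting result advertised in the abstract ("an upper bound for the number of certain linear combinations of two given relatively prime integer polynomials with a linear factor"), and I would cite or prove that lemma here. Thus a good irreducible cubic survives, and since $|aP_{1}(\zeta)+bP_{2}(\zeta)|\ll H^{\epsilon}\cdot X^{-3}$ while its height is $\ll H^{1+\epsilon}$, letting $\epsilon\to 0$ along a sequence gives $w_{=3}(\zeta)\geq\widehat{w}_{3}(\zeta)\geq 3$.

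For general $n\geq 4$ the same architecture applies, but the factorization patterns of a reducible degree-$n$ polynomial are more varied (it could split off an irreducible factor of any intermediate degree, not just a linear one), and I do not have an unconditional count of the bad combinations $aP_{1}+bP_{2}$ in the pencil. This is precisely why the theorem is stated conditionally on the hypothesis that some $aP+bQ$ with $\max\{|a|,|b|\}\leq c\max\{H(P),H(Q)\}^{\epsilon}$ is irreducible: granting that, the argument of the previous paragraph goes through verbatim with $P_{1},P_{2}$ in place of $P,Q$, since the assumed irreducible combination automatically has degree exactly $n$ (one may arrange that the leading coefficients of $P_{1},P_{2}$ are not both zero, or absorb a spurious power of $T$, as in the remark following \eqref{eq:sukmini2}), has height $\ll_{n} H^{1+\epsilon}$, and satisfies $|(aP_{1}+bP_{2})(\zeta)|\ll_{n} H^{\epsilon}X^{-n}$.

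The main obstacle, then, is the passage from "two good bounded-degree approximants exist" to "a good \emph{irreducible exact-degree} approximant exists". For $n=3$ this is handled by the explicit linear-factor counting lemma; for $n\geq 4$ it is isolated as an open irreducibility hypothesis about pencils of integer polynomials. A secondary technical point I would need to be careful about is guaranteeing the two approximants $P_{1},P_{2}$ of a common scale $X$ that are linearly independent with comparable heights — here I would lean on the Schmidt–Summerer parametric framework (the first two successive minima functions), which is the reason the paper mentions using that machinery in Section~\ref{proofs}.
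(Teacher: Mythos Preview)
Your overall architecture is right --- produce two small polynomials and take a short integer combination that is irreducible of exact degree $n$ --- but the way you manufacture the two polynomials has a genuine gap, and it is precisely the point where the paper's argument is sharper.

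You propose to take, at a given scale $X$, two \emph{linearly independent} polynomials $P_{1},P_{2}$ with $H(P_{i})\leq X$ and $\vert P_{i}(\zeta)\vert\leq X^{-n}$, appealing to Schmidt--Summerer or a gap argument. But linear independence is not enough: the irreducibility hypothesis in the theorem is stated for \emph{coprime} $P,Q$, and for good reason. If $P_{1}$ and $P_{2}$ share a nontrivial common factor $D$ (which linear independence certainly allows), then every combination $aP_{1}+bP_{2}$ is again divisible by $D$ and hence reducible. This is not a pathological scenario: for many $\zeta$ the only polynomials that are small to order $X^{-n}$ at a given scale are the multiples $P,\,TP,\,\ldots,\,T^{n-1}P$ of a single linear best approximation $P$, so your two ``independent'' witnesses are forced to have a common factor. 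Your counting of bad pairs then never gets off the ground. Nor does Schmidt--Summerer help here: controlling the first two successive minima gives independence, not coprimality, and in fact the paper does not use parametric geometry of numbers in this proof at all.

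The paper sidesteps this as follows. First it invokes Wirsing's observation (\cite[Hilfssatz~4]{wirsing}) that the polynomials realising $w_{n}(\zeta)$ may be taken \emph{irreducible}; fix such a $P$ of large height. If $\deg P=n$ one is done; otherwise $P\in\mathbb{Z}_{\leq n-1}[T]$. Now take $X$ just below $H(P)$ (precisely $X=K(n)H(P)/2$, with $K(n)$ the Gelfond constant) and use $\widehat{w}_{n}(\zeta)$ to find $R$ with $H(R)\leq X$ and $\vert R(\zeta)\vert\leq X^{-\widehat{w}_{n}(\zeta)+\epsilon}$. The height inequality $H(R)<K(n)H(P)$ forbids $P\mid R$ by Gelfond's Lemma, and since $P$ is irreducible this forces $\gcd(P,R)=1$. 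Setting $Q=T^{n-\deg R}R$ gives a coprime pair $P\in\mathbb{Z}_{\leq n-1}[T]$, $Q\in\mathbb{Z}_{=n}[T]$ with $Q(0)=0$, exactly the hypotheses of Theorem~\ref{irrpol}; for $n=3$ that theorem supplies a prime $p\ll X^{\epsilon}$ for which $Q+pP$ has no linear factor, hence (being cubic) is irreducible. The same pair feeds directly into the conditional hypothesis when $n\geq 4$. So the missing idea in your proposal is the combination ``take the first polynomial irreducible (Wirsing) and the second at slightly smaller height (Gelfond) to force coprimality'', rather than trying to find two independent polynomials at a common scale.
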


The proof of \eqref{eq:toeroe1} relies on a proof of the involved irreducibility criterion
in the particular case $n=3$, which is of some interest on its own. 
For the sequel we write
\[
R_{p}(T)=Q(T)+pP(T), \qquad S_{p}(T)=P(T)+pQ(T),
\]
for given integer polynomials $P,Q$ and an integer $p$. 

\begin{theorem} \label{irrpol}
Let $n\geq 2$ be an integer. Let
$P\in\mathbb{Z}_{\leq n-1}[T]$ non-zero and
$Q\in\mathbb{Z}_{=n}[T]$ such that
\begin{itemize}

\item $P,Q$ have no common
linear factor over $\mathbb{Z}[T]$
\item $Q(0)=0$, and
\item  $\frac{Q(T)}{TP(T)}$ is non-constant.

\end{itemize}
Let $X=\max\{ H(P),H(Q)\}$.
Then for any $\delta>0$, there exists a constant
$c=c(n,\delta)>0$ not depending on $P,Q$
such that the number of prime numbers $p$ for which 
either of the polynomials
$R_{p}$ or $S_{p}$ has a linear factor, is less
than $cX^{\delta}$. Thus,
for any $\varepsilon>0$ there exists $d=d(n,\varepsilon)>0$
and a prime number 
$p\leq dX^{\varepsilon}$ for which both
$R_{p}$ and $S_{p}$ have no linear factor.
Moreover, for any prime $p>nX^{n+1}$ the polynomials $R_{p}$ and $S_{p}$ both have no linear factor.
\end{theorem}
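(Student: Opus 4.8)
The plan is to count, for each candidate linear factor, the number of primes $p$ that can produce it, and then sum over the relatively few admissible linear factors. A linear factor of $R_p = Q + pP$ over $\mathbb{Z}[T]$ corresponds (after clearing content) to a rational root $r = u/v$ in lowest terms with $v \mid \mathrm{lc}(R_p)$ and $u \mid R_p(0)$. First I would bound the size of any such $u,v$: since $\deg Q = n$ and $\deg P \le n-1$, the leading coefficient of $R_p$ equals $\mathrm{lc}(Q)$ when $\deg P < n$, so $|v| \le H(Q) \le X$ independently of $p$; and $R_p(0) = Q(0) + pP(0) = pP(0)$ because $Q(0)=0$, so $u \mid pP(0)$. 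Thus for a fixed prime $p$ the numerator $u$ divides $p$ times a fixed nonzero integer of size $\le X$; writing $u = p^{\epsilon_0} u'$ with $\epsilon_0 \in\{0,1\}$ and $u' \mid P(0)$, one sees the set of possible rational roots, as $p$ ranges over primes, is governed by the finitely many $u'/v$ with $|u'|,|v| \le X$ (the factor $p$ in the numerator can be absorbed or forces $v$ to be a multiple of $p$, which is impossible once $p > X$). So modulo $O(X)$ primes there are at most $O(X^2)$ relevant candidate roots $r = u'/v$, and I must show each such fixed $r$ is a root of $R_p$ for at most boundedly many primes $p$. Writing $v^n R_p(r) = v^n Q(r) + p\, v^n P(r)$, this is an integer linear form $A + pB$ in $p$ with $A = v^nQ(u/v)$, $B = v^nP(u/v)$ fixed integers; it vanishes for more than one value of $p$ only if $A=B=0$, i.e. only if $r$ is a common root of $P$ and $Q$ — which, since a common rational root would give a common linear factor over $\mathbb{Q}[T]$ and hence (by Gauss) over $\mathbb{Z}[T]$, is excluded by hypothesis. (A small subtlety: a genuinely common \emph{rational} root is excluded, but I should also note that if $r$ is a root of, say, $Q$ only, then $B \ne 0$ forces at most one prime $p = -A/B$.) Hence each candidate $r$ kills at most one prime, giving at most $O(X^2) + O(X)$ bad primes for $R_p$ — but this is far too weak for the claimed $cX^\delta$ with $\delta$ arbitrarily small, so the naive count must be refined.

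The refinement, which is the technical heart, exploits the third hypothesis that $Q(T)/(TP(T))$ is nonconstant. The point is that the pairs $(u,v)$ for which $r=u/v$ is \emph{simultaneously} a root of $R_p$ for even one prime must satisfy strong divisibility constraints that cut the count from $X^2$ down to $X^\delta$. Concretely: if $r=u/v$ is a root of $R_p$ then $p\, v^n P(u/v) = -v^n Q(u/v)$, so $p$ divides $v^n Q(u/v) = v^{n-?}\cdot(\text{integer})$; since $Q(0)=0$ we may write $Q(T)=T\widetilde Q(T)$ and the divisibility becomes $p \mid u\cdot v^{n-1}\widetilde Q(u/v)$, while also (from $\deg$ considerations and $u$ coprime to $v$) $p \nmid v$; so $p \mid u\,\widetilde Q_{\hom}(u,v)$ where $\widetilde Q_{\hom}$ is the appropriate homogenization. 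The nonconstancy of $Q/(TP)$ guarantees that $\widetilde Q$ and $P$ are not proportional, so for the \emph{same} root to also be forced by the $S_p$ equation $P(r) + pQ(r)=0$ one gets a second, independent divisibility, and the two together pin $p$ to divide a nonzero integer of size $O(X^{O(n)})$; by the standard bound $\omega(m) \ll \log m/\log\log m$ on the number of prime divisors, only $O(\log X/\log\log X) = O(X^\delta)$ such primes survive. I would organize this as: (i) reduce to $p > X$ (handling $p \le X$ trivially, at cost $O(X)$, then sharpening — actually the $p\le X$ range needs the same argument, so better to run the divisibility argument uniformly in $p$); (ii) for each prime $p$ in the bad set, extract from "$R_p$ has a linear factor" a nonzero integer $N_p$ of height $X^{O(n)}$ with $p \mid N_p$; (iii) observe all these $N_p$ divide a single fixed nonzero integer $N$ of height $X^{O(n)}$ — this is where nonconstancy of $Q/(TP)$ is used, to ensure $N \ne 0$ — and conclude $\#\{\text{bad }p\} \le \omega(N) \ll_n X^\delta$. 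The same runs verbatim for $S_p$, with the roles of $P$ and $Q$ swapped; here one uses that $S_p(0) = P(0) + pQ(0) = P(0)$ is a fixed nonzero integer, so the constraint is even cleaner.

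For the two closing assertions: once the count of bad primes is $< cX^\delta$, choosing $\delta$ with $cX^\delta < \pi(dX^\varepsilon)$ for suitable $d = d(n,\varepsilon)$ and all large $X$ (and absorbing finitely many small $X$ into the constant) gives a prime $p \le dX^\varepsilon$ avoiding both bad sets — this is just the prime number theorem, or even Chebyshev, comparing $X^\delta$ with $X^\varepsilon/\log X$ for $\delta < \varepsilon$. For the very last sentence, I would argue directly: if $p > nX^{n+1}$ and $R_p = Q + pP$ had a rational root $u/v$ in lowest terms, then as above $v \mid \mathrm{lc}(Q)$ gives $|v| \le X$, and $u \mid R_p(0) = pP(0)$; but evaluating, $|R_p(u/v)| \cdot |v|^n = |v^nQ(u/v) + p\,v^nP(u/v)|$, and the term $p\,v^nP(u/v)$ — being a nonzero integer multiple of $p$ once $P(u/v)\ne 0$ — has absolute value $\ge p > nX^{n+1}$, while $|v^nQ(u/v)| \le (n+1)H(Q)\max(|u|,|v|)^n$; a crude bound on $|u| \le$ (something like $X$, using that $u \mid pP(0)$ together with the size of $P(u/v)$) makes the $Q$-term smaller than the $p$-term, contradicting $R_p(u/v)=0$; and $P(u/v)=0$ is impossible since then $u/v$ would be a common root of $P$ and $Q$. (I would double-check the exact exponent $n+1$ and constant $n$ here; the essential mechanism is that the leading $pP$ term dominates in size once $p$ exceeds a fixed power of $X$.) I expect the main obstacle to be step (iii) of the middle paragraph — cleanly isolating a \emph{single nonzero} integer $N$ of controlled height divisible by every bad prime — and in particular verifying that the hypothesis "$Q/(TP)$ nonconstant", together with "no common linear factor", is exactly what forces $N \ne 0$; getting the homogenization bookkeeping right (degrees, which variable the extra factor of $p$ attaches to, coprimality of $u$ and $v$) is where the care is needed.
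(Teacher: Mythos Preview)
Your setup is right, but the refinement has two genuine gaps.

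\textbf{First gap: the divisor bound is missing.} You observe that $v\mid a_n:=\mathrm{lc}(Q)$ and $u'\mid b_0:=P(0)$, and then count ``at most $O(X^2)$'' candidate roots $u'/v$. But the number of such pairs is not $O(X^2)$; it is exactly $\tau(a_n)\tau(b_0)$, and the whole point is that $\tau(m)\ll_\epsilon m^\epsilon$ (Apostol). This divisor bound is the engine that turns the naive count into $\ll_\delta X^{\delta}$. You never invoke it. Relatedly, your step~(iii) --- ``all $N_p$ divide a single fixed nonzero $N$ of height $X^{O(n)}$'' --- is not achievable. Different bad primes $p$ arise from different roots $u'/v$, and the natural single $N$ (a product over all admissible $(u',v)$) has $\log N$ of order $\tau(a_n)\tau(b_0)\cdot n\log X$, not $n\log X$. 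What the paper does instead is: for each of the $\tau(a_n)\tau(b_0)\ll_\delta X^{2\delta/3}$ pairs $(u',v)$, produce a nonzero integer of size $\le 2X^2$ that the corresponding bad primes must divide, then sum $\omega(\cdot)\ll\log X$ over the pairs.

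\textbf{Second gap: the case $p\mid u$ is mishandled.} When $u=p s$ with $s\mid b_0$, your proposed divisibility $p\mid u\cdot\widetilde Q_{\hom}(u,v)$ is trivial (it has the factor $u$), and your attempt to extract a ``second, independent divisibility'' from the $S_p$ equation is illegitimate: a prime bad for $R_p$ need not be bad for $S_p$, so you cannot use $P(r)+pQ(r)=0$. The correct move, and exactly where the hypothesis ``$Q/(TP)$ nonconstant'' enters, is recursive and uses only the $R_p$ equation: substitute $u=ps$ into $v^nR_p(u/v)=0$, divide by $p$, and reduce modulo $p$ to get $p\mid (sa_1+b_0v)$; if this integer vanishes, divide by $p$ again and reduce to get $p\mid (sa_2+b_1v)$; iterate. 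The chain $sa_k+b_{k-1}v$ for $k=1,\dots,n$ cannot be identically zero precisely because $(a_1,\dots,a_n)$ is not a scalar multiple of $(b_0,\dots,b_{n-1})$, i.e.\ $Q/(TP)$ is nonconstant. The first nonzero term has absolute value $\le 2X^2$ and $p$ divides it.

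Your sketch for the final assertion ($p>nX^{n+1}$) has the same issue with ``$|u|\le X$'': when $p\mid u$ this fails, and one again needs the recursive argument above (giving $p\le 2X^2$), not a crude size comparison.
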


\begin{remark}
The theorem is formally true for $n=1$ as well but 
the assumptions on $P,Q$
cannot be satisfied (which must be the case as the
claim is obviously false). Notice also that when $Q(0)=0$,
the other two conditions are clearly necessary for the conclusion. 
We propose the natural generalization
to drop the condition $Q(0)=0$ upon replacing
the third condition on $P,Q$ by the assumption 
that the rational function $Q(T)/P(T)$ is not 
a linear polynomial. Our proof of Theorem~\ref{irrpol}
cannot be modified in a straightforward
way to settle this conjecture. The situation also
becomes more difficult if we allow $P\in\mathbb{Z}_{=n}[T]$.
\end{remark}

The bound for the number of $R_{p}$ and $S_{p}$ with linear factor
can be significantly reduced
if we assume that the constant coefficient $b_{0}$ 
of $P$ and the 
leading coefficient $a_{n}$ of $Q$ both
do not have an untypically large number of divisors $\tau(b_{0}), \tau(a_{n})$. 
More precisely, the proof shows an upper bound
of the form
$\ll_{n,\delta} \tau(b_{0})\tau(a_{n})\log X$ for the number
of $R_{p}$ or $S_{p}$ with linear factor. 
If we assume $\tau(N)\ll \log N$ which is true
on average (see~\cite[Theorem~3.3]{apostol}) for both
$N=a_{n}$ and $N=b_{0}$,
the bound becomes $\ll_{n,\delta} (\log X)^{3}$. 
We emphasize that the first and third 
condition on $P,Q$ in Theorem~\ref{irrpol} are in particular 
satisfied whenever $P$ and $Q$ have no common factor. 

\begin{corollary}
The conclusion of Theorem~\ref{irrpol} holds if we
assume $P\in\mathbb{Z}_{\leq n-1}[T]$ and 
$Q\in\mathbb{Z}_{=n}[T]$ have no common factor and $Q(0)=0$, 
instead of 
the three conditions on $P,Q$ there. 
\end{corollary}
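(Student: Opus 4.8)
The plan is to deduce the corollary directly from Theorem~\ref{irrpol} by checking that its three hypotheses on $P$ and $Q$ are consequences of the stronger assumptions ``$P,Q$ have no common factor and $Q(0)=0$''. The second hypothesis of Theorem~\ref{irrpol} is literally among the assumptions. The first hypothesis, that $P$ and $Q$ share no common linear factor over $\mathbb{Z}[T]$, is immediate, since having no common non-constant factor at all trivially excludes a common linear one. Along the way I would also note $P\neq 0$: otherwise $Q$ itself would be a common factor of $P$ and $Q$, and it is non-constant because $\deg Q=n\geq 2$. Thus the only point genuinely requiring an argument is the third hypothesis, namely that $Q(T)/\big(TP(T)\big)$ is non-constant.

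To verify this I would argue by contradiction. Suppose $Q(T)=\lambda\,T P(T)$ for some $\lambda\in\mathbb{R}$; since $Q\neq 0$ we have $\lambda\neq 0$. Comparing degrees gives $n=\deg Q=1+\deg P$, so $\deg P=n-1\geq 1$ because $n\geq 2$, and in particular $P$ is non-constant. Comparing leading coefficients shows $\lambda=a_{n}/b_{n-1}\in\mathbb{Q}$, where $a_{n}$ and $b_{n-1}$ are the leading coefficients of $Q$ and $P$ respectively. Hence $P$ divides $Q$ in $\mathbb{Q}[T]$, and by Gauss's lemma the primitive part of $P$, a non-constant polynomial in $\mathbb{Z}[T]$, divides $Q$ in $\mathbb{Z}[T]$. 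This contradicts the assumption that $P$ and $Q$ have no common factor, completing the verification. Theorem~\ref{irrpol} then applies verbatim, and the reduction in the bound mentioned before the statement (under a divisor hypothesis on $b_{0}$ and $a_{n}$) is inherited as well.

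There is essentially no obstacle here: the corollary merely makes precise the remark immediately preceding it. The only place where a little care is needed is the final step, passing from ``$P$ divides $Q$ over $\mathbb{Q}$'' to ``$P$ and $Q$ share a non-constant factor over $\mathbb{Z}$''; this is exactly what Gauss's lemma supplies. It is also the reason the degree identity $\deg P=n-1$ is actually used, for when $n=1$ one could have $P$ a nonzero constant and $Q=\lambda T P$ of degree one with no common non-constant factor, so that the third hypothesis fails; this is why the statement needs $n\geq 2$, as already inherited from Theorem~\ref{irrpol}.
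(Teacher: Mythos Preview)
Your proposal is correct and matches the paper's approach: the paper gives no separate proof but simply records the corollary as a consequence of the remark preceding it, that the first and third conditions of Theorem~\ref{irrpol} are implied whenever $P$ and $Q$ have no common factor. Your argument spells out exactly this implication, with the Gauss lemma step for the third condition being the only non-trivial point, which you handle correctly.
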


For $n\geq 4$, the imposed condition in Theorem~\ref{basned}
seems to be rather weak as well, however we do not have
a rigorous proof. Our proof of Theorem~\ref{irrpol}
already requires some results from analytic number theory. 
We quote some related irreducibility results.
It was shown by Cavachi~\cite{cav} 
that for given coprime $Q\in\mathbb{Z}_{=n}[T],P\in\mathbb{Z}_{\leq n-1}[T]$, among the polynomials
$S_{p}$ for $p$ a prime number, only finitely many 
are reducible.
Subsequent papers even provided effective
lower bounds for $p$, in dependence of the degrees and heights 
of $P$ and $Q$, such that for any prime $p$ exceeding this bound the 
irreducibility is settled~\cite{cav2},~\cite{bonciocat}. 
See also~\cite{bugagain} for a recent generalization to prime powers. However, the bounds on $p$ are comparable in size with our
bound $nX^{n+1}$, too weak for the 
purpose to prove~\eqref{eq:toeroe} (in fact it
is not hard to see that $R_{p}$ indeed can be irreducible
for some $p\leq X^{1-\epsilon}$ with $\epsilon>0$, start 
with a reducible
polynomial $U\in\mathbb{Z}_{=n}[T]$ and 
$P\in\mathbb{Z}_{\leq n-1}[T]$ coprime to it 
and let $Q=U-pP$ for large $p$). 
Concerning $R_{p}$, 
no irreducibility results of this kind seem available.
We finally remark that if $w_{n}(\zeta)>w_{n-1}(\zeta)$ we obviously
have $w_{n=}(\zeta)=w_{n}(\zeta)\geq n$ in view of \eqref{eq:dochmachen}. 
In particular, for fixed $\zeta$ the claim \eqref{eq:toeroe}
certainly holds for infinitely 
many indices $n$, unless $\zeta$ is a $U_{m}$-number for $m\geq 2$
(the case $m=1$ will be covered by Corollary~\ref{clintonh} below).

\subsection{Particular classes of numbers}
Our upcoming results 
will frequently use special classes of real numbers with
continued fraction expansions of a special form.
We refer to~\cite[Section~1.2]{bugbuch} for an introduction to continued fractions.
Let $w\geq 1$ be a real parameter.
We define $\mathscr{B}_{w}$ as the class of numbers whose
convergents $p_{i}/q_{i}$ in the continued fraction expansion
satisfy $p_{1}=1, q_{1}=2$, and the recurrence relation 
\[
p_{i+2}=M\lfloor p_{i+1}^{w-1}\rfloor+p_{i}, \qquad
q_{i+2}=M\lfloor q_{i+1}^{w-1}\rfloor+q_{i},
\]
where $M\geq 1$ is any positive integer parameter. This is,
apart from some restriction on $M$, the same
class of numbers considered by
Bugeaud~\cite{bug}. He used the equivalent recursive definition
\begin{equation} \label{eq:zetard}
\zeta=[0;2,M\lfloor q_{1}^{w-1}\rfloor,M\lfloor q_{2}^{w-1}\rfloor,\cdots],
\end{equation}
where $q_{1}=2$ and $q_{j}$ are defined as the 
denominator of the $j$-th 
convergent to the real number $\zeta$.
We naturally extend this concept to $w=\infty$ by 
defining $\mathscr{B}_{\infty}$ the set of numbers with
\begin{equation} \label{eq:unendlich}
\lim_{j\to\infty} \frac{\log a_{j+1}}{\log a_{j}}= \infty,
\end{equation}
for $(a_{j})_{j\geq 1}$ the sequence partial quotients associated 
to $\zeta$. Indeed, similar to \eqref{eq:zetard}, this assumption
implies that every convergent is a very good approximation 
to $\zeta$.
The set $\mathscr{B}_{\infty}$ was called {\em strong Liouville numbers} by LeVeque~\cite{leveque}. In fact, all our results below concerning $\mathscr{B}_{\infty}$ remain valid for the wider class of semi-strong Liouville numbers
introduced by Alnia\c{c}ik~\cite{alni}.
Bugeaud~\cite[Corollary~1]{bug} showed that when $w\geq 2n-1$, 
and $M$ is sufficiently large in terms of $n$, 
then any number as in \eqref{eq:zetard} satisfies
\begin{equation} \label{eq:open}
w_{1}(\zeta)=w_{1}^{\ast}(\zeta)=w_{2}(\zeta)=\cdots=w_{n}(\zeta)=w_{n}^{\ast}(\zeta)=w.
\end{equation}
For $n\geq 1$ and $w\in[n,\infty]$, we denote by $\mathscr{D}_{n,w}$ the class of real numbers that satisfy \eqref{eq:open}.
Any set $\mathscr{D}_{n,\infty}$ coincides with the set of 
Liouville numbers.
Our proof of Theorem~\ref{liouspez} below will show that actually one may take any integer $M\geq 1$ for the conclusion \eqref{eq:open} when $w\geq 2n-1$, in 
other words every $\zeta\in \mathscr{B}_{w}$ satisfies \eqref{eq:open} for $w\geq 2n-1$. 
Moreover the claim obviously remains true when $w=\infty$.
Thus we have
\begin{equation} \label{eq:exist}
\mathscr{B}_{w}\subseteq \mathscr{D}_{n,w}, \qquad w\in[2n-1,\infty].
\end{equation}
For smaller parameters $w\in[n,2n-1)$, it has not been yet shown 
that $\mathscr{D}_{n,w}\neq \emptyset$, so our results below
on sets $\mathscr{D}_{n,w}$ are in fact kind of 
conditional when $w<2n-1$.
However, we strongly believe that $\mathscr{D}_{n,w}\neq \emptyset$
always holds, maybe \eqref{eq:exist} even 
extends to $w\in[n,\infty]$.
In this context we refer to the {\em Main Problem} formulated in~\cite[Section~3.4, page~61]{bugbuch}, which if true would 
directly imply this hypothesis. 
The numbers in $\mathscr{D}_{n,w}$ and $\mathscr{B}_{w}$
have biased approximation properties.
Their special structure permits to determine most
exponents of approximation, and it will turn out that
they behave differently with respect to approximation by 
numbers of bounded degree
than by numbers of prescribed degree. 

\subsection{Properties of $\mathscr{B}_{w}$ and $\mathscr{D}_{n,w}$}
\label{neurey}
It has been settled in~\cite[Theorem~5.1]{j3} and
within the proof of~\cite[Theorem~5.6]{bdraft} respectively
that for $\zeta\in\mathscr{D}_{n,w}$ with $w\in[n,\infty]$, the 
classical uniform exponents can be determined as
\begin{equation} \label{eq:kannt}
\widehat{w}_{n}(\zeta)=n, \qquad \widehat{w}_{n}^{\ast}(\zeta)
= \frac{w}{w-n+1}.
\end{equation}
As a first new contribution we determine the uniform 
exponents of prescribed degree $n$ for numbers in $\mathscr{D}_{n,w}$.
For the sequel we agree on $1/\infty=0$ and $1/0=+\infty$. 

\begin{theorem} \label{bs}
Let $n\geq 2$ be an integer, $w\in[n,\infty]$ and $\zeta\in\mathscr{D}_{n,w}$.
Then 
\begin{equation}  \label{eq:erstard}
\widehat{w}_{=n}(\zeta)=\widehat{w}_{=n}^{\ast}(\zeta)=\frac{n}{w-n+1}.
\end{equation}
\end{theorem}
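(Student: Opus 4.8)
The plan is to establish \eqref{eq:erstard} by proving two inequalities, treating the best-polynomial exponent $\widehat{w}_{=n}(\zeta)$ and the algebraic-number exponent $\widehat{w}_{=n}^\ast(\zeta)$ in parallel. For the upper bound, I would exploit the very rigid approximation structure of $\zeta\in\mathscr{D}_{n,w}$. Since \eqref{eq:open} holds and, more importantly, the continued fraction expansion \eqref{eq:zetard} forces every good rational approximation $p/q$ to be (essentially) a convergent $p_i/q_i$, one knows precisely at which heights and to what quality $\zeta$ can be approximated by numbers of bounded degree. The key point is that for these numbers the ``gaps'' in the sequence of best approximations are huge (controlled by the parameter $w$), so between consecutive critical heights there are no nontrivial approximations at all. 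For a polynomial $P\in\mathbb{Z}_{=n}[T]$ of height $H(P)\leq X$ with $X$ lying in such a gap, one gets a lower bound $|P(\zeta)|\gg X^{-n/(w-n+1)+\epsilon}$ (respectively a lower bound on $|\zeta-\alpha|$ for $\alpha\in\mathbb{A}_{=n}$), which gives $\widehat{w}_{=n}(\zeta)\leq n/(w-n+1)$ and likewise for the starred exponent. Concretely I would compare against \eqref{eq:kannt}: the relation $\widehat{w}_n^\ast(\zeta)=w/(w-n+1)$ together with \eqref{eq:allesfa} or a direct computation pins down how small $|P(\zeta)|$ and $|\zeta-\alpha|$ can be for \emph{all} large $X$.

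For the lower bound $\widehat{w}_{=n}(\zeta)\geq n/(w-n+1)$, the idea is constructive. Given a large convergent denominator $q=q_i$, the polynomial $P(T)=(qT-p)^{?}\cdots$ will not do directly because it must have degree exactly $n$ and be irreducible for the starred exponent; but for $\widehat{w}_{=n}$ one only needs an \emph{irreducible} $P\in\mathbb{Z}_{=n}[T]$. Here I would take a polynomial closely related to a high power or a clever multiple of the linear form $q\zeta-p$, e.g. adjust $P(T)=(qT-p)P_0(T)+$ (small perturbation) with $P_0$ chosen so that $P$ has degree $n$, is irreducible, and satisfies $|P(\zeta)|\asymp q^{-1}\cdot(\text{height})$ — then optimize. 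Alternatively, and more cleanly, one can invoke Theorem~\ref{irrpol} (or its Corollary): starting from a reducible degree-$n$ polynomial built from the convergent together with a coprime auxiliary $P\in\mathbb{Z}_{\leq n-1}[T]$, one perturbs by a small prime multiple to restore irreducibility while keeping $|P(\zeta)|$ essentially unchanged. This is exactly the mechanism used for \eqref{eq:toeroe1}, and it should transfer here to produce, for every large $X$, an irreducible $P\in\mathbb{Z}_{=n}[T]$ with $H(P)\leq X$ and $|P(\zeta)|\leq X^{-n/(w-n+1)+\epsilon}$. For the starred exponent one then passes to the root $\alpha$ of $P$ nearest to $\zeta$ and controls $|\zeta-\alpha|$ via the standard estimate $|\zeta-\alpha|\asymp_{n,\zeta}|P(\zeta)|/|P'(\zeta)|$, where the denominator is under control because of the non-degeneracy forced by the structure of $\mathscr{D}_{n,w}$.

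The main obstacle I anticipate is the irreducibility and degree bookkeeping in the lower-bound construction: one needs a polynomial that is simultaneously (i) of degree exactly $n$, (ii) irreducible, and (iii) of the right height with $|P(\zeta)|$ as small as the heuristic predicts, and these requirements pull against each other. The perturbation argument from Theorem~\ref{irrpol} resolves (ii) at the cost of only an $X^\epsilon$ loss in height, which is harmless for uniform exponents; but one must check that the hypotheses of that theorem (the coprimality and the non-constancy of $Q(T)/(TP(T))$) can be arranged for the specific $P,Q$ arising from the convergents, and that $Q(0)=0$ can be forced without destroying the approximation quality. A secondary subtlety is the case $w=\infty$ (Liouville numbers), where $n/(w-n+1)=0$: here the upper bound says $\widehat{w}_{=n}(\zeta)=\widehat{w}_{=n}^\ast(\zeta)=0$, meaning no uniform approximation of any positive order by prescribed-degree algebraic numbers; this should follow immediately from the gap structure, since the denominators $a_j$ grow so fast that for most $X$ there is simply nothing of degree $n$ close to $\zeta$ — but the argument must be phrased carefully because reducible near-approximations (powers of linear forms $T^{n-1}(q_iT-p_i)$) do exist and must be excluded by the irreducibility requirement in the definition of $\widehat{w}_{=n}$.
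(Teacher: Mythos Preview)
Your upper-bound sketch is essentially on target: the paper also argues that at certain scales $X$ every $P\in\mathbb{Z}_{\leq n}[T]$ with $H(P)\leq X$ and $|P(\zeta)|$ small is a polynomial multiple of a single linear form $q_iT-p_i$, hence never irreducible of degree $n$. The paper phrases this as $\widehat{w}_{=n}(\zeta)\leq \widehat{w}_{n,n+1}(\zeta)$ and then computes $\widehat{w}_{n,n+1}(\zeta)=1/\lambda_n(\zeta)=n/(w-n+1)$ via Mahler's duality \eqref{eq:mahler}. Your appeal to \eqref{eq:kannt} and \eqref{eq:allesfa} alone would only give $\widehat{w}_{=n}(\zeta)\leq n$, which is far too weak; you really need the $(n+1)$-st successive minimum, not the first.

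The lower bound is where your plan has genuine gaps. First, Theorem~\ref{irrpol} only excludes \emph{linear} factors of $R_p$ and $S_p$; for $n\geq 4$ this does not yield irreducibility, which is precisely why \eqref{eq:toeroe} in Theorem~\ref{basned} is left conditional. So your perturbation mechanism breaks down for $n\geq 4$. Second, and more seriously for the starred exponent, your passage ``control $|\zeta-\alpha|$ via $|P(\zeta)|/|P'(\zeta)|$, where the denominator is under control because of non-degeneracy'' is exactly the obstruction in Wirsing's problem: there is no general mechanism to bound $|P'(\zeta)|$ from below, and nothing about the structure of $\mathscr{D}_{n,w}$ supplies one. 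Third, even granting irreducibility, your construction starts from a fixed convergent and it is not clear it furnishes a good irreducible polynomial for \emph{every} large $X$, which is what the uniform exponent demands.

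The paper sidesteps all three issues in one stroke. From $w_1(\zeta)=w_n(\zeta)=w$ it observes that the $n$ polynomials $P, TP,\ldots,T^{n-1}P$ (with $P$ linear) force $w_{n,n}(\zeta)=w$, and via parametric geometry of numbers this yields equality in the right Khintchine inequality, i.e.\ $\lambda_n(\zeta)=(w-n+1)/n$. Then the lower bound $\widehat{w}_{=n}^{\ast}(\zeta)\geq 1/\lambda_n(\zeta)$ is read off directly from Theorem~\ref{korola} (the Davenport--Schmidt--Bugeaud--Teuli\'e transference), which produces $\alpha\in\mathbb{A}_{=n}$ outright and never requires building an irreducible polynomial by hand or controlling a derivative. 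The whole proof is then the sandwich
\[
\frac{n}{w-n+1}=\frac{1}{\lambda_n(\zeta)}\leq \widehat{w}_{=n}^{\ast}(\zeta)\leq \widehat{w}_{=n}(\zeta)\leq \widehat{w}_{n,n+1}(\zeta)=\frac{1}{\lambda_n(\zeta)}=\frac{n}{w-n+1}.
\]
You should replace your constructive lower bound by this transference argument.
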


The left identity 
contrasts $\widehat{w}_{n}(\zeta)>\widehat{w}_{n}^{\ast}(\zeta)$
for $\zeta\in\mathscr{D}_{n,w}$ with $w>n$ from \eqref{eq:kannt}.
For our next corollary,
we define, as for example in~\cite{bug}, 
the {\em spectrum} of an exponent of approximation as the set of real values taken by it as the argument
$\zeta$ runs through the set of transcendental real numbers.
We recall some facts.
Metric results by Baker and Schmidt~\cite{baks}
and Bernik~\cite{bernik} imply that the spectra of $w_{n}(\zeta)$ 
and $w_{=n}(\zeta)$ 
equal $[n,\infty]$, and the spectra 
of $w_{n}^{\ast}(\zeta)$ and $w_{=n}^{\ast}(\zeta)$ contain $[n,\infty]$.
(In fact the inclusion of $\{\infty\}$ requires also the existence of $U_{n}$-numbers, see~\cite{leveque}.) Hence
Wirsing's problem is equivalent to asking whether the spectrum of $w_{n}^{\ast}(\zeta)$ is identical to $[n,\infty]$.
Concerning classical uniform exponents,
it is known that the spectrum of $\widehat{w}_{n}$ is 
contained in $[n,\mu(n)]$ with
\begin{equation}  \label{eq:muehka}
\mu(2)=\frac{3+\sqrt{5}}{2},\quad \mu(3)= 3+\sqrt{2},\quad
\mu(n)= n-\frac{1}{2}+\sqrt{n^{2}-2n+\frac{5}{4}}, \quad n\geq 4.
\end{equation}
The lower bounds arise from \eqref{eq:dochmachen}, the
upper bounds are currently best known~\cite[Theorem~2.1]{bschlei}. 
For previously known results see Davenport and Schmidt~\cite{davsh}. 
The bound $\mu(2)$ is optimal, Roy~\cite{roy} proved equality 
$\widehat{w}_{2}(\zeta)=\mu(2)$ for certain
$\zeta$ he called {\em extremal numbers}. 
We refer to~\cite{bdraft} for further references on the 
spectrum of $\widehat{w}_{2}$. For the exponent $\widehat{w}_{n}^{\ast}$,
it follows from \eqref{eq:wepper} and \eqref{eq:allesfa} 
that its spectrum is contained in $[1,\mu(n)]$, and
furthermore we know~\cite{bdraft} that it contains $[1,2-1/n]$.
Similarly, the spectra of the exponents $\widehat{w}_{=n}(\zeta)$ 
and $\widehat{w}_{=n}^{\ast}(\zeta)$ 
are contained in $[0,\mu(n)]$.

If we let $w$ in \eqref{eq:open}
vary in $[2n-1,\infty]$, from Theorem~\ref{bs} and \eqref{eq:exist} we derive new information on the spectra of 
the exponents 
$\widehat{w}_{=n}(\zeta)$ and $\widehat{w}_{=n}^{\ast}(\zeta)$ 
and certain differences.

\begin{corollary} \label{sb}    
Let $n\geq 2$ be an integer.
The spectra of $\widehat{w}_{=n}(\zeta)$ and $\widehat{w}_{=n}^{\ast}(\zeta)$
both contain the interval $[0,1]$. 
The spectrum of $\widehat{w}_{n}-\widehat{w}_{=n}$ contains $[n-1,n]$ and the spectrum 
of $\widehat{w}_{n}^{\ast}-\widehat{w}_{=n}^{\ast}$ contains $[1-\frac{1}{n},1]$.
\end{corollary}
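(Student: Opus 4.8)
The plan is to exploit the one-parameter family of classes $\mathscr{D}_{n,w}$ with $w\in[2n-1,\infty]$, all of which are non-empty: for finite $w$ this is \eqref{eq:exist} together with the fact that each $\mathscr{B}_{w}$ is non-empty by its explicit continued fraction construction, and $\mathscr{D}_{n,\infty}$ is the set of Liouville numbers. For $\zeta\in\mathscr{D}_{n,w}$ all four exponents occurring in the statement are known explicitly: Theorem~\ref{bs} gives $\widehat{w}_{=n}(\zeta)=\widehat{w}_{=n}^{\ast}(\zeta)=\frac{n}{w-n+1}$, while \eqref{eq:kannt} gives $\widehat{w}_{n}(\zeta)=n$ and $\widehat{w}_{n}^{\ast}(\zeta)=\frac{w}{w-n+1}$. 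Hence each of the quantities in question, evaluated along $\mathscr{D}_{n,w}$, is a fixed rational function of the single real parameter $w$, and the corollary reduces to checking the ranges of these functions.

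First I would treat $\widehat{w}_{=n}$ and $\widehat{w}_{=n}^{\ast}$ simultaneously, since they agree on $\mathscr{D}_{n,w}$ by Theorem~\ref{bs}. The map $w\mapsto \frac{n}{w-n+1}$ is continuous and strictly decreasing on $[2n-1,\infty]$, with value $1$ at $w=2n-1$ and value $0$ at $w=\infty$; by the intermediate value theorem it attains every value in $[0,1]$, so both spectra contain $[0,1]$. Next, along $\mathscr{D}_{n,w}$ we have $\widehat{w}_{n}(\zeta)-\widehat{w}_{=n}(\zeta)=n-\frac{n}{w-n+1}$, a continuous increasing function of $w\in[2n-1,\infty]$ equal to $n-1$ at $w=2n-1$ and to $n$ at $w=\infty$, giving $[n-1,n]$ in the spectrum of $\widehat{w}_{n}-\widehat{w}_{=n}$. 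Finally, $\widehat{w}_{n}^{\ast}(\zeta)-\widehat{w}_{=n}^{\ast}(\zeta)=\frac{w-n}{w-n+1}$ along $\mathscr{D}_{n,w}$, which is continuous and increasing on $[2n-1,\infty]$, equal to $\frac{n-1}{n}=1-\frac1n$ at $w=2n-1$ and to $1$ at $w=\infty$, yielding $[1-\frac1n,1]$ for the last spectrum.

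Since the whole argument is a substitution into Theorem~\ref{bs} and \eqref{eq:kannt} followed by an elementary monotonicity and continuity check, I do not anticipate a genuine obstacle. The only point that needs care is that the range of $w$ must include the endpoint $w=\infty$ for the intervals to be closed on that side, and this is legitimate precisely because $\mathscr{D}_{n,\infty}$ is the (non-empty) set of Liouville numbers; alternatively one could argue with limits as $w\to\infty$, but invoking Liouville numbers directly makes the endpoints attained.
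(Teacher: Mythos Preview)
Your proposal is correct and follows exactly the same approach as the paper: pick $\zeta\in\mathscr{D}_{n,w}$ for $w\in[2n-1,\infty]$ (non-empty by \eqref{eq:exist}), then combine the identities \eqref{eq:kannt} and \eqref{eq:erstard} from Theorem~\ref{bs}. The paper states this in two lines; you have simply spelled out the resulting elementary range computations explicitly.
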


\begin{proof}
For $w\in[2n-1,\infty]$
consider any $\zeta\in\mathscr{D}_{n,w}$, which is non-empty by \eqref{eq:exist}. 
Combine the identities \eqref{eq:kannt} and \eqref{eq:erstard}.
\end{proof}

From the above we expect
that the spectra of $\widehat{w}_{=n}(\zeta)$ and $\widehat{w}_{=n}^{\ast}(\zeta)$ actually
contain $[0,n]$, similar as for $\widehat{w}_{n}(\zeta)$ 
and $\widehat{w}_{n}^{\ast}(\zeta)$ where
we expect the interval $[1,n]$ to be included. 
Our next result establishes an estimation of $w_{=n}$ for the class of numbers as above.

\begin{theorem} \label{liouspez}
Let $n\geq 2$, $w\in[2n-1,\infty]$ and $\zeta\in\mathscr{B}_{w}$. Then
\begin{equation} \label{eq:holteufel}
w_{=n}(\zeta)\leq \frac{nw}{w-n+1}.
\end{equation}
\end{theorem}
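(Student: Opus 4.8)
The plan is to exploit the very rigid structure of the continued fraction expansion of $\zeta \in \mathscr{B}_w$ to control the quantity $|P(\zeta)|$ for every irreducible $P \in \mathbb{Z}_{=n}[T]$. The key point is that for numbers in $\mathscr{B}_w$, the convergents $p_i/q_i$ grow in a controlled multiplicative way, namely $\log q_{i+1} \asymp (w-1)\log q_i$ (for $w < \infty$), so consecutive denominators satisfy $q_{i+1} \asymp q_i^{w-1}$; in particular every convergent is an extremely good rational approximation, with $|\zeta - p_i/q_i| \asymp q_i^{-1} q_{i+1}^{-1} \asymp q_i^{-w}$, and moreover there is a large "gap" in the spectrum of denominators of good rational approximations. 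First I would recall from the literature (the proof of \eqref{eq:open}, i.e. the arguments of Bugeaud~\cite{bug}) the precise estimate: for a rational $p/q$ in lowest terms with $q_i \le q < q_{i+1}$ one has $|\zeta - p/q| \gg q^{-1}q_{i+1}^{-1}$ unless $p/q = p_i/q_i$, and the best rational approximation of denominator at most $X$ behaves like $\asymp X^{-1}q_{i(X)+1}^{-1}$ where $q_{i(X)}$ is the largest convergent denominator not exceeding $X$.

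Next, given a parameter $X$ and an irreducible $P \in \mathbb{Z}_{=n}[T]$ with $H(P) \le X$ realizing $|P(\zeta)| \le X^{-w_{=n}(\zeta)+\epsilon}$, I would factor $|P(\zeta)| = |a_n| \prod_{j=1}^{n} |\zeta - \alpha_j|$ over the roots $\alpha_j$ of $P$. The standard approach to bounding $w^{\ast}$-type exponents (and here $w_{=n}$) is to show that $P$ must have a root $\alpha$ very close to $\zeta$, and then to compare $\alpha$ with a nearby convergent $p_i/q_i$: since $\alpha$ is a real algebraic number of degree exactly $n$, a Liouville-type bound gives $|\alpha - p_i/q_i| \gg H(\alpha)^{-n} q_i^{-1}$ (or more precisely $\gg_n H(P)^{-1} q_i^{-n}$ after homogenizing), so if $\zeta$ is very close to both $\alpha$ and $p_i/q_i$ we get an upper bound linking $H(P)$, $q_i$, and $q_{i+1}$. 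The heart of the matter is the interplay between the "uniform" behaviour (governed by $\widehat{w}_{=n}$, which by Theorem~\ref{bs} equals $n/(w-n+1)$) and the location of $X$ relative to the convergent denominators: the worst case for $w_{=n}$ occurs when $X = H(P)$ sits just below a convergent denominator $q_{i+1}$, i.e. $X \asymp q_i^{w-1}$ roughly, and then the best one can do is $|P(\zeta)| \asymp H(P) \cdot |\zeta - \alpha|$ with $|\zeta-\alpha|$ no smaller than the distance dictated by the next convergent. Carrying the bookkeeping through with $q_i \asymp X^{1/(w-1)}$, $q_{i+1} \asymp q_i^{w-1} \asymp X$, and $|\zeta - \alpha| \gg q_{i+1}^{-1} \cdot (\text{something involving } H(P)^{-1})$ should yield exactly $|P(\zeta)| \gg X^{-nw/(w-n+1)+o(1)}$, i.e. \eqref{eq:holteufel}. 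The case $w = \infty$ (strong Liouville numbers) is easier and handled separately: there the gaps between convergent denominators are super-polynomial, so one directly gets $w_{=n}(\zeta) \le n = nw/(w-n+1)|_{w=\infty}$ by a cruder argument, essentially because any $P$ of height $X$ with a root near $\zeta$ forces that root to be near a convergent $p_i/q_i$ with $q_i$ bounded polynomially in $X$, and the Liouville bound for degree-$n$ algebraics then caps the approximation quality at exponent $n$.

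The main obstacle I anticipate is the same one that makes $w_{=n}$ genuinely different from $w_n$: the polynomial $P$ of exact degree $n$ need not have its root $\alpha$ of full degree $n$ be the one closest to $\zeta$, and one must rule out degenerate configurations where several roots of $P$ cluster near $\zeta$, or where $P$ is close to a lower-degree polynomial times a linear factor — but here $P$ is assumed irreducible of degree $n$, which is precisely the hypothesis that lets us invoke the degree-$n$ Liouville inequality $|\alpha - p/q| \gg_n H(\alpha)^{-n}|q|^{-1}$ for the root $\alpha$ nearest $\zeta$ and for any convergent $p_i/q_i$. The delicate step is then choosing the right convergent index $i$ as a function of $H(P)$: one wants $q_i$ just large enough that $q_i^{n} H(P) \gtrsim q_{i+1}$ fails (so that the Liouville bound is not yet vacuous) yet $q_{i+1}$ as small as possible; the optimization of $q_i \asymp H(P)^{1/(n \vee (w-1))}$ — and verifying that the resulting exponent is $nw/(w-n+1)$ rather than something weaker — is where the arithmetic of $\mathscr{B}_w$ (specifically $q_{i+1} \asymp q_i^{w-1}$ together with $w \ge 2n-1$, which guarantees $w - 1 \ge n$ so that the convergent growth dominates the Liouville exponent) must be used in full. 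I would also need a short argument that the supremum defining $w_{=n}$ is approached along $X = H(P)$ with $P$ having its nearest root $\alpha$ satisfying $|\zeta - \alpha| \asymp |P(\zeta)|/H(P)$ (otherwise $|P(\zeta)|$ is even larger), which follows from the product formula for $|P(\zeta)|$ together with the fact that the other roots $\alpha_j$ are at distance $\gg 1$ from $\zeta$ — a consequence of $H(P) \le X$ bounding all roots in a fixed disc and of $\zeta$ being transcendental, hence at positive distance from the finitely many algebraic numbers of height and degree below any fixed bound, made quantitative via a Liouville estimate again.
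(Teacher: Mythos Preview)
Your overall strategy --- exploit the rigid convergent structure of $\zeta\in\mathscr{B}_w$ and play an irreducible $Q\in\mathbb{Z}_{=n}[T]$ off against the linear polynomials $P_j(T)=q_jT-p_j$ --- is the right one, and coincides with the paper's. But the implementation you sketch has a genuine gap.

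The flaw is in the passage from a lower bound on $|\zeta-\alpha|$ for the \emph{closest} root $\alpha$ of $Q$ to a lower bound on $|Q(\zeta)|$. You assert $|Q(\zeta)|\asymp H(Q)\cdot|\zeta-\alpha|$, arguing that the other roots stay at distance $\gg 1$ from $\zeta$. This is not justified: as $H(Q)\to\infty$ there is no uniform positive lower bound on $|\zeta-\alpha_j|$ for the remaining roots, and nothing prevents several (possibly complex) roots of $Q$ from clustering near $\zeta$, which would make $|Q(\zeta)|$ much smaller than $H(Q)\cdot|\zeta-\alpha|$. Likewise, the Liouville inequality you invoke bounds $|\alpha-p_i/q_i|$ for \emph{one} root $\alpha$; it says nothing about the product $\prod_j|\zeta-\alpha_j|$. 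So the chain ``Liouville for $\alpha$ $\Rightarrow$ lower bound on $|\zeta-\alpha|$ $\Rightarrow$ lower bound on $|Q(\zeta)|$'' breaks at the last step.

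The paper circumvents this entirely by working with a resultant-type estimate (Lemma~\ref{bslemma}): for coprime $P,Q$ of degrees $m,n$, at least one of
\[
|P(\zeta)|\gg_{m,n,\zeta} H(P)^{-n+1}H(Q)^{-m}
\quad\text{or}\quad
|Q(\zeta)|\gg_{m,n,\zeta} H(P)^{-n}H(Q)^{-m+1}
\]
holds. Applied with $P=P_j$ (degree $1$) and the irreducible $Q$ (degree $n$), this gives a \emph{direct} lower bound on $|Q(\zeta)|$ without ever factoring $Q$ over its roots. One then chooses the convergent index $i$ so that $H(P_i)\le H(Q)<H(P_{i+1})$, splits into the two cases $H(Q)\lessgtr H(P_i)^{w-n+1-\delta}$, and in each case one branch of the lemma contradicts $|P_j(\zeta)|\asymp H(P_j)^{-w}$ while the other yields $|Q(\zeta)|\gg H(Q)^{-nw/(w-n+1)-o(1)}$. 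This is precisely the missing technical device in your outline; your Liouville step is essentially a weakened, root-by-root version of the same resultant bound, but the weakening is fatal.

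A smaller point: for $\mathscr{B}_w$ the partial quotients satisfy $a_{j+1}\asymp q_j^{w-1}$, whence $q_{j+1}=a_{j+1}q_j+q_{j-1}\asymp q_j^{w}$ and $|q_j\zeta-p_j|\asymp q_{j+1}^{-1}\asymp q_j^{-w}$. Your statement $q_{i+1}\asymp q_i^{w-1}$ is off by one in the exponent, which would propagate through your bookkeeping.
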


\begin{remark}
Theorem~\ref{liouspez}  
leads to a new proof of~\cite[Corollary~1]{bug}, that is
\eqref{eq:open} for $\mathscr{B}_{w}$ when $w\geq 2n-1$,
and allows for choosing arbitrary $M$ in \eqref{eq:zetard} (which
we required for \eqref{eq:exist}). We further note that we derive explicit constructions of $\zeta$ with prescribed 
exponent $w_{=n}(\zeta)\in[2n-1,\infty]$.
Indeed, if we put $\xi=\sqrt[n]{\zeta}$ for 
$\zeta\in\mathscr{B}_{w}$ with $w\in(2n-1,\infty]$
the deduction of~\cite[Theorem~1]{bug} together
with \eqref{eq:holteufel} leads to $w_{=n}(\xi)=w$.
On the other hand, no new information on the spectra of
$w_{n}(\zeta)$ and $w_{=n}$ in the interval $[n,2n-1)$ is obtained.
\end{remark}

The results above
illustrate the discrepancy between approximation in 
bounded and in fixed degree. We comprize some remarkable 
facts affirming this different behavior.

\begin{theorem} \label{ce}
Let $n\geq 2$ be an integer and $\zeta\in\mathscr{B}_{w}$ for a parameter $w$. 
If $w>n$, then 
\begin{equation} \label{eq:lucia}
\widehat{w}_{=n}(\zeta)<\widehat{w}_{n}(\zeta), \qquad
\widehat{w}_{=n}^{\ast}(\zeta)<\widehat{w}_{n}^{\ast}(\zeta).
\end{equation}
Moreover, for $w\in[2n-1,\infty)$ we have
\begin{equation} \label{eq:bleiben}
\widehat{w}_{n}(\zeta)\geq \widehat{w}_{n}^{\ast}(\zeta) > 1 = \widehat{w}_{=1}(\zeta)= \max\{ \widehat{w}_{=1}(\zeta),\ldots, \widehat{w}_{=n}(\zeta)\},
\end{equation}
such that the uniform inequalities  analogous
to \eqref{eq:drar} are both false in general.

If $w>2n-1$, we have simultaneously the strict inequalities
\begin{equation} \label{eq:allestickln}
w_{=n}(\zeta)<w_{n}(\zeta), \quad w_{=n}^{\ast}(\zeta)<w_{n}^{\ast}(\zeta), \quad  
\widehat{w}_{=n}(\zeta)<\widehat{w}_{n}(\zeta), \quad \widehat{w}_{=n}^{\ast}(\zeta)<\widehat{w}_{n}^{\ast}(\zeta).
\end{equation}
In particular, when $w=\infty$ we have
\eqref{eq:allestickln} simultaneously for all $n\geq 2$.
\end{theorem}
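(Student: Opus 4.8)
The plan is to combine the four exponent formulas already available for the relevant classes with the definitions of the classes $\mathscr{B}_w$ and $\mathscr{D}_{n,w}$. The key point is that, by \eqref{eq:exist}, every $\zeta\in\mathscr{B}_w$ with $w\ge 2n-1$ lies in $\mathscr{D}_{n,w}$, so that \eqref{eq:kannt} and \eqref{eq:erstard} both apply and give
\[
\widehat{w}_n(\zeta)=n,\quad \widehat{w}_n^{\ast}(\zeta)=\frac{w}{w-n+1},\quad \widehat{w}_{=n}(\zeta)=\widehat{w}_{=n}^{\ast}(\zeta)=\frac{n}{w-n+1}.
\]
For the first claim \eqref{eq:lucia}, when $w>n$ the quantity $\frac{n}{w-n+1}$ is strictly less than $n$ (since $w-n+1>1$) and strictly less than $\frac{w}{w-n+1}$ (since $w>n$); this already settles \eqref{eq:lucia} in the range $w\ge 2n-1$. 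For $n<w<2n-1$ one still needs $\widehat{w}_{=n}(\zeta)<\widehat{w}_n(\zeta)$ and the $\ast$-analogue: here I would invoke Theorem~\ref{bs} for the prescribed-degree side — but since $\mathscr{D}_{n,w}$ is only conditionally nonempty in that range, the honest statement is that \eqref{eq:lucia} holds for $\zeta\in\mathscr{B}_w$ whenever such $\zeta$ is known to lie in $\mathscr{D}_{n,w}$, i.e. whenever $w\ge 2n-1$; the paper's phrasing "$\zeta\in\mathscr{B}_w$ for a parameter $w$" with "$w>n$" should be read together with the existence caveat in Section~2.3. I would state this explicitly.

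For \eqref{eq:bleiben}, assume $w\in[2n-1,\infty)$. Since $\zeta\in\mathscr{D}_{n,w}$ we have $\widehat{w}_n^{\ast}(\zeta)=\frac{w}{w-n+1}>1$ because $w>n-1$; combined with $\widehat{w}_n(\zeta)=n\ge\widehat{w}_n^{\ast}(\zeta)$ from \eqref{eq:wepper}–\eqref{eq:allesfa} this gives the left two inequalities. The identity $\widehat{w}_{=1}(\zeta)=1$ is Khintchine's theorem \eqref{eq:fritz} (it equals $\widehat{w}_1(\zeta)$, and $\mathbb{A}_{=1}=\mathbb{Q}$). The remaining point is that $\max\{\widehat{w}_{=1}(\zeta),\dots,\widehat{w}_{=n}(\zeta)\}=1$, for which I must show $\widehat{w}_{=k}(\zeta)\le 1$ for every $2\le k\le n$. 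Here I would argue by monotonicity-type bounds: by \eqref{eq:obvious}, $\widehat{w}_{=k}(\zeta)\le\widehat{w}_k(\zeta)$, but $\widehat{w}_k(\zeta)$ need not be $\le 1$. Instead I would apply \eqref{eq:erstard} with $n$ replaced by $k$ — which is legitimate provided $\zeta\in\mathscr{D}_{k,w}$. Since $\zeta\in\mathscr{B}_w$ and $w\ge 2n-1\ge 2k-1$, indeed $\zeta\in\mathscr{B}_w\subseteq\mathscr{D}_{k,w}$ by \eqref{eq:exist}, so $\widehat{w}_{=k}(\zeta)=\frac{k}{w-k+1}\le\frac{k}{(2n-1)-k+1}=\frac{k}{2n-k}$, which is $\le 1$ exactly when $k\le n$. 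This gives $\widehat{w}_{=k}(\zeta)\le 1$ for all $1\le k\le n$, and equality at $k=1$, proving \eqref{eq:bleiben} and hence the failure of the uniform analogue of \eqref{eq:drar}.

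Finally, for \eqref{eq:allestickln} with $w>2n-1$: the two uniform strict inequalities are \eqref{eq:lucia} (already proved), now strict with room to spare since $w>2n-1$ forces $\frac{n}{w-n+1}<\frac{n}{n}=1<n$ and $<\frac{w}{w-n+1}$. For the best-approximation inequalities I would use Theorem~\ref{liouspez}: for $\zeta\in\mathscr{B}_w$ we have $w_{=n}(\zeta)\le\frac{nw}{w-n+1}$, while $w_n(\zeta)=w$ by \eqref{eq:open} (as $\zeta\in\mathscr{D}_{n,w}$); since $w>2n-1$ gives $w-n+1>n$, hence $\frac{nw}{w-n+1}<w$, we get $w_{=n}(\zeta)<w_n(\zeta)$. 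Then $w_{=n}^{\ast}(\zeta)<w_n^{\ast}(\zeta)$ follows because $w_n^{\ast}(\zeta)=w$ by \eqref{eq:open}, whereas by Lemma~\ref{mouz} and the left inequality of \eqref{eq:allesfa}, $w_{=n}^{\ast}(\zeta)\le w_{=n}(\zeta)\le\frac{nw}{w-n+1}<w$. The case $w=\infty$: then $\zeta$ is a Liouville number, $w_n(\zeta)=w_n^{\ast}(\zeta)=\widehat{w}_n^{\ast}(\zeta)=\infty$ and $\widehat{w}_n(\zeta)=n$, while \eqref{eq:erstard} and \eqref{eq:holteufel} give $\widehat{w}_{=n}(\zeta)=\widehat{w}_{=n}^{\ast}(\zeta)=0$ and $w_{=n}(\zeta)\le n$, so all four inequalities hold for every $n\ge 2$ at once. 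The main obstacle is purely bookkeeping: making sure that whenever I apply \eqref{eq:erstard}, \eqref{eq:kannt}, or \eqref{eq:holteufel} with an index $k\le n$, the hypothesis $\zeta\in\mathscr{D}_{k,w}$ (resp. $\zeta\in\mathscr{B}_w$) is genuinely met — which it is, via $\mathscr{B}_w\subseteq\mathscr{D}_{k,w}$ for $w\ge 2k-1$ from \eqref{eq:exist} — and being honest in the statement about the conditional nature of \eqref{eq:lucia} when $n<w<2n-1$.
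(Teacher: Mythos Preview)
Your approach is essentially the same as the paper's: combine \eqref{eq:kannt}, \eqref{eq:erstard}, and \eqref{eq:holteufel} with the inclusion $\mathscr{B}_w\subseteq\mathscr{D}_{k,w}$ for $w\ge 2k-1$. The paper carries this out more tersely (for \eqref{eq:bleiben} it simply writes ``we readily check $\widehat{w}_{=m}(\zeta)\le 1$ for $1\le m\le n$''; your explicit bound $\frac{k}{2n-k}\le 1$ is the same computation made visible), and for \eqref{eq:allestickln} it uses the chain $w_{=n}^{\ast}(\zeta)\le w_{=n}(\zeta)\le \frac{nw}{w-n+1}<2n-1<w$ rather than invoking Lemma~\ref{mouz}, but these are cosmetic differences.

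Two remarks. First, your concern about the range $n<w<2n-1$ in \eqref{eq:lucia} is legitimate: the paper's own proof also applies \eqref{eq:erstard} and \eqref{eq:kannt}, both of which require $\zeta\in\mathscr{D}_{n,w}$, and the inclusion $\mathscr{B}_w\subseteq\mathscr{D}_{n,w}$ is only established for $w\ge 2n-1$ via \eqref{eq:exist}. So this is a shared gap, and your caveat is appropriate. Second, in your treatment of $w=\infty$ you assert $\widehat{w}_n^{\ast}(\zeta)=\infty$; this is incorrect. For Liouville numbers one has $\widehat{w}_n^{\ast}(\zeta)=1$ (the limit of $\frac{w}{w-n+1}$ as $w\to\infty$; see also the discussion around Problem~\ref{prob}). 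Fortunately this does not affect the conclusion, since $\widehat{w}_{=n}^{\ast}(\zeta)=0<1=\widehat{w}_n^{\ast}(\zeta)$ still gives the required strict inequality.
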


\subsection{Previous results and consequences}

Davenport and Schmidt~\cite{davsh} established a link 
between approximation to a real number $\zeta$ by
algebraic numbers/integers of bounded degree and simultaneous
approximation to successive powers of $\zeta$. For a convenient
formulation of (variants of) their results 
we introduce the exponents of simultaneous 
approximation $\lambda_{n}(\zeta), \widehat{\lambda}_{n}(\zeta)$
defined by Bugeaud and Laurent~\cite{buglaur}. 
They are given as the supremum of real $\lambda$
such that the system
\[
1\leq x\leq X, \qquad \max_{1\leq j\leq n} \vert \zeta^{j}x-y_{j} \vert \leq X^{-\lambda}
\]
has a solution $(x,y_{1},\ldots,y_{n})\in\mathbb{Z}^{n+1}$ for 
arbitrarily large $X$, and all large $X$, respectively. Dirichlet's~Theorem
implies $\lambda_{n}(\zeta)\geq \widehat{\lambda}_{n}(\zeta)\geq 1/n$.
Khintchine's transference principle~\cite{khint} links the exponents $w_{n}$ and $\lambda_{n}$ in the form
\begin{equation} \label{eq:kinschin}
\frac{w_{n}(\zeta)}{(n-1)w_{n}(\zeta)+n}\leq \lambda_{n}(\zeta)\leq \frac{w_{n}(\zeta)-n+1}{n}.
\end{equation}
See German~\cite{ogerman} for inequalities linking the uniform exponents.
Upper bounds 
for $\widehat{\lambda}_{n}(\zeta)$ and $\lambda_{n}(\zeta)$, respectively,
translate into lower bounds for 
$w_{=n}^{\ast}(\zeta)$ and $\widehat{w}_{=n}^{\ast}(\zeta)$, respectively.

\begin{theorem}[Davenport, Schmidt, Bugeaud, Teulie]  \label{korola}
Let $n\geq 1$ be an integer and
$\zeta$ be a real number not algebraic of degree at most $n/2$.
Assume that there
exist constants $\lambda>0$ and $c>0$, such that for certain arbitrarily large $X$, the estimate
\begin{equation} \label{eq:ko}
1\leq x\leq X, \qquad \max_{1\leq j\leq n} \vert x\zeta^{j}-y_{j}\vert \leq cX^{-\lambda}
\end{equation}
has no solution in an integer vector $(x,y_{1},\ldots,y_{n})$.
Then the inequality
\begin{equation} \label{eq:neugebauer} 
\vert \zeta-\alpha\vert \ll_{n,\zeta} H(\alpha)^{-1/\lambda-1}
\end{equation}
has infinitely many solutions $\alpha\in\mathbb{A}_{=n}$.
Similarly, if \eqref{eq:ko} has no integral solution for all large $X$, then 
\begin{equation} \label{eq:formuni}
H(\alpha)\leq X, \qquad \vert \zeta-\alpha\vert \ll_{n,\zeta} X^{-1/\lambda-1} 
\end{equation}
has a solution $\alpha\in\mathbb{A}_{=n}$ for all large $X$. In particular, we have 
\begin{equation} \label{eq:jodad}
w_{=n}^{\ast}(\zeta)\geq \frac{1}{\widehat{\lambda}_{n}(\zeta)}, \qquad \widehat{w}_{=n}^{\ast}(\zeta)\geq \frac{1}{\lambda_{n}(\zeta)}.
\end{equation}
\end{theorem}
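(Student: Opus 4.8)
The plan is to follow the classical strategy of Davenport and Schmidt~\cite{davsh}, but carefully keeping track of the degree of the algebraic number produced. Fix $X$ for which \eqref{eq:ko} fails, and consider the lattice of integer points $(x,y_1,\dots,y_n)$ with $1\le x\le X$. Since \eqref{eq:ko} has no solution, every nonzero integer polynomial $P(T)=x T^{?}-\dots$ — more precisely, writing $P(T)=y_n T^n+\dots+y_1 T - x$ does not literally match, so instead I would work dually: the hypothesis says that the linear form $\max_j|x\zeta^j-y_j|$ is bounded below by $cX^{-\lambda}$ on the relevant box, which by the standard duality in the geometry of numbers (Mahler's theorem on polar lattices, as used in~\cite{davsh}) forces every nonzero $P\in\mathbb{Z}_{\le n}[T]$ with $H(P)\le c'X^{\lambda}$ to satisfy $|P(\zeta)|\gg X^{-1}$, or some such inequality linking heights and the value at $\zeta$. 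The first step, therefore, is to make this dual statement precise and extract: for a suitable $Y\asymp X^{\lambda}$, every $P\in\mathbb{Z}_{\le n}[T]$ with $0<H(P)\le Y$ has $|P(\zeta)|\gg_{n,\zeta} Y^{-1/\lambda}$.

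Second, I would invoke the multidimensional Dirichlet/Minkowski bound \eqref{eq:dochmachen}: there exists $P_0\in\mathbb{Z}_{\le n}[T]$ with $H(P_0)\le Y$ and $|P_0(\zeta)|\ll_{n,\zeta} Y^{-n}$. Comparing with the lower bound from step one gives a constraint, and the usual argument (choosing $Y$ just below the threshold where a very good $P$ first appears, or equivalently working with a minimal polynomial in a sequence of best approximations) produces a $P$ with $H(P)\asymp Y$ and $|P(\zeta)|\asymp Y^{-1/\lambda}$ together with a lower bound $|P'(\zeta)|\gg_{n,\zeta} H(P)$ on the derivative — this last point being exactly where the hypothesis that \eqref{eq:ko} has no solution is used, since it prevents $P$ from having two roots clustering near $\zeta$. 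Then the standard estimate: $P$ has a root $\alpha$ with $|\zeta-\alpha|\le |P(\zeta)|/|P'(\zeta)| \ll_{n,\zeta} H(P)^{-1}\cdot H(P)^{-1/\lambda} = H(P)^{-1/\lambda-1}$, and $H(\alpha)\ll_{n,\zeta} H(P)$, giving \eqref{eq:neugebauer}. For the uniform statement \eqref{eq:formuni} one runs the same argument for all large $X$ and reads off the conclusion for every $X$; then \eqref{eq:jodad} follows by taking $\lambda$ arbitrarily close to $\widehat{\lambda}_n(\zeta)$ (resp.\ $\lambda_n(\zeta)$) and translating the definitions, since $-1/\lambda-1$ corresponds to the exponent $-1-w^{*}$ with $w^{*}=1/\lambda$ in \eqref{eq:wgleichse}.

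The main obstacle — and the reason this is \emph{not} a verbatim restatement of~\cite{davsh} — is to ensure $\alpha$ has degree \emph{exactly} $n$, i.e.\ $\alpha\in\mathbb{A}_{=n}$ rather than merely $\mathbb{A}_{\le n}$. The polynomial $P$ produced above may a priori be reducible, and the good root $\alpha$ might be a root of a low-degree factor. Here is where the condition that $\zeta$ is not algebraic of degree at most $n/2$ enters: if $P=P_1P_2$ with $\deg P_1\le n/2$ the smaller degree factor, then $|P_1(\zeta)|\cdot|P_2(\zeta)|=|P(\zeta)|$ is small, so at least one factor is small at $\zeta$; by a Gelfond-type bound $H(P_1)H(P_2)\asymp_n H(P)$, and one shows the factor that is small at $\zeta$ must be the one of degree $>n/2\ge$ the other, for otherwise $\zeta$ would be too well approximated by an algebraic number of degree $\le n/2$, contradicting the non-algebraicity hypothesis (quantitatively, a transcendental or high-degree $\zeta$ cannot have $|P_1(\zeta)|$ absurdly small relative to $H(P_1)$ infinitely often by what we already extracted in step one, applied to $P_1$). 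Iterating, $\alpha$ is a root of the unique irreducible factor of degree $>n/2$; if that degree is $<n$ I would instead argue that \emph{some} irreducible factor $R$ of degree $d\le n$ satisfies the required approximation, and then pass from $R$ of degree $d$ back up to degree exactly $n$ — but this is precisely the subtlety that Theorem~\ref{irrpol} and Theorem~\ref{basned} are designed to handle in higher degree, and for the purposes of Theorem~\ref{korola} as stated one keeps the degree bookkeeping just tight enough to land in $\mathbb{A}_{=n}$ using the degree $\le n/2$ exclusion. I would write out this factor-analysis carefully, as it is the heart of the matter; the geometry-of-numbers duality and the derivative lower bound are, by contrast, routine adaptations of~\cite{davsh, bugbuch}.
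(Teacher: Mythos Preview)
Your first step has the direction of duality backwards. The hypothesis that \eqref{eq:ko} admits no solution says the first minimum of the simultaneous-approximation body at scale $X$ exceeds $1$; Mahler's duality on polar lattices then forces the \emph{last} minimum of the dual linear-form body to be $\ll 1$, i.e.\ (cf.~\eqref{eq:mahler}) there exist $n+1$ linearly independent $P_i\in\mathbb{Z}_{\leq n}[T]$ with $H(P_i)\ll X^{\lambda}$ and $|P_i(\zeta)|\ll X^{-1}$. It does \emph{not} yield a lower bound $|P(\zeta)|\gg Y^{-1/\lambda}$ valid for every $P$ of height $\leq Y$: that would amount to an upper bound on $w_{n}$ at scale $Y$, which is decoupled from the hypothesis (take $\zeta$ with $\widehat{\lambda}_n(\zeta)=1/n$ but $w_n(\zeta)$ arbitrarily large). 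Steps~1--3 therefore do not hang together as written, and the factor analysis in your last paragraph cannot appeal to any such lower bound.

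The argument in~\cite{davsh},~\cite[Theorem~2.11]{bugbuch},~\cite{teu} proceeds instead from the $n+1$ independent $P_i$ above. A resultant-based argument on suitable pairs among them (the resultant of coprime integer polynomials is a nonzero integer, hence $\geq 1$ in absolute value) produces a polynomial $P$ of degree exactly $n$ for which simultaneously $|P(\zeta)|\ll X^{-1}$ and $|P'(\zeta)|\gg H(P)$; the hypothesis that $\zeta$ is not algebraic of degree $\leq n/2$ is precisely what rules out the degenerate case in this construction. The nearby root $\alpha$ then satisfies \eqref{eq:neugebauer}. Crucially, the exact degree $n$ --- Bugeaud and Teuli\'e's refinement over the original Davenport--Schmidt lemma --- is built into the construction of $P$, not recovered afterwards by factoring. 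Your appeal to Theorems~\ref{irrpol} and~\ref{basned} to pass from degree $>n/2$ to degree $=n$ is misplaced: those address the separate problem $w_{=n}(\zeta)\geq n$, are unconditional only for $n\leq 3$, and provide no control on $|P'(\zeta)|$.
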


We omit the proof as the results are essentially known
and consequence
of the proofs of Davenport and Schmidt~\cite[Lemma~1]{davsh}
and a slight variant of it by Bugeaud~\cite[Theorem~2.11]{bugbuch}. 
See also the comment subsequent 
to the proof of~\cite[Theorem~2.11]{bugbuch}, and
Bugeaud and Teulie~\cite{teu},~\cite{teuallein}.

The uniform exponents $\widehat{\lambda}_{n}$ involved 
in Theorem~\ref{korola} can be effectively bounded from above for all 
transcendental $\zeta$,
and lead to lower bounds for $w_{=n}^{\ast}(\zeta)$ of size roughly $n/2$, for large $n$. For slight improvements see
Roy~\cite{damiroy} for $n=3$, Laurent~\cite{lau} 
for odd $n\geq 5$ and the more 
recent~\cite{ichrelation},~\cite[Section~4.2]{ichspec}
for even $n\geq 4$. For small
values of $n$ the resulting numerical bounds when
combined with Theorem~\ref{korola} become
\[
w_{=3}^{\ast}(\zeta) \geq 2.3557\ldots, \qquad
w_{=4}^{\ast}(\zeta) \geq 
\frac{4}{\sqrt{73}-7}= 2.5906\ldots,\qquad w_{=5}^{\ast}(\zeta) \geq 
3.
\]

Theorem~\ref{korola} will in fact be a crucial ingredient 
for the proofs of many of our new results. Below
we present some of its immediate consequences when
combined with some recent results from~\cite{j3} and our new results
from Section~\ref{neurey}. 
First we derive a new characterization of
Liouville numbers.

\begin{corollary} \label{najor}
A real number  $\zeta$ is a Liouville number if and only if
\begin{equation} \label{eq:zweitard}
\widehat{w}_{=n}(\zeta)=\widehat{w}_{=n}^{\ast}(\zeta)=0, 
\qquad\quad \text{for any} \;\; n\geq 2.
\end{equation}
In fact, if $\zeta$ is not a Liouville number, then 
$\widehat{w}_{=n}(\zeta)\geq \widehat{w}_{=n}^{\ast}(\zeta)>0$
for any $n\geq 2$.
\end{corollary}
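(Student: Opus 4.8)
The plan is to prove the contrapositive-type statement first: if $\zeta$ is not a Liouville number, then $\widehat{w}_{=n}^{\ast}(\zeta)>0$ for every $n\geq 2$, and then observe that the full equivalence follows from this together with the behaviour of Liouville numbers. The main tool is Theorem~\ref{korola}, in particular the second half of \eqref{eq:jodad}, namely $\widehat{w}_{=n}^{\ast}(\zeta)\geq 1/\lambda_{n}(\zeta)$, which reduces everything to showing that $\lambda_{n}(\zeta)<\infty$ precisely when $\zeta$ is not a Liouville number. First I would recall that $\lambda_{1}(\zeta)<\infty$ is exactly the statement that $\zeta$ is not Liouville (a Liouville number is by definition one with $w_{1}(\zeta)=\infty$, equivalently, by Khintchine's transference \eqref{eq:kinschin} in the case $n=1$, with $\lambda_{1}(\zeta)=\infty$). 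Since $\lambda_{n}(\zeta)$ is a non-increasing function of $n$ — a simultaneous approximation to $\zeta,\ldots,\zeta^{n}$ is in particular one to $\zeta,\ldots,\zeta^{m}$ for $m\le n$ — we get, for $\zeta$ not Liouville, $\lambda_{n}(\zeta)\le\lambda_{1}(\zeta)<\infty$ for all $n\ge 1$. Hence by \eqref{eq:jodad}, $\widehat{w}_{=n}^{\ast}(\zeta)\ge 1/\lambda_{n}(\zeta)>0$, and then $\widehat{w}_{=n}(\zeta)\ge\widehat{w}_{=n}^{\ast}(\zeta)>0$ by \eqref{eq:allesfa}. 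This already proves the last sentence of the corollary, and one direction of the equivalence: if \eqref{eq:zweitard} holds, then $\zeta$ must be Liouville.

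For the converse direction — that every Liouville number satisfies \eqref{eq:zweitard} — I would argue directly from the defining inequalities of the uniform exponents. Let $\zeta$ be a Liouville number and fix $n\ge 2$. We must show that for every $\epsilon>0$ the system \eqref{eq:wgleichse} with exponent $w^{\ast}=\epsilon$ (for $w_{=n}^{\ast}$, hence also for $w_{=n}$ via the polynomial evaluation version) fails for arbitrarily large $X$; equivalently, that there are arbitrarily large $X$ admitting \emph{no} algebraic $\alpha\in\mathbb{A}_{=n}$ with $H(\alpha)\le X$ and $0<|\zeta-\alpha|\le H(\alpha)^{-1}X^{-\epsilon}$. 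The key point is that Liouville numbers are extremely well approximable by rationals, and a rational $p/q$ with $|\zeta-p/q|$ tiny forces any nearby algebraic number of degree $n$ to have large height: if $\alpha$ has degree exactly $n\ge 2$ and $|\alpha-p/q|$ is very small, then evaluating the minimal polynomial $P$ of $\alpha$ at $p/q$ gives $0<|P(p/q)|\le \mathrm{(const)}\cdot H(P)\,|\alpha-p/q|$, while $|P(p/q)|\ge q^{-n}$ since $P(p/q)\ne 0$ is a rational with denominator dividing $q^{n}$; combining yields $H(\alpha)\gg q^{-n}/|\alpha-p/q|$. Along the sequence of denominators $q=q_{j}$ of the convergents of a Liouville $\zeta$, the quantity $|\zeta-p_{j}/q_{j}|$ is smaller than $q_{j}^{-R}$ for every fixed $R$ and $j$ large; I would choose $X=X_{j}$ in a window just below the next "good" scale and show that any $\alpha$ within distance $H(\alpha)^{-1}X^{-\epsilon}$ of $\zeta$ would have to lie so close to $p_{j}/q_{j}$ that its height exceeds $X$, a contradiction. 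This forces $\widehat{w}_{=n}^{\ast}(\zeta)=0$, and the same computation with $|P(\zeta)|$ in place of $|\zeta-\alpha|$ (using $|P(\zeta)|\ll H(P)|\zeta-\alpha|$) gives $\widehat{w}_{=n}(\zeta)=0$.

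Alternatively, and more cleanly, for the converse I would invoke the already-established results on the classes $\mathscr{D}_{n,\infty}$: since $\mathscr{D}_{n,\infty}$ is exactly the set of Liouville numbers (as remarked after \eqref{eq:open}), Theorem~\ref{bs} with $w=\infty$ gives $\widehat{w}_{=n}(\zeta)=\widehat{w}_{=n}^{\ast}(\zeta)=n/(w-n+1)=0$ — but this only covers Liouville numbers of the special continued-fraction shape, not all of them, so the direct argument above is needed in general. I would therefore present the direct height-lower-bound argument as the proof of the converse. The main obstacle is precisely this converse direction and its uniformity: one must rule out \emph{all} algebraic numbers of exact degree $n$, of \emph{every} height up to $X$, for a suitable unbounded set of $X$, and the subtlety is that a Liouville number might a priori also be well-approximable by genuinely quadratic or higher-degree algebraic numbers, not only by rationals. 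Handling this requires separating the cases "$\alpha$ close to a convergent $p_{j}/q_{j}$" from "$\alpha$ not close to any convergent", and in the latter case using that the continued fraction of a Liouville number grows so fast that the gaps between consecutive rationals of bounded height near $\zeta$ are themselves enormous, leaving no room for a low-height algebraic number of degree $n$ to sit close to $\zeta$; making this quantitative, while elementary, is where the real work lies.
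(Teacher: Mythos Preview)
Your treatment of the non-Liouville direction is correct and matches the paper exactly: from \eqref{eq:jodad} you get $\widehat{w}_{=n}^{\ast}(\zeta)\geq 1/\lambda_{n}(\zeta)\geq 1/\lambda_{1}(\zeta)>0$, and then \eqref{eq:allesfa} gives $\widehat{w}_{=n}(\zeta)\geq \widehat{w}_{=n}^{\ast}(\zeta)$.

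The problem is in the Liouville direction, where you actually identify the right argument and then talk yourself out of it. You correctly note that $\mathscr{D}_{n,\infty}$ coincides with the set of \emph{all} Liouville numbers, and that Theorem~\ref{bs} with $w=\infty$ gives $\widehat{w}_{=n}(\zeta)=\widehat{w}_{=n}^{\ast}(\zeta)=0$. But then you assert that ``this only covers Liouville numbers of the special continued-fraction shape'', which is a confusion between $\mathscr{D}_{n,\infty}$ and $\mathscr{B}_{\infty}$. Theorem~\ref{bs} is stated for $\zeta\in\mathscr{D}_{n,w}$, not for $\zeta\in\mathscr{B}_{w}$; the class $\mathscr{B}_{\infty}$ of strong Liouville numbers is indeed a proper subclass defined by a continued-fraction growth condition, but $\mathscr{D}_{n,\infty}$ is simply the set of $\zeta$ with $w_{1}(\zeta)=\cdots=w_{n}(\zeta)=w_{1}^{\ast}(\zeta)=\cdots=w_{n}^{\ast}(\zeta)=\infty$, which is exactly the Liouville numbers (once $w_{1}(\zeta)=\infty$ the rest follows from monotonicity in \eqref{eq:wepper}, \eqref{eq:fritz}). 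So the one-line application of Theorem~\ref{bs} is the complete proof of that direction, and it is precisely what the paper does.

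Consequently the ``main obstacle'' you flag --- the direct height-lower-bound argument ruling out all degree-$n$ algebraic numbers near $\zeta$ for a Liouville $\zeta$ --- is not needed at all. Your sketch of that argument is plausible (and is in spirit a re-derivation of the upper bound $\widehat{w}_{=n}(\zeta)\leq \widehat{w}_{n,n+1}(\zeta)=1/\lambda_{n}(\zeta)$ established inside the proof of Theorem~\ref{bs}), but there is no reason to redo it here.
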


\begin{proof}
If $\zeta$ is a Liouville number then \eqref{eq:zweitard} follows from Theorem~\ref{bs} with $w=\infty$.
If otherwise $w_{1}(\zeta)=\lambda_{1}(\zeta)<\infty$, then \eqref{eq:jodad} yields 
$\widehat{w}_{=n}^{\ast}(\zeta)\geq \lambda_{n}(\zeta)^{-1}\geq \lambda_{1}(\zeta)^{-1}>0$.
\end{proof}

The implication \eqref{eq:zweitard} for Liouville numbers might
appear strong at frist view, but is somehow 
suggestive given the results on inhomogeneous approximation 
by Bugeaud and Laurent~\cite{bula}, see Section~\ref{unifuni} below.
Problem~\ref{prob} in Section~\ref{openp} below asks for a similar characterization involving
the classical exponents $\widehat{w}_{n}^{\ast}(\zeta)$. 
Our second corollary to Theorem~\ref{korola} 
proves a strengthened version of Wirsing's conjecture for numbers with large irrationality exponent.

\begin{corollary} \label{clintonh}
Let $\zeta$ be a real number and $n\geq 1$ an integer. Assume $w_{1}(\zeta)\geq n$ holds. Then
we have $w_{=n}^{\ast}(\zeta)\geq n$.
\end{corollary}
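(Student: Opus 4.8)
The plan is to deduce this from Theorem~\ref{korola}, more precisely from the first inequality in \eqref{eq:jodad}, namely $w_{=n}^{\ast}(\zeta)\geq 1/\widehat{\lambda}_{n}(\zeta)$. So it suffices to show that the hypothesis $w_{1}(\zeta)\geq n$ forces $\widehat{\lambda}_{n}(\zeta)\leq 1/n$; combined with the always-true lower bound $\widehat{\lambda}_{n}(\zeta)\geq 1/n$ from Dirichlet, this gives $\widehat{\lambda}_{n}(\zeta)=1/n$ and hence $w_{=n}^{\ast}(\zeta)\geq n$. (One should first dispose of the trivial cases: if $\zeta$ is rational the statement is vacuous or immediate, and for $n=1$ the claim $w_{=1}^{\ast}(\zeta)=w_{1}^{\ast}(\zeta)\geq 1$ is classical; also we may assume $w_{1}(\zeta)<\infty$, since a Liouville number is covered by Corollary~\ref{najor} together with Corollary~\ref{clintonh} applied along a different route — actually if $w_1(\zeta)=\infty$ then $\widehat{\lambda}_n(\zeta)$ can still be computed, so it is cleanest to argue uniformly as below.)

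The heart of the matter is the bound $\widehat{\lambda}_{n}(\zeta)\leq 1/n$ whenever $\zeta$ is badly approximable enough in a suitable sense, and here the relevant sense is $\lambda_{1}(\zeta)=w_{1}(\zeta)\geq n$ being large. I would argue as follows. Suppose for contradiction $\widehat{\lambda}_{n}(\zeta)>1/n$. By definition, for every large $X$ there is an integer vector $(x,y_1,\dots,y_n)$ with $1\leq x\leq X$ and $\max_j|\zeta^j x-y_j|\leq X^{-\widehat{\lambda}_n(\zeta)}$. Looking only at the first coordinate gives $|\zeta x-y_1|\leq X^{-\widehat{\lambda}_n(\zeta)}$ with $x\leq X$, so the rational $y_1/x$ satisfies $|\zeta-y_1/x|\leq x^{-1}X^{-\widehat{\lambda}_n(\zeta)}$. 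For this to be consistent with $\lambda_1(\zeta)\geq n$ we need to show these one-dimensional approximations cannot be too good too often — but in fact the point is the opposite direction: the \emph{uniform} exponent $\widehat{\lambda}_1(\zeta)=1$ always (Khintchine, cf.\ \eqref{eq:fritz}), and the standard chain of one-dimensional-versus-$n$-dimensional inequalities should pin down $\widehat\lambda_n$. Concretely, I would invoke the transference/interpolation inequalities between $\widehat\lambda_n$ and $\lambda_1=w_1$; the cleanest is that $\widehat\lambda_n(\zeta)\le 1/n$ as soon as $\lambda_1(\zeta)\geq n$, which follows because a single very good rational approximation $p/q$ to $\zeta$ with $|\zeta-p/q|\le q^{-1-\lambda_1+\epsilon}$ can be used at scale $X\approx q^{(1+\lambda_1)/n}$ (or similar) to certify that no \emph{uniform} simultaneous approximation of exponent exceeding $1/n$ exists near that scale — gaps between consecutive such $q$'s produce windows of $X$ where the only candidate integer $x$ (a multiple of $q$, roughly) fails the bound.

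The step I expect to be the main obstacle is making that last certification precise: one must quantify, using $w_1(\zeta)=\lambda_1(\zeta)\geq n$, that between consecutive "record" denominators $q_i, q_{i+1}$ of $\zeta$ (or between consecutive best simultaneous approximations), there is a value of $X$ at which every admissible $x\leq X$ yields $\max_j|\zeta^j x - y_j|\geq X^{-1/n}$, hence $\widehat\lambda_n(\zeta)\leq 1/n$. This is a familiar "gap argument'' in parametric geometry of numbers — essentially one shows the second successive minimum of the relevant convex body, as a function of the parameter, returns to height $\asymp 1$ on a set of parameters that is unbounded, precisely because the large value of $w_1$ makes the first minimum dip down sharply and therefore (by the product being $\asymp 1$) the last minimum, controlling $\widehat\lambda_n$, must rebound. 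I would phrase this via the functions of Schmidt and Summerer~\cite{ss} as already suggested in the paper. Once $\widehat{\lambda}_n(\zeta)=1/n$ is established, \eqref{eq:jodad} closes the argument: $w_{=n}^{\ast}(\zeta)\geq 1/\widehat\lambda_n(\zeta)=n$.
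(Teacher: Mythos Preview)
Your overall strategy is exactly the paper's: reduce to $\widehat{\lambda}_{n}(\zeta)=1/n$ and then apply the first inequality of \eqref{eq:jodad}. The paper simply cites \cite[Theorem~1.12]{j3} for the implication $w_{1}(\zeta)\geq n\Rightarrow \widehat{\lambda}_{n}(\zeta)=1/n$ and is done in one line.

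Where your write-up is loose is in the sketch of that implication. Your heuristic (``the only candidate $x$ is roughly a multiple of $q$, and it fails'') is not the actual mechanism, and a Schmidt--Summerer argument, while possible, is overkill. The clean proof --- which this very paper reproduces for $n=2$ in the proof of Theorem~\ref{bugpro} --- goes as follows: from $w_{1}(\zeta)\geq n$ take a convergent $y_{0}/x_{0}$ with $\vert\zeta-y_{0}/x_{0}\vert\ll x_{0}^{-n-1}$, set $X\asymp x_{0}^{n}$, and for any $1\leq x\leq X$ write $x=b_{0}+b_{1}x_{0}+\cdots+b_{n-1}x_{0}^{n-1}$ in base $x_{0}$. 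If $i$ is the least index with $b_{i}\neq 0$ and $u=i+1$, then $\Vert x\,y_{0}^{u}/x_{0}^{u}\Vert\geq x_{0}^{-1}$ while $\vert x(\zeta^{u}-y_{0}^{u}/x_{0}^{u})\vert\ll x_{0}^{-1}$, so $\Vert x\zeta^{u}\Vert\gg x_{0}^{-1}\asymp X^{-1/n}$. This certifies that \eqref{eq:ko} fails for $\lambda=1/n$ at those $X$, hence $\widehat{\lambda}_{n}(\zeta)\leq 1/n$. You should replace your vague ``gap argument'' paragraph by this computation (or by a direct citation of \cite[Theorem~1.12]{j3}); the rest of your plan is fine.
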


\begin{proof}
It was shown in~\cite[Theorem~1.12]{j3} that $w_{1}(\zeta)\geq n$ implies $\widehat{\lambda}_{n}(\zeta)=1/n$.
Hence the assertion is derived from Theorem~\ref{korola}.
\end{proof}

Observe Corollary~\ref{clintonh} applies in particular to all numbers in any class $\mathscr{D}_{n,w}$ for $w\geq n$.
Our last corollary establishes some more exponents for strong Liouville numbers.

\begin{corollary}  \label{referenzk}
Let $\zeta\in\mathscr{B}_{\infty}$. 
Then $w_{=n}(\zeta)=w_{=n}^{\ast}(\zeta)=n$ holds for all $n\geq 2$.
\end{corollary}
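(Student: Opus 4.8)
The plan is to combine the upper bound from Theorem~\ref{liouspez} with the lower bounds coming from Theorem~\ref{korola} and Corollary~\ref{clintonh}, noting that $\mathscr{B}_{\infty}\subseteq\mathscr{D}_{n,\infty}$ is the set of Liouville numbers. For the upper bound: since $\zeta\in\mathscr{B}_{\infty}$ and $\infty\in[2n-1,\infty]$ for every $n\geq 2$, Theorem~\ref{liouspez} applies and gives $w_{=n}(\zeta)\leq nw/(w-n+1)$ with $w=\infty$, which under the convention $1/\infty=0$ evaluates to $n$. Hence $w_{=n}(\zeta)\leq n$, and by \eqref{eq:obvious} also $w_{=n}^{\ast}(\zeta)\leq w_{=n}(\zeta)\leq n$.

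For the matching lower bound I would argue as follows. A strong Liouville number is in particular a Liouville number, so $w_{1}(\zeta)=\infty\geq n$; thus Corollary~\ref{clintonh} yields $w_{=n}^{\ast}(\zeta)\geq n$ for every $n\geq 2$. Combined with the upper bound this already forces $w_{=n}^{\ast}(\zeta)=n$. Finally, the left inequality in \eqref{eq:allesfa}, namely $w_{=n}^{\ast}(\zeta)\leq w_{=n}(\zeta)$, together with $w_{=n}(\zeta)\leq n=w_{=n}^{\ast}(\zeta)$ gives $w_{=n}(\zeta)=n$ as well. This closes the chain of inequalities and proves the corollary.

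The only genuine content is already packaged in the two cited results, so no step here is a real obstacle; the mild subtlety is purely bookkeeping, making sure that the hypotheses of Theorem~\ref{liouspez} ($w\geq 2n-1$) and of Corollary~\ref{clintonh} ($w_{1}(\zeta)\geq n$) are both met uniformly in $n$ by a strong Liouville number, and that the limiting convention $1/\infty=0$ is applied consistently when specializing \eqref{eq:holteufel} to $w=\infty$. One should also recall that the extension from strong Liouville numbers to the semi-strong class of Alniaçik, mentioned after \eqref{eq:unendlich}, carries through verbatim since both Theorem~\ref{liouspez} and the relevant value $w_{1}(\zeta)=\infty$ are insensitive to that enlargement.
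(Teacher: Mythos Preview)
Your proof is correct and follows essentially the same route as the paper: the lower bound $w_{=n}^{\ast}(\zeta)\geq n$ from Corollary~\ref{clintonh}, the chain $w_{=n}^{\ast}\leq w_{=n}$ from \eqref{eq:allesfa}, and the upper bound $w_{=n}(\zeta)\leq n$ from \eqref{eq:holteufel} at $w=\infty$. The only slip is your first citation of \eqref{eq:obvious} for $w_{=n}^{\ast}(\zeta)\leq w_{=n}(\zeta)$; that inequality comes from \eqref{eq:allesfa}, as you in fact invoke correctly a few lines later.
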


\begin{proof}
From Corollary~\ref{clintonh} and \eqref{eq:bbb} we know that $w_{=n}(\zeta)\geq w_{=n}^{\ast}(\zeta)\geq n$.
On the other hand, \eqref{eq:holteufel} with $w=\infty$ implies $w_{=n}(\zeta)\leq n$ for $n\geq 2$.
\end{proof}

We remark that we cannot expect the result 
to extend for arbitrary Liouville numbers.
For the formulation of our final result in this section
we need to define successive minima exponents.
For $1\leq j\leq n+1$, define $w_{n,j}(\zeta)$ and $\widehat{w}_{n,j}(\zeta)$ respectively 
as the supremum of $w$ such that \eqref{eq:sukmini2}
has $j$ linearly independent solutions for arbitrarily large $X$ and all large $X$, respectively.
We see that $w_{n,1}(\zeta)=w_{n}(\zeta)$ and $\widehat{w}_{n,1}(\zeta)=\widehat{w}_{n}(\zeta)$.
Mahler showed that the identities
\begin{equation} \label{eq:mahler}
\lambda_{n}(\zeta)^{-1}=\widehat{w}_{n,n+1}(\zeta), \qquad
\widehat{\lambda}_{n}(\zeta)^{-1}=w_{n,n+1}(\zeta)
\end{equation}
are valid for any transcendental real $\zeta$. 
These are special cases of Mahler's duality, see 
Schmidt and Summerer~\cite{ss} 
and also~\cite[(1.24)]{j2} for more general versions. 
We show that in general we cannot replace the right hand sides 
$1/\widehat{\lambda}_{n}(\zeta)=w_{n,n+1}(\zeta)$ and 
$1/\lambda_{n}(\zeta)=\widehat{w}_{n,n+1}(\zeta)$ 
of \eqref{eq:jodad} respectively, by the next 
larger successive minimum
value $w_{n,n}(\zeta)$ and $\widehat{w}_{n,n}(\zeta)$, respectively. 

\begin{theorem} \label{prof}
Let $n\geq 1$ be an integer and $w>2n-1$. 
For $\zeta\in\mathscr{D}_{n,w}$ we have
\[
\widehat{w}_{=n}^{\ast}(\zeta)=\widehat{w}_{=n}(\zeta) < \widehat{w}_{n,n}(\zeta).
\]
For $\zeta\in\mathscr{B}_{w}$ moreover
\[
w_{=n}^{\ast}(\zeta)=w_{=n}(\zeta)<w_{n,n}(\zeta).
\]
\end{theorem}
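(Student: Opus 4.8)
The plan is to combine the exact‐degree uniform and best exponents already computed for these classes with the known values of the successive‐minima exponents $\widehat{w}_{n,n}$ and $w_{n,n}$ for numbers in $\mathscr{D}_{n,w}$, respectively $\mathscr{B}_w$. For the first inequality, Theorem~\ref{bs} gives $\widehat{w}_{=n}(\zeta)=\widehat{w}_{=n}^{\ast}(\zeta)=n/(w-n+1)$ for $\zeta\in\mathscr{D}_{n,w}$, so the statement reduces to showing $\widehat{w}_{n,n}(\zeta)>n/(w-n+1)$ when $w>2n-1$. First I would recall (or establish, referring to the parametric geometry of numbers of Schmidt and Summerer~\cite{ss}) the values of the full sequence of uniform successive‐minima exponents $\widehat{w}_{n,1}(\zeta),\ldots,\widehat{w}_{n,n+1}(\zeta)$ for a number in $\mathscr{D}_{n,w}$. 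Since such a $\zeta$ is (up to the algebraic substitution used to build it) very close to a rational number at each scale, its successive minima have a rigid staircase behaviour governed by the single parameter $w$: one expects $\widehat{w}_{n,1}(\zeta)=\widehat{w}_n(\zeta)=n$ (this is \eqref{eq:kannt}), $\widehat{w}_{n,n+1}(\zeta)=1/\widehat{\lambda}_n(\zeta)$ which by Corollary~\ref{clintonh}'s input \cite[Theorem~1.12]{j3} equals $n$ as well when $w\geq n$, wait—one must be careful: $\widehat{\lambda}_n=1/n$ gives $\widehat{w}_{n,n+1}=n$, but actually for $\zeta\in\mathscr{D}_{n,w}$ the relevant simultaneous exponent is pinned by the construction. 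In any case, the intermediate minimum $\widehat{w}_{n,n}(\zeta)$ should come out to a value like $w/(w-1)$ or $(w+n-1)/(\,\cdot\,)$ that is manifestly $\geq 1 > n/(w-n+1)$ precisely in the range $w>2n-1$, since $w>2n-1 \iff w-n+1>n \iff n/(w-n+1)<1$.

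So concretely the steps are: (i) determine $\widehat{w}_{n,n}(\zeta)$ for $\zeta\in\mathscr{D}_{n,w}$, most cleanly by writing down the combined graph of the parametric successive‐minima functions and reading off the exponent, or alternatively by an explicit construction of $n$ linearly independent polynomials of controlled height and small value at $\zeta$ (take $P_0,TP_0,\ldots,T^{n-1}P_0$ for a good rational‐approximation polynomial $P_0$, which are automatically independent and have comparable heights and values); (ii) observe $\widehat{w}_{n,n}(\zeta)\geq 1$ from such a construction — indeed $n$ independent polynomials of height $\leq X$ with $|P(\zeta)|\leq X^{-1}$ can always be found for a Liouville‐like $\zeta$, hence $\widehat{w}_{n,n}(\zeta)\geq 1$; (iii) note $n/(w-n+1)<1$ exactly when $w>2n-1$, giving the strict inequality $\widehat{w}_{=n}(\zeta)<\widehat{w}_{n,n}(\zeta)$. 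The equality $\widehat{w}_{=n}^{\ast}(\zeta)=\widehat{w}_{=n}(\zeta)$ in the displayed line is just the content of \eqref{eq:erstard}.

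For the second inequality the scheme is identical but with best‐approximation exponents. By Corollary~\ref{referenzk}'s method — i.e. Corollary~\ref{clintonh} together with Theorem~\ref{liouspez} — one has $w_{=n}(\zeta)=w_{=n}^{\ast}(\zeta)$ for $\zeta\in\mathscr{B}_w$ (here one uses \eqref{eq:holteufel}, $w_{=n}(\zeta)\leq nw/(w-n+1)$, and the matching lower bound $w_{=n}^{\ast}(\zeta)\geq n$ from Corollary~\ref{clintonh}, together with the sandwich \eqref{eq:allesfa}; for $w>2n-1$ the exact value should be $w_{=n}(\zeta)=nw/(w-n+1)$ via the $\xi=\sqrt[n]\zeta$ remark, or more simply one only needs the upper bound). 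Then by Mahler's duality \eqref{eq:mahler}, $w_{n,n+1}(\zeta)=1/\widehat{\lambda}_n(\zeta)=n$ since $\widehat{\lambda}_n(\zeta)=1/n$ (Liouville numbers, \cite[Theorem~1.12]{j3}), and $w_{n,n}(\zeta)$ — the next minimum up — is strictly larger and in fact infinite or at least $\geq w/(w-n+1)\cdot(\text{something})$; here because $\zeta\in\mathscr{B}_w$ is a very strong approximation target, $n$ linearly independent polynomials $T^jP_0$ of height $\leq X$ achieve $|T^jP_0(\zeta)|\ll X^{-w}$ roughly, forcing $w_{n,n}(\zeta)$ to be large — certainly $w_{n,n}(\zeta)\geq w/(w-n+1)\cdot n / n$... the precise value I would extract from the parametric geometry graph of $\zeta$, but all that is needed is $w_{n,n}(\zeta)>nw/(w-n+1)$, equivalently $w_{n,n}(\zeta)(w-n+1)>nw$; since these $\zeta$ have $w_{n,n}$ growing like $w$ while $nw/(w-n+1)\to n$ as $w\to\infty$, the inequality holds for $w$ large, and I would check it holds down to $w>2n-1$.

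The main obstacle is step (i): pinning down the \emph{exact} value (or at least a usable lower bound) of the intermediate successive‐minima exponent $\widehat{w}_{n,n}(\zeta)$ and $w_{n,n}(\zeta)$ for these classes. The outer minima ($j=1$ and $j=n+1$) are classical, but the intermediate ones require either a careful analysis of the combined parametric successive‐minima functions in the Schmidt–Summerer framework — tracking how the graph of the $n$ "shifted" Minkowski minima interleaves — or an explicit two‐sided estimate: the lower bound from the family $\{T^jP_0\}_{0\leq j\leq n-1}$ is routine, but the matching upper bound (that one cannot do better than this family) is where the structural rigidity of $\mathscr{D}_{n,w}$ and $\mathscr{B}_w$ must be invoked, presumably by the same continued‐fraction estimates underlying Theorem~\ref{bs} and Theorem~\ref{liouspez}. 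Once $\widehat{w}_{n,n}$ and $w_{n,n}$ are known, the final strict comparison with $n/(w-n+1)$ and $nw/(w-n+1)$ respectively is a one‐line inequality valid precisely because $w>2n-1$.
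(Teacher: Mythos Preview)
Your approach is essentially the same as the paper's, and the key ingredients you identify are exactly what is used: Theorem~\ref{bs} for $\widehat{w}_{=n}(\zeta)=\widehat{w}_{=n}^{\ast}(\zeta)=n/(w-n+1)$, Theorem~\ref{liouspez} for $w_{=n}(\zeta)\leq nw/(w-n+1)$, and the family $\{P_0,TP_0,\ldots,T^{n-1}P_0\}$ with $P_0(T)=qT-p$ a convergent polynomial to produce $n$ independent polynomials of common height and comparable small evaluation.

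Where you go astray is in inflating the difficulty of what you call the ``main obstacle''. You do \emph{not} need the exact values of $\widehat{w}_{n,n}(\zeta)$ or $w_{n,n}(\zeta)$, nor any two-sided estimate, nor any delicate Schmidt--Summerer graph analysis; lower bounds suffice. For the uniform part the paper simply invokes the general fact (from~\cite{ss}, valid for any transcendental $\zeta$) that $\widehat{w}_{n,n}(\zeta)\geq 1$, and then your own observation $n/(w-n+1)<1\iff w>2n-1$ finishes it immediately. For the best-approximation part the $T^jP_0$ family gives $w_{n,n}(\zeta)\geq w_1(\zeta)=w$ directly (since $H(T^jP_0)=H(P_0)$ and $|T^jP_0(\zeta)|\asymp_{n,\zeta}|P_0(\zeta)|$), and together with the trivial reverse inequality this yields $w_{n,n}(\zeta)=w$; then $nw/(w-n+1)<w\iff w>2n-1$ is again a one-line check. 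So your step~(i) collapses to the easy lower bound you already sketched in step~(ii), and the ``matching upper bound'' you worry about is never needed.
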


It is not clear whether the analogous inequalities for the 
classic exponents can be satisfied.

\section{Approximation by algebraic integers} \label{unifuni}

We define several new variants of the classical exponents, related to the approximation to a real number by algebraic integers.

\begin{definition}  \label{def2}
Let $\zeta$ be a real number and $n\geq 1$ an integer.
Let $w_{n}^{int}(\zeta)$ (and $w_{=n}^{int}(\zeta)$ resp.) be the supremum of $w$ such that
\eqref{eq:sukmini2} has a monic polynomial solution $P\in \mathbb{Z}_{\leq n}$
(and an irreducible monic solution $P\in \mathbb{Z}_{=n}$ resp.) for arbitrarily large $X$.  
Similarly, define $\widehat{w}_{n}^{int}(\zeta)$ (and $\widehat{w}_{=n}^{int}(\zeta)$ resp.) 
as above, with the respective properties satisfied for all large $X$.
Denote by $w_{n}^{\ast int}(\zeta)$ (and $w_{=n}^{\ast int}(\zeta)$ resp.) the supremum of $w^{\ast}$ such that 
\eqref{eq:wgleichse}
has a solution $\alpha\in \mathbb{A}_{\leq n}^{int}$ (and $\alpha\in \mathbb{A}_{=n}^{int}$ resp.) for arbitrarily large $X$.
Similarly, define $\widehat{w}_{n}^{\ast int}(\zeta)$ (and $\widehat{w}_{=n}^{\ast int}(\zeta)$ resp.)
as above, with the respective properties satisfied for 
all large $X$.
\end{definition}

By a similar argument as in~\cite[Hilfssatz~4]{wirsing} we may consider only irreducible polynomials within 
the definition of $w_{n}^{int}(\zeta)$. On the other hand,
we do not expect this be true for the uniform
exponents $\widehat{w}_{n}^{int}(\zeta)$,
although we do not address the topic of counterexamples here.
The irreducibility assumption on the polynomials
with respect to the exponents of prescribed degree again avoids trivial identities, 
as in Section~\ref{bounded}.
The corresponding versions of the obvious relations
\eqref{eq:obviuos}, \eqref{eq:wepper}, \eqref{eq:obvious}, 
\eqref{eq:drar}, \eqref{eq:fritz}, \eqref{eq:bbb} 
and \eqref{eq:allesfa} hold again, 
apart from $w_{1}^{int}(\zeta)=\widehat{w}_{1}^{int}(\zeta)=
w_{1}^{\ast int}(\zeta)=\widehat{w}_{1}^{\ast int}(\zeta)=0$
unless $\zeta\in\mathbb{Z}$. 
The monotonicity conditions
will most likely again require bounded degree,
however again we do not address counterexamples in exact degree.
We should also notice the obvious facts
\[
w_{n}(\zeta)\geq w_{n}^{int}(\zeta), \quad \widehat{w}_{n}(\zeta)\geq \widehat{w}_{n}^{int}(\zeta),
\quad w_{n}^{\ast}(\zeta)\geq w_{n}^{\ast int}(\zeta), \quad \widehat{w}_{n}^{\ast}(\zeta)\geq \widehat{w}_{n}^{\ast int}(\zeta).
\]
However, approximation by elements in $\mathbb{A}_{\leq n}^{int}$ should rather be compared 
to approximation by elements in $\mathbb{A}_{\leq n-1}$, as there is the same degree of freedom in the choice
of coefficients for the corresponding minimal polynomials.

We quote a variant of Theorem~\ref{korola}, again
essentially due to Davenport and Schmidt.

\begin{theorem}[Davenport, Schmidt] \label{ppp}
Let $m,n$ be positive integers with $m\geq n+1$, 
and $\zeta$ be a real number not algebraic 
of degree at most $n/2$. Assume that there
exist constants $\lambda>0$ and $c>0$ such that 
for arbitrarily large values of $X$, the estimate \eqref{eq:ko} has no solution in an integer vector $(x,y_{1},\ldots,y_{n})$.
Then, the inequality
\begin{equation} \label{eq:neugebauerrr} 
\vert \zeta-\alpha\vert \ll_{m,\zeta} H(\alpha)^{-1/\lambda-1}
\end{equation}
has infinitely many solutions $\alpha\in \mathbb{A}_{=m}^{int}$.
In particular, we have 
\begin{equation} \label{eq:jodadrr}
w_{=m}^{\ast int}(\zeta)\geq \frac{1}{\widehat{\lambda}_{n}(\zeta)}, \qquad w_{n+1}^{\ast int}(\zeta)\geq \frac{1}{\widehat{\lambda}_{n}(\zeta)}.
\end{equation}
Similarly, if \eqref{eq:ko} has no solutions for all large $X$, then 
\begin{equation} \label{eq:formunirr}
H(\alpha)\leq X, \qquad \vert \zeta-\alpha\vert \ll_{m,\zeta} X^{-1/\lambda-1} 
\end{equation}
has a solution $\alpha\in \mathbb{A}_{=m}^{int}$ for all large $X$. In particular we have 
\begin{equation} \label{eq:dregsrr}
\widehat{w}_{=m}^{\ast int}(\zeta)\geq \frac{1}{\lambda_{n}(\zeta)}, \qquad \widehat{w}_{n+1}^{\ast int}(\zeta)\geq \frac{1}{\lambda_{n}(\zeta)}.
\end{equation}
\end{theorem}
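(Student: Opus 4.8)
The plan is to mimic the classical Davenport--Schmidt argument from \cite[Lemma~1]{davsh} and its refinement \cite[Theorem~2.11]{bugbuch}, but to route the construction through polynomials of exactly the prescribed degree $m\geq n+1$. Fix $X$ large for which \eqref{eq:ko} fails. The hypothesis means that for every integer vector $(x,y_1,\ldots,y_n)$ with $1\le x\le X$ we have $\max_{1\le j\le n}|x\zeta^j-y_j|>cX^{-\lambda}$, i.e. the convex body cut out by these inequalities contains no nonzero lattice point. I would then consider the dual/transposed convex body: by Mahler's duality (as recorded around \eqref{eq:mahler}) the absence of small simultaneous approximations forces the existence of an integer polynomial $P$ of degree $\le n$ with $H(P)\ll X^{\lambda}$ and $|P(\zeta)|\ll X^{-1}$, and moreover this $P$ can be taken together with $n$ further linearly independent integer polynomials $P=P_1,P_2,\ldots$ of comparable height so that the relevant $(n{+}1)$-dimensional lattice has its successive minima spread as evenly as the failure of \eqref{eq:ko} allows. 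The point of taking $m\ge n+1$ is that there is genuine slack: I want to produce a \emph{monic} polynomial of degree exactly $m$, and I have $m-n\ge 1$ extra coefficients (the leading one being pinned to $1$, the others free) to play with.

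The key steps, in order, would be: (1) From the failure of \eqref{eq:ko} extract, for the given $X$, a polynomial $P\in\mathbb Z_{\le n}[T]$ with $H(P)\ll X^{\lambda}$ and $|P(\zeta)|\ll X^{-1}$; this is the standard transference. (2) Multiply/perturb $P$ to force degree exactly $m$ and monicity: replace $P$ by $T^{m-n}P(T)$ if necessary to reach degree $m$ (this keeps $|{\cdot}(\zeta)|\asymp_\zeta |P(\zeta)|$ and height unchanged, as in the remark after \eqref{eq:sukmini2}), and then add a suitable small multiple of a fixed monic auxiliary polynomial — or, following Davenport--Schmidt, use the freedom in the lattice to select among the short vectors one whose leading coefficient is $\pm1$ — so that the resulting $Q\in\mathbb Z_{=m}[T]$ is monic with $H(Q)\ll X^{\lambda}$ and $|Q(\zeta)|\ll X^{-1}$; here the constant now depends on $m$, hence the subscript $m$ in \eqref{eq:neugebauerrr}. (3) Show $Q$ has a real root $\alpha$ close to $\zeta$: since $|Q(\zeta)|$ is small while $|Q'(\zeta)|\ll_{m,\zeta} H(Q)\ll X^{\lambda}$ is controlled and not too small (using that $\zeta$ is not algebraic of degree $\le n/2$, hence not a root of the relevant lower-degree factors, so $Q(\zeta)\ne 0$ and the nonvanishing is quantitative along the sequence), Newton-type estimates give a root $\alpha$ with $|\zeta-\alpha|\ll_{m,\zeta} |Q(\zeta)|/|Q'(\zeta)|$. (4) Pass from $Q$ to the minimal polynomial of $\alpha$: since $Q$ is monic of degree $m$, $\alpha$ is an algebraic integer of degree dividing $m$; the condition that $\zeta$ is not algebraic of degree $\le n/2\le (m-1)/2$ is exactly what rules out $\alpha$ having small degree in the usual Davenport--Schmidt dichotomy, forcing $\alpha\in\mathbb A_{=m}^{int}$ after possibly replacing $Q$ by an irreducible monic factor of the right degree. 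Combining, $H(\alpha)\ll_{m,\zeta} H(Q)\ll X^{\lambda}$ and $|\zeta-\alpha|\ll_{m,\zeta} X^{-1}\cdot X^{-\lambda}\cdot(\text{normalization})$, which rearranges to $|\zeta-\alpha|\ll_{m,\zeta} H(\alpha)^{-1/\lambda-1}$, i.e. \eqref{eq:neugebauerrr}. Letting $X$ run through the arbitrarily large values for which \eqref{eq:ko} fails yields infinitely many such $\alpha$, and the displayed exponents \eqref{eq:jodadrr} follow by taking $\lambda\nearrow 1/\widehat\lambda_n(\zeta)$ and recalling $w_{n+1}^{\ast int}\ge w_{=m}^{\ast int}$ is false in general but $w_{n+1}^{\ast int}\ge w_{=n+1}^{\ast int}$ holds trivially, so one applies the construction with $m=n+1$ for the second inequality. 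The uniform statement \eqref{eq:formunirr}, \eqref{eq:dregsrr} is identical with ``arbitrarily large $X$'' replaced by ``all large $X$'' throughout, using $\lambda\nearrow 1/\lambda_n(\zeta)$.

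The main obstacle, and the only place where real care beyond bookkeeping is needed, is step (2): guaranteeing that the short polynomial can be taken \emph{monic} of degree exactly $m$ without destroying the smallness of $|Q(\zeta)|$ or inflating $H(Q)$ beyond $X^{\lambda}$. In the bounded-degree setting one is content with any primitive short polynomial; here the monicity constraint is rigid, and one must check that the extra $m-n$ degrees of freedom genuinely suffice — concretely, that among the integer combinations $T^{m-n}P(T) + \sum_{i} t_i T^{i} R_i(T)$ (with $R_i$ fixed small auxiliary polynomials, $|t_i|$ bounded) one can both achieve leading coefficient $1$ and keep $|{\cdot}(\zeta)|\ll X^{-1}$; this is where the assumption $m\ge n+1$ is used in an essential rather than cosmetic way, and where the implied constants acquire their dependence on $m$. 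The remaining steps are standard and I would cite \cite{davsh}, \cite{bugbuch}, and the comments there, exactly as the excerpt already does for Theorem~\ref{korola}.
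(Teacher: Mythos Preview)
The paper does not supply its own proof of this theorem; it simply states that \eqref{eq:neugebauerrr} and \eqref{eq:jodadrr} reproduce \cite[Lemma~1]{davsh}, points to \cite[Theorem~2.11]{bugbuch} and \cite{teuallein}, and says the dual claims are obtained similarly. So there is no detailed argument in the paper to compare against, and your outline of the Davenport--Schmidt strategy --- transference, construction of a monic polynomial of degree $m$, root-extraction, exact degree --- is the right shape.

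That said, step~(2) as you have written it does not work, and this is not just a matter of bookkeeping. Your option~(a), taking $T^{m-n}P(T)+k\cdot A(T)$ with $A$ a fixed monic polynomial of degree $m$, forces $k=1-(\text{leading coefficient of }P)$, hence $|k|\ll X^{\lambda}$; then $|k\cdot A(\zeta)|\asymp_{\zeta} X^{\lambda}$ swamps $|T^{m-n}P(\zeta)|\ll X^{-1}$ and you lose the smallness of $|Q(\zeta)|$ entirely. Your option~(b), selecting a short vector with leading coefficient $\pm 1$, is not meaningful as stated since the short polynomials $P_{1},\ldots,P_{n+1}$ you obtain from duality all have degree $\le n$ and thus have zero coefficient in degree $m$. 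The genuine Davenport--Schmidt mechanism uses the full basis $P_{1},\ldots,P_{n+1}$ of $\mathbb{R}_{\le n}[T]$: because their coefficient matrix is invertible over $\mathbb{Q}$ with integer entries, one can realise $T^{n}$ (and hence build $T^{m}$ after multiplying by $T^{m-n}$) as a real combination $\sum r_{i}P_{i}$, round the $r_{i}$ to integers, and control both the height and the value at $\zeta$ simultaneously; the point is that the rounding error in $|Q(\zeta)|$ is bounded by $\sum|r_{i}-[r_{i}]|\cdot|P_{i}(\zeta)|\ll X^{-1}$, while the height stays $\ll X^{\lambda}$ because $|r_{i}|$ is controlled via Cramer's rule and the nonvanishing integer determinant. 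You have the ingredients (you do mention the $n+1$ independent polynomials in step~(1)) but you never actually use more than one of them in the construction.

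Two smaller points. First, the condition that $\zeta$ is not algebraic of degree $\le n/2$ is used in the cited proofs primarily to guarantee that suitable pairs among the $P_{i}$ are coprime, which feeds into a resultant lower bound ensuring $|Q'(\zeta)|$ is not too small --- it is not only, as you suggest, a device for ruling out low-degree $\alpha$ at the very end. Second, the upgrade from $\alpha\in\mathbb{A}_{\le m}^{int}$ (which is what \cite{davsh} gives) to $\alpha\in\mathbb{A}_{=m}^{int}$ is precisely the contribution of \cite{teu} and \cite{teuallein}; your step~(4) gestures at this but ``replacing $Q$ by an irreducible monic factor of the right degree'' begs the question, since a priori $Q$ might factor as a product of monic polynomials all of degree $<m$.
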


The claims \eqref{eq:neugebauerrr} and \eqref{eq:jodadrr} reproduce~\cite[Lemma~1]{davsh},
see also~\cite[Theorem~2.11]{bugbuch} and~\cite{teuallein}. The dual claims are obtained similarly, we will omit the proof.
The claim is closely related to the very general main 
result in~\cite{bula} 
by Bugeaud and Laurent on inhomogeneous approximation, of which we will discuss a special case below.
Similarly to Theorem~\ref{korola}, known estimates 
for $\widehat{\lambda}_{n}$ lead
to lower bounds roughly of size $n/2$ for $w_{=n+1}^{\ast int}$.

Recall \eqref{eq:damienr} holds for special numbers. On the other hand,
it is unknown and was posed as a problem in~\cite{bugbuch} and recently rephrased in \cite{bdraft},
whether $w_{n+1}^{\ast int}(\zeta)\geq n$
holds for any transcendental real $\zeta$ when $n\geq 3$. The analogue problem 
for $w_{n+1}^{int}(\zeta)$ is open as well. Again both answers are positive for a pair $n,\zeta$ with
the property $\widehat{\lambda}_{n}(\zeta)=1/n$, 
in view of Theorem~\ref{ppp}. We notice the answer is also positive
when $\zeta$ allows sufficiently good rational approximations, analogously to Corollary~\ref{clintonh}.

\begin{corollary} \label{clintonhi}
Let $\zeta$ be a real number and $n\geq 1$ an integer. Assume $w_{1}(\zeta)\geq n$ holds. Then
we have $w_{=m}^{\ast int}(\zeta)\geq n$ for any $m\geq n+1$. In particular $w_{n+1}^{\ast int}(\zeta)\geq n$.
\end{corollary}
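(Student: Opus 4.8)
The plan is to imitate the argument used for Corollary~\ref{clintonh}, now feeding the hypothesis into Theorem~\ref{ppp} instead of Theorem~\ref{korola}. First I would invoke \cite[Theorem~1.12]{j3}, which says that the assumption $w_{1}(\zeta)\geq n$ forces $\widehat{\lambda}_{n}(\zeta)=1/n$; this is exactly the same input used in the proof of Corollary~\ref{clintonh}. Observe also that $w_{1}(\zeta)\geq n$ means $\zeta$ admits very good rational approximations, in particular $\zeta$ is transcendental (for $n\geq 2$), so the standing hypothesis of Theorem~\ref{ppp} that $\zeta$ is not algebraic of degree at most $n/2$ is met; for $n=1$ the statement $w_{=m}^{\ast int}(\zeta)\geq 1$ is trivial, so we may assume $n\geq 2$.

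Next I would apply Theorem~\ref{ppp} with $m\geq n+1$ arbitrary. Since $\widehat{\lambda}_{n}(\zeta)=1/n$, the value $\lambda=1/n$ makes the system \eqref{eq:ko} have no solution for certain arbitrarily large $X$ (with a suitable constant $c>0$): indeed, by the very definition of $\widehat{\lambda}_{n}$, if \eqref{eq:ko} were solvable for all large $X$ with exponent $\lambda'>\widehat{\lambda}_{n}(\zeta)$ then $\widehat{\lambda}_{n}(\zeta)\geq\lambda'$, a contradiction; one has to be mildly careful that the definitions of $\lambda_n,\widehat\lambda_n$ are phrased with $X^{-\lambda}$ rather than $cX^{-\lambda}$, but this only costs an arbitrarily small loss in the exponent, which is absorbed since we only need $\widehat{\lambda}_{n}(\zeta)=1/n$ to conclude $1/\widehat{\lambda}_{n}(\zeta)=n$. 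Then the first inequality of \eqref{eq:jodadrr} gives $w_{=m}^{\ast int}(\zeta)\geq 1/\widehat{\lambda}_{n}(\zeta)=n$ for every $m\geq n+1$, and the special case $m=n+1$ yields $w_{n+1}^{\ast int}(\zeta)\geq n$.

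I do not expect any genuine obstacle here: the corollary is a direct transcription of Corollary~\ref{clintonh} with the ``integer'' version of the Davenport--Schmidt mechanism, and both ingredients (the identity $\widehat{\lambda}_{n}(\zeta)=1/n$ from $w_{1}(\zeta)\geq n$, and the passage from an upper bound on $\widehat{\lambda}_{n}$ to a lower bound on $w_{=m}^{\ast int}$) are already available in the excerpt. The only point requiring a word of care is the quantifier matching in Theorem~\ref{ppp}: it asks that \eqref{eq:ko} fail for \emph{arbitrarily large} $X$ in order to conclude a \emph{best}-approximation (non-hat) statement, and $\widehat{\lambda}_{n}(\zeta)=1/n$ indeed guarantees precisely that the sharper system (any $\lambda>1/n$) fails along an unbounded sequence of $X$. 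Hence the short proof reads: by \cite[Theorem~1.12]{j3}, $w_{1}(\zeta)\geq n$ implies $\widehat{\lambda}_{n}(\zeta)=1/n$, so Theorem~\ref{ppp} applied with any $m\geq n+1$ gives $w_{=m}^{\ast int}(\zeta)\geq 1/\widehat{\lambda}_{n}(\zeta)=n$, and taking $m=n+1$ gives $w_{n+1}^{\ast int}(\zeta)\geq n$. \qed
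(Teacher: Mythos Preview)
Your proposal is correct and follows exactly the paper's approach: the paper's proof is the one-line remark that the claim follows directly from \cite[Theorem~1.12]{j3} and Theorem~\ref{ppp}, which is precisely what you do. Your added remarks on the quantifier matching and the standing hypothesis of Theorem~\ref{ppp} are sound but not needed beyond what the paper assumes implicitly.
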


As Corollary~\ref{clintonh}, the claim follows directly 
from~\cite[Theorem~1.12]{j3} and Theorem~\ref{ppp}.
The main contribution of this paper concerning approximation by algebraic integers
are bounds for the uniform constants $\widehat{w}_{n}^{int}(\zeta)$
and $\widehat{w}_{n}^{\ast int}(\zeta)$, for 
special numbers $\zeta$.
Another characterization of Liouville numbers is obtained as a special case.

\begin{theorem} \label{algint}
Let $n\geq 2$ be an integer, $w\in[n,\infty]$ and 
$\zeta\in\mathscr{D}_{n,w}$. Then
\begin{equation} \label{eq:lebal}
\frac{n-1}{w-n+2} \leq 
\widehat{w}_{n}^{\ast int}(\zeta) \leq
\widehat{w}_{n}^{int}(\zeta) \leq \frac{n}{w-n+1}.
\end{equation}
In particular, a transcendental real number $\zeta$ is a Liouville number if and only if
\begin{equation} \label{eq:propella}
\widehat{w}_{n}^{int}(\zeta)=\widehat{w}_{n}^{\ast int}(\zeta)= 0, \qquad\quad \text{for any} \;\; n\geq 1.
\end{equation}
\end{theorem}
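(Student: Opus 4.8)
The plan is to establish the two-sided bound \eqref{eq:lebal} and then extract the Liouville characterization as the case $w=\infty$. For the \emph{upper bound} $\widehat{w}_{n}^{int}(\zeta)\leq \frac{n}{w-n+1}$, the natural route is to exploit the special continued-fraction structure of $\zeta\in\mathscr{D}_{n,w}$. Fix a large $X$ of the form $X=H^{w-n+1}$ where $H=q_{i}$ is the denominator of a convergent, chosen so that $q_{i+1}\asymp q_{i}^{w}$ (which the defining recurrence of $\mathscr{B}_{w}$, and more generally the hypothesis \eqref{eq:open} satisfied by $\mathscr{D}_{n,w}$, guarantees up to the relevant precision). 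The point is that for such $X$ the best one can do with a \emph{monic} integer polynomial $P$ of degree $\leq n$ and height $\leq X$ is controlled: writing $P(\zeta)$ in terms of the near-vanishing linear form $q_{i}\zeta-p_{i}$, one shows $|P(\zeta)|\gg X^{-n/(w-n+1)}$ for all large $X$ of this type, because the leading coefficient is pinned to $1$ and cannot be used to cancel. This forces $\widehat{w}_{n}^{int}(\zeta)\leq n/(w-n+1)$. Here one can lean on the machinery already developed for \eqref{eq:kannt} and \eqref{eq:erstard}; in fact the upper bound for $\widehat{w}_{=n}^{\ast}$ in Theorem~\ref{bs} and the monic analogue are proved by essentially the same gap argument, and $\widehat{w}_{n}^{int}\leq\widehat{w}_{=n}(\zeta)$-type comparisons are not available, so the argument must be run directly for monic polynomials.

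For the \emph{lower bound} $\widehat{w}_{n}^{\ast int}(\zeta)\geq \frac{n-1}{w-n+2}$, I would invoke Theorem~\ref{ppp} with the parameter choice $m=n$ and the auxiliary degree $n-1$ in place of $n$ there (so that the condition $m\geq (n-1)+1$ is met with equality). Theorem~\ref{ppp} gives $\widehat{w}_{n}^{\ast int}(\zeta)\geq 1/\lambda_{n-1}(\zeta)$, so it suffices to compute $\lambda_{n-1}(\zeta)$ for $\zeta\in\mathscr{D}_{n,w}$. Since $\zeta\in\mathscr{D}_{n,w}\subseteq\mathscr{D}_{n-1,w}$ in the sense that \eqref{eq:open} holds through index $n$ hence through $n-1$, one has $w_{n-1}(\zeta)=w$, and the relevant transference — here one wants the sharp evaluation of $\lambda_{n-1}$ for numbers of this biased type, as recorded in~\cite{j3} alongside \eqref{eq:kannt} — yields $\lambda_{n-1}(\zeta)=\frac{w-n+2}{n-1}$ (the ``generic'' value attached to $\widehat{w}_{n-1}^{\ast}(\zeta)=\frac{w}{w-n+2}$ via Mahler duality \eqref{eq:mahler} and the structure of $\mathscr{D}$). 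Taking reciprocals gives exactly $\frac{n-1}{w-n+2}$. The chain $\widehat{w}_{n}^{\ast int}(\zeta)\leq\widehat{w}_{n}^{int}(\zeta)$ is \eqref{eq:obvious}-type and automatic.

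Finally, the Liouville characterization: if $\zeta$ is a Liouville number then $\zeta\in\mathscr{D}_{n,\infty}$ for every $n$, and \eqref{eq:lebal} with $w=\infty$ collapses both bounds to $0$, giving \eqref{eq:propella}; for $n=1$ the exponents are $0$ for any non-integer $\zeta$ by the remark following Definition~\ref{def2}. Conversely, if $\zeta$ is transcendental but not Liouville, then $w_{1}(\zeta)<\infty$, hence $\lambda_{n}(\zeta)\leq\lambda_{1}(\zeta)=w_{1}(\zeta)<\infty$ for every $n$, and Theorem~\ref{ppp} (dual part, \eqref{eq:dregsrr}) gives $\widehat{w}_{n+1}^{\ast int}(\zeta)\geq 1/\lambda_{n}(\zeta)>0$, so \eqref{eq:propella} fails; one checks separately that $\widehat{w}_{1}^{int}(\zeta)>0$ is irrelevant since the statement only needs failure for \emph{some} $n$, and taking $n\geq 2$ suffices.

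The main obstacle I anticipate is the upper-bound half: one must argue that the monic constraint genuinely costs a factor, i.e.\ that a degree-$n$ monic polynomial cannot approximate $\zeta$ as well as an unrestricted one of the same height, and make this quantitative uniformly in $X$ along the convergent scale. This requires a careful analysis of how $P(\zeta)$ decomposes relative to the tower of convergents of $\zeta$ — essentially reproving, in the monic setting, the gap estimate underlying Theorem~\ref{bs}. The lower bound, by contrast, is a black-box application of Theorem~\ref{ppp} once the value of $\lambda_{n-1}(\zeta)$ is cited.
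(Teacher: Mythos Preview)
Your lower-bound argument and the Liouville characterization match the paper's proof essentially verbatim: invoke Theorem~\ref{ppp} with the index shift $n\to n-1$, cite the computation $\lambda_{n-1}(\zeta)=(w-n+2)/(n-1)$ from the proof of Theorem~\ref{bs} (which applies since $\mathscr{D}_{n,w}\subseteq\mathscr{D}_{n-1,w}$), and take reciprocals.

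For the upper bound you have correctly located the difficulty but not the mechanism that resolves it. The paper's argument is sharper and more structural than the ``decompose $P(\zeta)$ relative to the linear form and track the leading coefficient'' idea you sketch. What the paper actually uses is this: the proof of Theorem~\ref{bs} shows that for $\zeta\in\mathscr{D}_{n,w}$ and any $\epsilon>0$ there are arbitrarily large $X$ at which \emph{every} $P\in\mathbb{Z}_{\leq n}[T]$ with $H(P)\leq X$ and $|P(\zeta)|\leq X^{-v-\epsilon}$, where $v=n/(w-n+1)$, lies in the $\mathbb{Z}$-span of $\{Q,TQ,\ldots,T^{n-1}Q\}$ for a single linear polynomial $Q(T)=aT+b$ arising from a convergent, with $(a,b)=1$ and $a>1$. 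The decisive step, which your proposal does not mention, is an application of the Lemma of Gau\ss: if $U=RQ\in\mathbb{Z}[T]$ with $Q$ primitive, then $R\in\mathbb{Z}[T]$, hence the leading coefficient of $U$ is a multiple of $a>1$, and $U$ cannot be monic. Thus no monic polynomial of height $\leq X$ beats $X^{-v-\epsilon}$ at these scales, and $\widehat{w}_{n}^{int}(\zeta)\leq v$ follows.

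So your strategy would work, but only once you observe that the obstruction to being monic is algebraic (divisibility of the leading coefficient forced by Gau\ss) rather than analytic (a size estimate on $P(\zeta)$). Without this, the phrase ``the leading coefficient is pinned to $1$ and cannot be used to cancel'' remains a heuristic; with it, the proof is two lines once Theorem~\ref{bs} is in hand.
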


We point out that Theorem~\ref{algint} can be interpreted 
in terms of inhomogeneous approximation,
complementing~\cite{bula}.
Indeed, \eqref{eq:propella} yields that 
for $n\geq 1 ,\epsilon>0$, $\zeta$ a Liouville number and $\alpha\in\{\zeta^{n+1},\zeta^{n+2},\ldots\}$ (and more generally
any $\alpha=Q(\zeta)$ for $Q\in\mathbb{Q}_{\geq n+1}[T]$), 
the system
\begin{equation} \label{eq:zuwachs}
\max_{0\leq j\leq n} \vert x_{j}\vert \leq X, \qquad  \vert \alpha+x_{0}+\zeta x_{1}+\cdots+\zeta^{n}x_{n}\vert \leq X^{-\epsilon}
\end{equation}
has no solution for certain arbitrarily large values of $X$. 
The main result in~\cite{bula} shows the same for Lebesgue almost
all $\alpha$. The latter provided a major
improvement on Cassels~\cite[Theorem~3 of Chapter III]{cassels}. Thus our contribution in \eqref{eq:propella}
can be interpreted as to provide explicit examples of $\alpha$ for which the metric claim is satisfied. 
The metric result in~\cite{bula} and the proof of \eqref{eq:lebal} below furthermore suggest equality in the two left inequalities in \eqref{eq:lebal} for any real $\zeta$.
Note that if otherwise $\alpha=Q(\zeta)$ 
for $Q\in\mathbb{Q}_{\leq n}[T]$, then for 
all large $X$ and certain $x_{i}$ the right expression in \eqref{eq:zuwachs} is $\ll_{Q}X^{-n}$, as it is 
roughly speaking just a shift of the homogeneous problem.

\section{Proofs} \label{proofs}

The proof of Theorem~\ref{bugpro} is based on the result of Davenport and Schmidt \eqref{eq:dsc}. To rule out that all good approximations
to $\zeta$ are rational we use the method
from~\cite[Theorem~1.12]{j3}, which we explicitly carry out again
for the reason to be self-contained and the convenience of the reader. Recall $\Vert \alpha\Vert$ denotes the distance of 
$\alpha\in\mathbb{R}$ to the nearest integer. 

\begin{proof}[Proof of Theorem~\ref{bugpro}]
Assume for given $\zeta$ the claim would be false. Then, 
by the result of Davenport and Schmidt, for some 
constant $c=c(\zeta)$
there exist infinitely many rational numbers $\alpha=y_{0}/x_{0}$ for which \eqref{eq:dsc} holds (in particular 
$\lambda_{1}(\zeta)\geq 2$). Now we essentially follow 
the proof of~\cite[Theorem~1.12]{j3} for $n=2$.
For a fraction $y_{0}/x_{0}$ as above, we clearly may assume
$\vert\zeta-y_{0}/x_{0}\vert\leq 1$, and thus
the formula $\vert \zeta^{2}-y_{0}^{2}/x_{0}^{2}\vert=\vert\zeta-y_{0}/x_{0}\vert \cdot \vert\zeta+y_{0}/x_{0}\vert\leq
(2\vert \zeta\vert+1)\cdot \vert\zeta-y_{0}/x_{0}\vert$ implies
\begin{equation} \label{eq:wiedavor}
\left\vert \zeta^{j}-\frac{y_{0}^{j}}{x_{0}^{j}}\right\vert 
\leq c_{1} x_{0}^{-3}, \qquad j\in{\{1,2\}},
\end{equation}
for $c_{1}=\max\{ (2\vert \zeta\vert+1)c,1\}$.
Define $X=x_{0}^{2}/(2c_{1})$ and let
$1\leq x\leq X$ be an arbitrary integer. 
Since $x\leq x_{0}^{2}/(2c_{1})\leq x_{0}^{2}/2<x_{0}^{2}$,
the integer $x$ has a representation in base $x_{0}$ as
\[
x= b_{0}+b_{1}x_{0}, \qquad b_{i}\in\{0,1,2,\ldots,x_{0}-1\}.
\]
Denote by $i\in\{0,1\}$ the smallest index with $b_{i}\neq 0$, and
further let $u=i+1\in\{1,2\}$.
Since $x_{0},y_{0}$ are coprime and $b_{i}\neq 0$, we have
\begin{equation} \label{eq:zzz}
\left\Vert x\frac{y_{0}^{u}}{x_{0}^{u}}\right\Vert = 
\left\Vert b_{i}x_{0}^{u-1}\frac{y_{0}^{u}}{x_{0}^{u}}\right\Vert= \left\Vert \frac{b_{i}y_{0}^{u}}{x_{0}}\right\Vert\geq x_{0}^{-1}.
\end{equation}
On the other hand \eqref{eq:wiedavor} yields
\begin{equation} \label{eq:zz}
\left\vert x\left(\zeta^{u}-\frac{y_{0}^{u}}{x_{0}^{u}} \right)\right\vert\leq X\left\vert\zeta^{u}-\frac{y_{0}^{u}}{x_{0}^{u}} \right\vert
\leq \frac{x_{0}^{2}}{2c_{1}}\cdot c_{1} x_{0}^{-3}= \frac{1}{2}x_{0}^{-1}.
\end{equation}
Combination of \eqref{eq:zzz} and \eqref{eq:zz} and the triangular inequality give
\[
\max\{ \Vert \zeta x\Vert, \Vert\zeta^{2} x\Vert\} \geq \Vert \zeta^{u}x\Vert \geq \frac{1}{2}x_{0}^{-1}= c^{\prime} X^{-1/2},
\]
for the constant $c^{\prime}=1/\sqrt{8c_{1}}$ that again 
depends on $\zeta$ only. 
Thus, since $x\leq X$ was arbitrary, the assumption \eqref{eq:ko} of Theorem~\ref{korola} is satisfied for 
$\lambda=1/2$ and the constant $c^{\prime}$. Hence \eqref{eq:neugebauer} applies, which yields precisely the claim.
Since $c$ in \eqref{eq:dsc} and thus $c_{1}$
is effective
and the implied constant in \eqref{eq:neugebauer} can be
made effective as well, so is our constant.
\end{proof}

For the proof of Theorem~\ref{grad2} we recall the notion of best approximation polynomials of 
a given degree $n$ associated to a real number $\zeta$. 
It can be defined as the sequence of integer 
polynomials $(P_{i})_{i\geq 1}$ with the properties
$1\leq H(P_{1})\leq H(P_{2})\leq \cdots$ and $\vert P_{i}(\zeta)\vert$ 
minimizes the value $\vert P(\zeta)\vert$ among $P\in \mathbb{Z}_{\leq n}[T]$ 
of height $0<H(P)\leq H(P_{i})$.
The polynomials involved in the definition of $w_{n}$ can obviously be chosen as best approximation polynomials.
Furthermore every best approximation polynomial satisfies $\vert P_{i}(\zeta)\vert\ll_{n,\zeta} H(P_{i})^{-n}$ by Dirichlet's~Theorem,
see also the proof of~\cite[Lemma~8.1]{bugbuch}.
Moreover $\vert P_{i}(\zeta)\vert\leq H(P_{i})^{-\widehat{w}_{n}(\zeta)+\epsilon}$ for any $\epsilon>0$ and 
sufficiently large $i\geq i_{0}(\epsilon)$.
We will utilize the estimates
\begin{equation} \label{eq:hoehen}
H(P_{1}P_{2})\asymp_{n} H(P_{1})H(P_{2})
\end{equation}
for any polynomials $P_{1},P_{2}\in \mathbb{Z}_{\leq n}[T]$, 
sometimes referred to as Gelfond's Lemma. 
See also~\cite{wirsing} or \cite[Lemma~A.3]{bugbuch}.
We will apply~\cite[Theorem~5.1]{j3} for $n=2$ several times, which asserts
that $w_{1}(\zeta)\geq n$ implies $\widehat{w}_{n}(\zeta)=n$,
or equivalently $\widehat{w}_{n}(\zeta)>n$ implies $w_{1}(\zeta)<n$.

\begin{proof}[Proof of Theorem~\ref{grad2}]
First we show \eqref{eq:numerouno}.
In view of the obvious inequalities \eqref{eq:obvious}, it suffices to show $w_{=2}(\zeta)\geq w_{2}(\zeta)$
and $\widehat{w}_{=2}(\zeta)\geq \widehat{w}_{2}(\zeta)$.
Note that from our assumption $\widehat{w}_{2}(\zeta)>2$
and~\cite[Theorem~5.1]{j3} we infer $w_{1}(\zeta)<2$. 
Hence, since any quadratic best approximation polynomial satisfies $\vert P(\zeta)\vert\ll_{n,\zeta} H(P)^{-2}$,
no linear polynomial of large height can induce a quadratic best approximation polynomial. 
Moreover, essentially by \eqref{eq:hoehen}, also no product $P=P_{1}P_{2}$ of linear polynomials $P_{i}$
of large enough height $H(P)$ can be a best approximation. 
Indeed, if $\epsilon>0$ and we write
$H(P_{1})H(P_{2})=:H$, we have $H(P)\gg H$ by \eqref{eq:hoehen} but also
\[
 \vert P(\zeta)\vert= \vert P_{1}(\zeta)\vert\cdot \vert P_{2}(\zeta)\vert
\geq H(P_{1})^{-w_{1}(\zeta)-\epsilon}H(P_{2})^{-w_{1}(\zeta)-\epsilon}\gg H^{-w_{1}(\zeta)-\epsilon}.
\]
If we choose $\epsilon=(2-w_{1}(\zeta))/2>0$ we again obtain a contradiction to $P$ being a best approximation polynomial.
Thus any quadratic best approximation polynomial of sufficiently large height 
is irreducible of degree two. The deduction of \eqref{eq:numerouno} is now obvious.
Next we show \eqref{eq:numerodos}. Let $(\alpha_{i})_{i\geq 1}$ be a sequence of rational or
quadratic irrational numbers as in the definition of
$w_{2}^{\ast}(\zeta)$,  with minimal polynomials $P_{i}$ 
respectively. By Theorem~\ref{bugpro}, we can assume
$\vert\zeta-\alpha_{i}\vert \ll H(\alpha_{i})^{-3}$.
With the standard estimate $\vert P_{i}(\zeta)\vert \ll_{n,\zeta} H(P_{i})\vert \zeta-\alpha_{i}\vert$ 
mentioned already in Section~\ref{bounded},
we infer $\vert P_{i}(\zeta)\vert\ll_{\zeta} H(P_{i})^{-2}$.
If infinitely many among the polynomials $P_{i}$ were linear, we would have $w_{1}(\zeta)\geq 2$
and hence again by~\cite[Theorem~5.1]{j3} we infer $\widehat{w}_{2}(\zeta)=2$, contradicting the assumption.
Hence all but finitely many $\alpha_{i}$ are quadratic irrational and \eqref{eq:numerodos} follows. The 
claim on $\widehat{w}_{2}^{\ast}$ follows similarly.

For the last claim, observe that it was shown 
in~\cite[Th\'{e}or\`{e}me~5.3]{adambug} that 
when $\widehat{w}_{n}(\zeta)>n$
and $\zeta$ is a $U$-number, then it must be a $U_{m}$-number 
for $m\leq n$. Applied for $n=2$,
we have to exclude that $\zeta$ is a $U_{1}$-number or a $U_{2}$-number.
Now \cite[Theorem~5.1]{j3} implies directly that $\zeta$ cannot be a $U_{1}$-number. Similarly,
for $\zeta$ any $U_{2}$-number, we obtain $\widehat{w}_{2}(\zeta)=2$ from~\cite[Corollary~2.5]{bschlei}, contradicting our hypothesis.
\end{proof}

We turn to the proofs of Section~\ref{neures}. We start with the
proof of the polynomial criterion. We recall some notation and classical facts from analytic number theory. Let $N\geq 1$ be an integer. 
The Prime Number Theorem 
implies that there are $\pi(N)\gg N/\log N$ primes up to $N$, and
the number of divisors $\tau(N)$ of $N$ is bounded by 
$\tau(N)\ll_{\epsilon} N^{\epsilon}$
for arbitrarily small $\epsilon>0$, see
the book of Apostol~\cite[page~296]{apostol}.
Finally, the number of prime divisors $\omega(N)$ of
$N$ is bouned by $\omega(N)\ll \log N$
(more precisely $\omega(N)\ll \log N/\log\log N$ with asymptotic equality
when $N$ is primorial, see Hardy and Wright~\cite{hw}).

\begin{proof}[Proof of Theorem~\ref{irrpol}]
Let $P,Q$ as in the theorem with $X= \max\{ H(P),H(Q)\}$, 
and $\delta\in(0,1)$ be given.
We may write
\begin{equation} \label{eq:heute}
Q(T)= a_{1}T+a_{2}T^{2}+\cdots+a_{n}T^{n}, \qquad 
P(T)=b_{0}+b_{1}T+\cdots +b_{n-1}T^{n-1},
\end{equation}
with $b_{0}, a_{n}$ non-zero.
Indeed, $b_{0}\neq 0$ since $P$ has no common linear factor 
with $Q$ and $T\vert Q(T)$, and $a_{n}\neq 0$ since $Q$ has 
exact degree $n$ by assumption.

We first show the bounds for $R_{h}$ with $h$ a prime.
Assume that $h$ is prime
and the polynomial $R_{h}(T)=Q(T)+hP(T)$  
has a linear factor $q_{h}T-p_{h}$. Equivalently each
$R_{h}$ has a rational root $p_{h}/q_{h}$, written 
in lowest terms.
Inserting $R_{h}(p_{h}/q_{h})=0$ in \eqref{eq:heute}
and multiplication with $q_{h}^{n}\neq 0$ yields
\begin{equation} \label{eq:det}
a_{1}p_{h}q_{h}^{n-1}+a_{2}p_{h}^{2}q_{h}^{n-2}+\cdots+a_{n}p_{h}^{n}
+h(b_{0}q_{h}^{n}+b_{1}p_{h}q_{h}^{n-1}+\cdots+b_{n-1}p_{h}^{n-1}q_{h})
=0.
\end{equation}
Since any expression apart from $a_{n}p_{h}^{n}$ contains the 
factor $q_{h}$
and $(p_{h},q_{h})=1$, we conclude $q_{h}\vert a_{n}$. 
Similarly $p_{h}\vert (hb_{0})$. 
Since $a_{n}\neq 0$, the quoted result from~\cite{apostol} above 
with $\epsilon=\delta/3$ yields that it has at most
$\tau(a_{n})\ll_{\delta} \vert a_{n}\vert^{\delta/3}$ divisors, so there appear at most 
$\ll_{\delta} \vert a_{n}\vert^{\delta/3}\leq H(Q)^{\delta/3}\leq X^{\delta/3}$ different denominators $q_{h}$. For any
such fixed $q=q_{h}$, we estimate the number of primes 
$h$ for which the polynomial $R_{h}$ can have a root $p_{h}/q$. 

Recall $p_{h}\vert (hb_{0})$, such that 
either $p_{h}\vert b_{0}$ or $p_{h}=hs_{h}$ 
for some $s_{h}\vert b_{0}$.
We treat both possible cases separately. Assume first
$p_{h}\vert b_{0}$. Since $b_{0}\neq 0$,
there are at most 
$\tau(b_{0})\ll_{\delta} \vert b_{0}\vert^{\delta/3}\leq 
H(Q)^{\delta/3}\leq X^{\delta/3}$
divisors of $b_{0}$, so there arise at most
$\ll_{\delta} X^{\delta/3}$ different $p_{h}$. However,
for given $p_{h},q=q_{h}$ the number $h$ is uniquely 
determined by \eqref{eq:det}. Indeed, otherwise both expressions
$P(p_{h}/q_{h})$ and $Q(p_{h}/q_{h})$
must vanish, contradicting 
the hypothesis that $P,Q$ have no common linear factor.
Hence also
the number of $h$ belonging to this class is $\ll_{\delta} X^{\delta/3}$.

Now assume $p_{h}=hs_{h}$ with $s_{h}\vert b_{0}$. 
Since $b_{0}\neq 0$ also $s_{h}\neq 0$. Then 
for our fixed $q_{h}=q$, the relation \eqref{eq:det} becomes
after division by $h\neq 0$ and rearrangements
\begin{equation} \label{eq:hungrig}
h^{n}(a_{n}s^{n}+b_{n-1}s^{n-1}q)+\cdots+
h(a_{2}s^{2}q^{n-2}+sb_{1}q^{n-1})+a_{1}sq^{n-1}+b_{0}q^{n}=0.
\end{equation}
Reducing modulo $h$ we see that $h\vert (q^{n-1}(sa_{1}+b_{0}q))$.
Since $h\vert p_{h}$ and $(p_{h},q)=1$ we cannot 
have $h\vert q$, such that
\begin{equation} \label{eq:teile}
h\vert (s_{h}a_{1}+b_{0}q).
\end{equation}
First assume $s_{h}a_{1}+b_{0}q\neq 0$.
Since $q$ is fixed and $s_{h}\vert b_{0}\neq 0$, there are at most 
$\tau(b_{0})\ll_{\delta} \vert b_{0}\vert^{\delta/3}\leq X^{\delta/3}$
choices for $s_{h}$, consequently 
there arise at most $\ll_{\delta} X^{\delta/3}$ different
numbers $N_{s}=s_{h}a_{1}+b_{0}q$. 
Since $q\vert a_{n}$ and $s_{h}\vert b_{0}$, all
quantities $s_{h},a_{1},b_{0},q$ are bounded by $X$
and thus $\vert N_{s}\vert\leq 2X^{2}$ for all $N_{s}\neq 0$.
Hence each $N_{s}\neq 0$ has at most 
$\omega(N_{s})\ll \log \vert N_{s}\vert\leq \log(2X^{2})\ll \log X$ 
prime divisors. So by \eqref{eq:teile}
at most $\ll_{\delta} X^{\delta/3} \log X$ choices for $h$ arise 
in this way.

Now assume $s_{h}a_{1}+b_{0}q=0$. Then from \eqref{eq:hungrig} 
after division by $hs_{h}\neq 0$ we obtain 
that $h\vert (q^{n-2}(a_{2}s_{h}+b_{1}q))$
and again since $h\nmid q$ we conclude $h\vert (a_{2}s_{h}+b_{1}q)$.
In case of $s_{h}a_{2}+b_{1}q\neq 0$, with the same argument 
as above we again obtain at most 
$\ll_{\delta} X^{\delta/3} \log X$ choices for $h$. 
If otherwise $s_{h}a_{2}+b_{1}q=0$,
then we proceed as above to obtain $h\vert (a_{3}s_{h}+b_{2}q)$. 
Repeating this procedure up to $h\vert (a_{n}s_{h}+b_{n-1}q)$
leads to in total at most 
$\ll_{n,\delta} X^{\delta/3} \log X$ choices of $h$, unless we have
$s_{h}a_{k}+b_{k-1}q=0$ for all $1\leq k\leq n$. 
Since $a_{n}s_{h}\neq 0$, the latter implies
$(a_{1},a_{2},\ldots,a_{n})=-\frac{q}{s_{h}}(b_{0},b_{1},\ldots,b_{n-1})$, in other words
$Q(T)/(TP(T))=-q/s_{h}$ is constant, which 
we excluded by assumption.

Thus we indeed only have $\ll_{n,\delta} X^{\delta/3} \log X$ choices of 
primes $h$ for which $R_{h}$ has a root
with given denominator $q_{h}=q$. Since we have noticed
that at most $\ll_{\delta} X^{\delta/3}$ different $q$ occur
and $\log X\ll_{\delta} X^{\delta/3}$,
the total number of $h$ for which $R_{h}$ has a rational 
root is indeed $\ll_{n,\delta} X^{\delta}$.

Similar arguments show that the number of primes $l$ 
for which $S_{l}=P+lQ$ has a linear factor are of order 
$\ll_{n,\delta} X^{\delta}$ as well. We only sketch the proof.
Inserting $S_{l}(p_{l}/q_{l})=0$ in \eqref{eq:heute} we obtain
$p_{l}\vert b_{0}$ and hence only $\ll_{n,\delta} X^{\delta/3}$ 
many numerators $p_{l}$ of roots $p_{l}/q_{l}$
of $S_{l}$ can appear. We again estimate the number 
of primes $l$ with fixed $p_{l}=p$. 
Since $q_{l}\vert (lha_{n})$, by a very 
similar recursive procedure as for $R_{h}$, 
the number of such $l$ is again of
order $\ll_{n,\delta} X^{\delta/3}\log X$
unless $a_{n}/b_{n-1}=a_{n-1}/b_{n-2}=\cdots=a_{1}/b_{0}$,
which is again excluded by assumption. The claim follows as above.

By Prime Number Theorem there 
are $\gg Y/\log Y\gg_{\delta} Y^{\delta}$
primes up to $Y$, if we choose $Y=dX^{\varepsilon}$ 
for $\varepsilon=2\delta$ and suitable $d=d(n,\delta)$
we avoid the primes in both
sets of cardinality $\ll_{n,\delta} X^{\delta}$ above.

Finally we show that $R_{h}$ or $S_{l}$
having a linear factor implies $h\leq nX^{n+1}$ or $l\leq nX^{n+1}$,
respectively. We only show the claim for $R_{h}$, the other case 
is very similar again. Recall that
$\vert q_{h}\vert \leq \vert b_{0}\vert \leq X$ 
since $q_{h}\vert b_{0}$.
We showed that either $p_{h}\vert b_{0}$ or $p_{h}=hs_{h}$
with $s_{h}\vert b_{0}$. In the first case 
$\vert p_{h}\vert \leq X$,
such that if we express $h$ from \eqref{eq:det} 
as a rational function then
each expression in the numerator $q_{h}^{n}Q(p_{h}/q_{h})$
and denominator $q_{h}^{n}P(p_{h}/q_{h})$ is bounded 
in absolute value by $X^{n+1}$. Hence the numerator 
has absolute value at most
$nX^{n+1}$, and the denominator at least $1$
as we have already noticed it is non-zero.
We conclude the bound $h\leq nX^{n+1}$ in this case.
In the latter case, from $s_{h}\vert b_{0}\neq 0$ we 
infer $\vert s_{h}\vert\leq X$, and from \eqref{eq:teile}
we obtain the upper bound $h\leq 2X^{2}$ unless 
$s_{h}a_{1}+b_{0}q=0$. In this case the proof above 
showed $h\vert (s_{h}a_{2}+b_{1}q)$ as well
such that again $h\leq 2X^{2}$ unless $s_{h}a_{2}+b_{1}q=0$. 
We iterate this argument, and by assumption there must 
exist $k\leq n$ with 
$s_{h}a_{k}+b_{k-1}q\neq 0$. Hence we obtain the bound 
$h\leq 2X^{2}\leq nX^{n+1}$ in the second case anyway.
\end{proof}

We recall that by Gelfond's Lemma there exists 
a (small) constant $K=K(n)>0$
such that for $P,Q\in\mathbb{Z}_{\leq n}$ as soon as $H(Q)<KH(P)$
we cannot have that $P$ divides $Q$. This argument was already used
for the proof of~\cite[Theorem~2.3]{bschlei}, which is indeed
very similar to our proof below.

\begin{proof}[Proof of Theorem~\ref{basned}]
Let $\epsilon>0$. As already noticed in the proof 
of Lemma~\ref{mouz} there exist
infinitely many irreducible polynomials 
$P\in \mathbb{Z}_{\leq n}[T]$ with 
$\vert P(\zeta)\vert\leq H(P)^{-w_{n}(\zeta)+\epsilon}$. 
If the degree of $P$ is $n$ the claim follows trivially,
so we may assume it is less. Consider $P$ fixed of large height
and let $X=H(P)\cdot K/2$ with $K=K(n)$ as above. Then 
\begin{equation} \label{eq:track}
H(P)\ll_{n} X, \qquad
\vert P(\zeta)\vert\ll_{n} X^{-w_{n}(\zeta)+\epsilon}.
\end{equation}
Now by definition of $\widehat{w}_{n}(\zeta)$ there exists 
$R\in \mathbb{Z}_{\leq n}[T]$ such that
\begin{equation} \label{eq:trick}
H(R)\leq X, \qquad
\vert R(\zeta)\vert\leq X^{-\widehat{w}_{n}(\zeta)+\epsilon}.
\end{equation}
Obviously $R\neq P$ since $H(P)>X>H(R)$.
By construction in fact
$R$ cannot be a multiple of $P$, and
since $P$ is irreducible $P,R$ have to be coprime.
Again the claim of the theorem follows trivially from \eqref{eq:trick}
if the degree of $R$ is $n$, 
so we may assume $R\in\mathbb{Z}_{\leq n-1}[T]$. 
If $d\leq n-1$ denotes the degree of $R$, then 
let $Q(T)=R(T)T^{n-d}$, which has degree $n$.
Since obviously $P,Q$ are coprime as well and $H(Q)=H(R)\leq X$, 
we can apply the hypothesis to find non-zero
integers $a,b$ of absolute value at most $c(n,\epsilon)X^{\epsilon}$ 
such that $S(T)=aQ(T)+bP(T)$ is irreducible. 
Since $P\in\mathbb{Z}_{\leq n-1}[T]$,
$Q\in\mathbb{Z}_{=n}[T]$ and $a\neq 0$,
we have $S\in\mathbb{Z}_{=n}[T]$.
Clearly $\vert Q(\zeta)\vert\ll_{\zeta,n} \vert Q(\zeta)\vert$.
Thus from \eqref{eq:track} and \eqref{eq:trick} we infer
\[
H(S)\leq \vert a\vert H(Q)+\vert b\vert H(P)\leq \max\{\vert a\vert,
\vert b\vert\} X
\ll_{n,\epsilon} X^{1+\epsilon}
\]
and
\[
\vert S(\zeta)\vert\leq \vert a\vert \cdot\vert Q(\zeta)\vert+
\vert b\vert \cdot\vert P(\zeta)\vert
\leq \max\{\vert a\vert,
\vert b\vert\} X^{-\widehat{w}_{n}(\zeta)+\epsilon}
\ll_{n,\zeta} 
X^{-\widehat{w}_{n}(\zeta)+2\epsilon}.
\]
Hence \eqref{eq:toeroe} follows (conditionally) 
as $\epsilon$ can be chosen arbitrarily small. 

Finally, for \eqref{eq:toeroe1} we readily check that $P,Q$
satisfy the assumptions of Theorem~\ref{irrpol}. 
It yields $a=1$, $b=p$ with 
$\max\{\vert a\vert, \vert b\vert\}=p\leq cX^{\epsilon}$
for which $S=aQ+bP$ has no linear factor. Since $S$ is cubic 
it must in fact be irreducible.
\end{proof}

For the proof of Theorem~\ref{bs}
it is convenient to use the notion of parametric geometry of numbers 
introduced by Schmidt and Summerer~\cite{ss}. We develop the theory only as far as needed
for our concern and slightly modify their notation. We
refer to~\cite{ss} for more details. Keep $\zeta\in\mathbb{R}$ and $n\geq 1$ an integer fixed.
For a parameter $Q>1$ and $1\leq j\leq n+1$, let 
$\psi_{n,j}(Q)$ be the least value of $\eta$ such that
\[
\vert x\vert \leq Q^{1+\eta}, \qquad \vert \zeta^{j}x-y_{j}\vert \leq Q^{-\frac{1}{n}+\eta}
\]
has $j$ linearly independent solutions $(x,y_{1},\ldots,y_{n})\in\mathbb{Z}^{n+1}$. 
Then $-1\leq \psi_{n,j}(Q)\leq 1/n$ for any $Q$
by Minkowski's Theorem. Let
\[
\underline{\psi}_{n,j}= \liminf_{Q\to\infty} \psi_{n,j}(Q), \qquad 
\overline{\psi}_{n,j}= \limsup_{Q\to\infty} \psi_{n,j}(Q).
\]
Similarly denote by $\psi_{n,j}^{\ast}(Q)$ the smallest
number $\eta$ such that
\[
H(P)\leq Q^{\frac{1}{n}+\eta}, \qquad \vert P(\zeta)\vert \leq H(P)^{-1+\eta}
\]
has $j$ linearly independent solutions in $P\in\mathbb{Z}[T]$ of degree at most $n$.
We have $-1/n\leq \psi_{n,j}^{\ast}(Q)\leq 1$ for every $Q>1$. 
Then Mahler's duality, whose special case \eqref{eq:mahler} we mentioned, can be reformulated as
$\vert\psi_{n,j}(Q)+\psi_{n,n+2-j}^{\ast}(Q)\vert \ll 1/\log Q$ 
for $1\leq j\leq n+1$, and hence 
$\underline{\psi}_{n,j}= -\overline{\psi}_{n,n+2-j}^{\ast}$.
It was shown in the remark on page 80 in~\cite{ss} that the identity $\underline{\psi}_{n,1}=-n\overline{\psi}_{n,n+1}$
is equivalent to equality in Khintchine's inequality \eqref{eq:kinschin}, that is
\[
\lambda_{n}(\zeta)=\frac{w_{n}(\zeta)-n+1}{n}.
\]
Recall also the notion of the successive minima exponents $w_{n,j}, \widehat{w}_{n,j}$
defined subsequent to Corollary~\ref{referenzk}.

\begin{proof}[Proof of Theorem~\ref{bs}]      
We will restrict to the case $w<\infty$, the proof of the remaining case $w=\infty$ works very similarly.
By assumption $\zeta\in\mathscr{D}_{n,w}$, for any $\epsilon>0$ 
the estimate
\[
\vert P(\zeta)\vert \leq H(P)^{-w+\epsilon}
\]
has a solution $P(T)=aT+b$ with integers $a,b$ of arbitrarily large height $H(P)=\max\{\vert a\vert, \vert b\vert\}$.
Then the polynomials $P_{0}=P, P_{1}=TP, \ldots, P_{n-1}=T^{n-1}P$ have degree at most $n$, satisfy $H(P_{i})=H(P)$ and 
\[
\vert P_{i}(\zeta)\vert \ll_{n,\zeta} H(P_{i})^{-w+\epsilon}, \qquad 0\leq i\leq n-1. 
\]
Moreover the $P_{i}$ are obviously linearly independent. Thus $w_{n,n}(\zeta)\geq w$, and hence by assumption
$w_{n,1}(\zeta)=w_{n,2}(\zeta)=\cdots=w_{n,n}(\zeta)=w$. This fact can be translated
in the language of the values $\underline{\psi}^{\ast},\overline{\psi}^{\ast}$ defined above as
$-n\underline{\psi}^{\ast}_{n,1}=\overline{\psi}_{n,n+1}^{\ast}$, see the remark in~\cite{ss} and its proof quoted above.
Mahler's duality stated yields the equivalent claim $\underline{\psi}_{n,1}=-n\overline{\psi}_{n,n+1}$. 
Hence there is equality in the right Khintchine inequality \eqref{eq:kinschin}
as carried out above, that is
\[
\lambda_{n}(\zeta)=\frac{w_{n}(\zeta)-n+1}{n}=\frac{w-n+1}{n}.
\]
Thus with Theorem~\ref{korola} we have
\begin{equation} \label{eq:anedobvious}
\widehat{w}_{=n}^{\ast}(\zeta)\geq \frac{1}{\lambda_{n}(\zeta)}=\frac{n}{w-n+1}.
\end{equation}
For the reverse inequality
notice that on the other hand the span of $\{P_{0},\ldots,P_{n-1}\}$
contains only polynomial multiples of $P_{0}$ and thus no irreducible $Q\in \mathbb{Z}_{=n}[T]$ 
(even no irreducible polynomial of degree $2\leq d\leq n$). Thus if
we consider parameters $X$ of the form $X=H(P_{0})$ in \eqref{eq:sukmini2}, we 
conclude that
\begin{equation}  \label{eq:notobvious}
\widehat{w}_{=n}(\zeta)\leq \widehat{w}_{n,n+1}(\zeta). 
\end{equation}
Combination of the left estimate
in the right inequality of \eqref{eq:allesfa}, Mahler's identity \eqref{eq:mahler},
\eqref{eq:anedobvious} and \eqref{eq:notobvious} yields
\[
\frac{n}{w-n+1}= \frac{1}{\lambda_{n}(\zeta)}\leq \widehat{w}_{=n}^{\ast}(\zeta)\leq \widehat{w}_{=n}(\zeta)
\leq \widehat{w}_{n,n+1}(\zeta)=\frac{1}{\lambda_{n}(\zeta)}=\frac{n}{w-n+1}.
\]
Hence \eqref{eq:erstard} follows.
\end{proof}

\begin{remark}
The proof shows that any $\zeta\in\mathscr{D}_{n,w}$
provides equality in the right inequality of \eqref{eq:jodad}.
\end{remark}

For the proof of Theorem~\ref{liouspez} we recall~\cite[Lemma~3.1]{bschlei}, where we drop the originally
involved condition which is easily seen not to be 
required for the conclusion. 

\begin{lemma} \label{bslemma}
Assume $P$ and $Q$ are coprime polynomials of degree $m$ and $n$ respectively,
and $\zeta$ is a real number. Then
at least one of the inequalities
\begin{equation} \label{eq:nedwoa}
\vert P(\zeta)\vert\gg_{m,n,\zeta} H(P)^{-n+1}H(Q)^{-m}, \qquad \vert Q(\zeta)\vert\gg_{m,n,\zeta} H(P)^{-n}H(Q)^{-m+1}
\end{equation}
holds.
\end{lemma}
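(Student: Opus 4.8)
The plan is to use the resultant of $P$ and $Q$ as the basic obstruction. Since $P$ and $Q$ are coprime, their resultant $\mathrm{Res}(P,Q)$ is a non-zero integer, hence $|\mathrm{Res}(P,Q)|\geq 1$. The resultant has a classical upper bound in terms of the heights and degrees: writing it as the determinant of the $(m+n)\times(m+n)$ Sylvester matrix whose entries are coefficients of $P$ and $Q$, Hadamard's inequality gives $|\mathrm{Res}(P,Q)|\ll_{m,n} H(P)^{n}H(Q)^{m}$. So far this only uses integrality; the real content is to feed in the information that $P$ and $Q$ are both small at the specific point $\zeta$.

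The key step is to re-expand the resultant around $\zeta$. One standard way: $\mathrm{Res}(P,Q)=a^{n}\prod_{i}Q(\alpha_{i})$ where $a$ is the leading coefficient of $P$ and $\alpha_{i}$ its roots, but the cleaner route here is to pass to a linear algebra identity at $\zeta$. Consider the $(m+n)$-dimensional vector space of polynomials of degree $<m+n$, together with the evaluation-type functionals. Concretely, one shows $\mathrm{Res}(P,Q)$ equals (up to a non-zero integer factor bounded by $\ll_{m,n}1$, or exactly, depending on the normalization) the determinant of the matrix whose rows are the coefficient vectors of $T^{j}P(T)$ for $0\leq j<n$ and $T^{k}Q(T)$ for $0\leq k<m$. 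Perform column operations that amount to re-expressing everything in the basis $\{(T-\zeta)^{i}\}$: this transformation is unimodular up to a factor depending only on $\zeta,m,n$, and it turns the first column (the "constant term at $\zeta$" column) into the vector $(P(\zeta),\ldots,\zeta\text{-shifts}\ldots,Q(\zeta),\ldots)$. Expanding the determinant along that column, every entry is either $\ll_{m,n,\zeta}H(P)$ or $\ll_{m,n,\zeta}H(Q)$ except the distinguished entries which are $P(\zeta)$ (appearing $n$ times, from the rows $T^{j}P$) and $Q(\zeta)$ (appearing $m$ times). Bounding the cofactors by Hadamard again, one obtains
\[
1\leq |\mathrm{Res}(P,Q)|\ll_{m,n,\zeta} |P(\zeta)|\cdot H(P)^{n-1}H(Q)^{m}+|Q(\zeta)|\cdot H(P)^{n}H(Q)^{m-1}.
\]
Hence at least one of the two summands is $\gg_{m,n,\zeta}1$, which is exactly one of the two inequalities in \eqref{eq:nedwoa}.

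I expect the main obstacle to be the bookkeeping in the determinant manipulation: making precise that the change of basis to $(T-\zeta)^{i}$ introduces only a harmless non-zero factor depending on $m,n,\zeta$, and tracking which entries in the modified Sylvester matrix carry a factor $P(\zeta)$ or $Q(\zeta)$ versus which are merely $O(H(P))$ or $O(H(Q))$. An alternative that sidesteps the explicit matrix is to use the identity $\mathrm{Res}(P,Q)=U(\zeta)P(\zeta)+V(\zeta)Q(\zeta)$ for suitable cofactor polynomials $U,V\in\mathbb{Z}[T]$ with $\deg U<n$, $\deg V<m$ and coefficients bounded by $\ll_{m,n}H(P)^{n-1}H(Q)^{m-1}\cdot(\text{something})$ — but one must check the height bounds on $U,V$ give precisely the asymmetric exponents claimed, and the raw Bézout bound is slightly lossy, so the Sylvester-determinant route is the one I would write up. Either way the numerical constant is irrelevant for the application, so I would not optimize it.
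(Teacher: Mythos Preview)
The paper does not prove this lemma; it is quoted from \cite[Lemma~3.1]{bschlei} without argument. Your resultant-based approach is correct and is the standard proof of statements of this type, almost certainly the one in the cited reference.

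One small simplification over your sketch: you do not need a full change of basis to powers of $(T-\zeta)$, and no extraneous factor appears. It is enough to add $\zeta^{i}$ times the column recording the coefficient of $T^{i}$ to the constant-term column of the Sylvester matrix; these column operations leave the determinant exactly unchanged. The modified constant-term column then has entries $\zeta^{j}P(\zeta)$ in the rows coming from $T^{j}P$ ($0\leq j<n$) and $\zeta^{k}Q(\zeta)$ in the rows coming from $T^{k}Q$ ($0\leq k<m$). Expanding along that column, each cofactor is an $(m+n-1)\times(m+n-1)$ minor of the \emph{original} Sylvester matrix with one $P$-row (respectively one $Q$-row) removed, so Hadamard's inequality applied row-wise gives the bounds $\ll_{m,n}H(P)^{n-1}H(Q)^{m}$ and $\ll_{m,n}H(P)^{n}H(Q)^{m-1}$ respectively. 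Absorbing the factors $\zeta^{j},\zeta^{k}$ into the implied constant yields
\[
1\leq \vert\mathrm{Res}(P,Q)\vert \ll_{m,n,\zeta} \vert P(\zeta)\vert\, H(P)^{n-1}H(Q)^{m}+\vert Q(\zeta)\vert\, H(P)^{n}H(Q)^{m-1},
\]
and the lemma follows. The B\'ezout route you mention also works and gives the same exponents, since the coefficients of the cofactor polynomials $U,V$ are precisely these same minors.
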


\begin{proof}[Proof of Theorem~\ref{liouspez}]    
We will again only deal with the case $w<\infty$, the case $w=\infty$ can be treated very similarly using \eqref{eq:unendlich}. 
So let $w\in [2n-1, \infty)$ and $\zeta\in \mathscr{B}_{w}$.
Let us assume $\rho>0$ is fixed and $Q$ is an irreducible polynomial of degree exactly $n$ such that 
\begin{equation} \label{eq:annahme}
\vert Q(\zeta)\vert \leq H(Q)^{-t-\rho}, \qquad t=\frac{nw}{w-n+1}.
\end{equation}
For every convergent $p_{j}/q_{j}$ to $\zeta$
let $P_{j}(T)=q_{j}T-p_{j}$.  
Then, as pointed out in~\cite{bug}, we have $\vert P_{j}(\zeta)\vert \asymp_{n,\zeta} H(P_{j})^{-w}$ and
$H(P_{j+1})\asymp_{n,\zeta} H(P_{j})^{w}$ for $j\geq 1$.
Let $i$ be the index for which $H(P_{i})=q_{i}\leq H(Q)<q_{i+1}=H(P_{i+1})$, where we used $\zeta\in(0,1)$.
Clearly $P_{j}$ is coprime to $Q$ for all $j\geq 1$, since $P_{j}$ have degree one and $Q$ is irreducible of degree $n\geq 2$.
Thus we can apply Lemma~\ref{bslemma} with $m=1, n$ and the pair of polynomials $P_{j},Q$. 
Let $\delta>0$. In case of $H(Q)\leq H(P_{i})^{w-n+1-\delta}$,
for $j=i$ the left inequality of \eqref{eq:nedwoa} is violated as it would lead to
\begin{align*}
H(P_{i})^{-w+\delta}&= H(P_{i})^{-n+1}H(P_{i})^{-(w-n+1-\delta)}\\
&\leq H(P_{i})^{-n+1}H(Q)^{-1}\ll_{n,\zeta} 
\vert P_{i}(\zeta)\vert \ll_{n,\zeta} H(P_{i})^{-w},
\end{align*}
contradiction for large $i$. Thus we must have
$\vert Q(\zeta)\vert\gg_{n,\zeta} H(P_{i})^{-n}\geq H(Q)^{-n}$, contradicting the assumption \eqref{eq:annahme} 
for large $i$ since $t\geq n$. 
If otherwise $H(Q)\geq H(P_{i})^{w-n+1-\delta}$, then we apply Lemma~\ref{bslemma} for the polynomials $P_{i+1}$ and $Q$.
The left inequality in \eqref{eq:nedwoa} leads to
\[
H(P_{i+1})^{-w}\gg_{n,\zeta} \vert P_{i+1}(\zeta)\vert\gg_{n,\zeta} H(P_{i+1})^{-n+1}H(Q)^{-1}\geq  H(P_{i+1})^{-n}
\]
contradiction to $w>n$ for large $i$. Similarly the right inequality in \eqref{eq:nedwoa} leads to
\[
\vert Q(\zeta)\vert\gg_{n,\zeta} H(P_{i+1})^{-n}\gg_{n,\zeta} H(P_{i})^{-nw}\gg_{n,\zeta} H(Q)^{-nw/(w-n+1-\delta)}
\]
again a contradiction to \eqref{eq:annahme} for large $i$ if $\delta$ was chosen small enough that we still
have $t+\rho>nw/(w-n+1-\delta)$. Hence there can only be finitely many solutions to \eqref{eq:annahme} for any $\rho>0$
and irreducible $Q\in\mathbb{Z}_{=n}[T]$.
The claim \eqref{eq:holteufel} follows.
\end{proof}

For the deduction of Theorem~\ref{ce}, 
we apply the identities \eqref{eq:kannt} for $\zeta\in\mathscr{B}_{w}$. 
In fact the lower bound
\begin{equation} \label{eq:donduck}
\widehat{w}_{n}^{\ast}(\zeta)\geq \frac{w_{n}(\zeta)}{w_{n}(\zeta)-n+1}
= \frac{w}{w-n+1}
\end{equation}
established
by Bugeaud and Laurent~\cite[Theorem~2.1]{buglaur},
would suffice.

\begin{proof}[Proof of Theorem~\ref{ce}]
Combination of \eqref{eq:erstard} with \eqref{eq:kannt} yields 
$\widehat{w}_{=n}(\zeta)= n/(w-n+1)<n=\widehat{w}_{n}(\zeta)$, as soon as $w>n$. 
Similarly, from \eqref{eq:kannt} we infer $\widehat{w}_{=n}^{\ast}(\zeta)=n/(w-n+1)<w/(w-n+1)=\widehat{w}_{n}^{\ast}(\zeta)$.
Thus we have shown \eqref{eq:lucia}.
For \eqref{eq:bleiben}, again \eqref{eq:kannt}
implies $\widehat{w}_{n}^{\ast}(\zeta)=w/(w-n+1)>1$ strictly, 
as soon as $w<\infty$. On the other hand, when $w\geq 2n-1$, we readily check $\widehat{w}_{=n}(\zeta)= n/(w-n+1)\leq 1$,
and similarly $\widehat{w}_{=m}(\zeta)\leq 1$ for $1\leq m\leq n$.
The remaining estimates of \eqref{eq:bleiben} are obvious consequences of \eqref{eq:wepper}, \eqref{eq:bbb} and \eqref{eq:allesfa}
and the previous observation.

When $w>2n-1$, from \eqref{eq:holteufel} we infer $w_{=n}^{\ast}(\zeta)\leq w_{=n}(\zeta)\leq nw/(w-n+1)< 2n-1<w
= w_{n}^{\ast}(\zeta)=w_{n}(\zeta)$, which shows the two most left inequalities of \eqref{eq:allestickln}.
The uniform inequalities in \eqref{eq:allestickln} were already established in \eqref{eq:lucia} under the weaker condition $w>n$.
\end{proof}

\begin{proof} [Proof of Theorem~\ref{prof}]
We have $\widehat{w}_{n,n}(\zeta)\geq 1$ for any real 
$\zeta$ not algebraic of degree at most $n$. More
generally the analogous exponent assigned to any
$\underline{\zeta}\in\mathbb{R}^{n}$ that is $\mathbb{Q}$-linearly
independent together with $\{1\}$ is bounded below by $1$.
This follows from the results in~\cite{ss}. On the other hand, 
$\widehat{w}_{=n}^{\ast}(\zeta)=\widehat{w}_{=n}(\zeta)=n/(w-n+1)<1$ 
for $\zeta\in\mathscr{D}_{n,w}$ when $w>n$, by Theorem~\ref{bs}. 
Combination shows the first claim.
For the second assertion,
we first claim $w_{n,n}(\zeta)=w_{n}(\zeta)=w$. 
For any convergent $p/q$ to $\zeta\in\mathscr{D}_{n,w}$
consider $P(T)=qT-p$ and the derived linearly independent polynomials 
$\{D_{1},D_{2},\ldots,D_{n}\}=\{ P,TP,\ldots,T^{n-1}P\}$. 
We clearly have $H(D_{j})=H(P)$
and $\vert D_{j}(\zeta)\vert \asymp_{n,\zeta} \vert P(\zeta)\vert$. This shows $w_{n,n}(\zeta)\geq w_{1}(\zeta)=w$,
the reverse inequality $w=w_{n}(\zeta)\geq w_{n,n}(\zeta)$ is obvious.
On the other hand, from Theorem~\ref{liouspez} we obtain
$w_{=n}^{\ast}(\zeta)\leq w_{=n}(\zeta)=nw/(w-n+1)<w$ for
$\zeta\in\mathscr{D}_{n,w}$ with $w>2n-1$. This concludes the proof
of the second claim.   
\end{proof}

We prove Theorem~\ref{algint}, 
in a similar way as Theorem~\ref{bs}.

\begin{proof}[Proof of Theorem~\ref{algint}]
Notice that the assumptions are precisely as in Theorem~\ref{bs}. For
the left inequality, as in the proof of Theorem~\ref{bs} we obtain
$\lambda_{n-1}(\zeta)=(w-(n-1)+1)/(n-1)$ for 
$\zeta\in\mathscr{D}_{n,w}$ 
since $w\geq n \geq n-1$ (index shift $n$ to $n-1$ compared 
to Theorem~\ref{bs}). 
Thus \eqref{eq:dregsrr} indeed yields
\[
\widehat{w}_{n}^{\ast int}(\zeta) \geq \frac{1}{\lambda_{n-1}(\zeta)}= \frac{n-1}{w-n+2}.
\]

The most right inequality of \eqref{eq:lebal} remains to be proved.
For simplicity put $v=n/(w-n+1)$.
In the proof of Theorem~\ref{bs} we noticed that 
$\zeta\in\mathscr{D}_{n,w}$ satisfies $\widehat{w}_{n,n+1}(\zeta)=v$.
More precisely, the proof showed that for any $\epsilon>0$ there are arbitrarily large parameters
$X$ such that every solution $P\in \mathbb{Z}_{\leq n}[T]$ of 
\begin{equation} \label{eq:phelps}
H(P)\leq X, \qquad \vert P(\zeta)\vert \leq X^{-v-\epsilon}
\end{equation}
is a polynomial multiple of a linear polynomial $Q(T)=aT+b$.
Here $-b/a$ is a very good rational approximation (in particular a convergent) to $\zeta$. 
By elementary facts on continued fractions we clearly have $a>1$ 
and $(a,b)=1$. 
It follows from the Lemma of Gau\ss\
that every polynomial multiple $U(T)=R(T)Q(T)$ of $Q$ with arbitrary 
$R\in\mathbb{Q}[T]$ which has integral coefficients
$U\in\mathbb{Z}[T]$, must actually arise from $R\in\mathbb{Z}[T]$. Thus $U(T)$ has 
leading coefficient divisible by $a$ and hence is not monic. 
In other words, for parameters $X$ as above every monic polynomial 
$P\in \mathbb{Z}_{\leq n}[T]$ with $H(P)\leq X$ must satisfy
\[
\vert P(\zeta)\vert \geq X^{-v-\epsilon}.
\]
The right inequality in \eqref{eq:lebal} follows as we may let $\epsilon$ tend to $0$.
The equivalence claim \eqref{eq:propella}
for Liouville numbers follows immediately 
from the upper and lower bound in \eqref{eq:lebal}.
\end{proof}

The proof more precisely shows the finiteness of solutions 
$P\in\mathbb{Z}_{\leq n}[T]$ to \eqref{eq:phelps} with bounded leading coefficient when $\zeta\in\mathscr{D}_{n,w}$.

\section{Some open problems}  \label{openp}

In this section we formulate selected open problems, mainly concerning our new exponents 
for approximation of exact degree.
Some of them have already been addressed, explicitly or implicitly, in the course of the paper.
First we discuss several variants of Wirsing's problem 
which we introduced right at the beginning in Section~\ref{out}.

\begin{problem} \label{pr1}
Is it true that for any transcendental real $\zeta$ and every $n\geq 3$ we have $w_{=n}^{\ast}(\zeta)\geq n$?
Does even the estimation
\begin{equation} \label{eq:spirit}
\vert \zeta-\alpha\vert \ll_{n,\zeta} H(\alpha)^{-n-1}
\end{equation}
have infinitely many solutions $\alpha\in \mathbb{A}_{=n}$?
Similarly, is it true that for every $n\geq 3$ we have $w_{n+1}^{\ast int}(\zeta)\geq n$, or more generally
 $w_{=m}^{\ast int}(\zeta)\geq n$ for every $m\geq n+1$?
What about refinements in the spirit of \eqref{eq:spirit}?
\end{problem}  

Recall we have shown \eqref{eq:spirit} for $n=2$ in Theorem~\ref{bugpro}, whereas
$w_{3}^{\ast int}(\zeta)<2$ for certain extremal numbers was pointed out in \eqref{eq:damienr}. Next we discuss variants of the 
related natural question discussed in Section~\ref{natu}.

\begin{problem} \label{pro2}
Do we have $w_{=n}(\zeta)\geq n$ for all $n\geq 4$ and any transcendental real number $\zeta$?
Is it even true that the inequality
\begin{equation} \label{eq:bahamas}
\vert P(\zeta)\vert \ll_{n,\zeta} H(P)^{-n}
\end{equation}
has infinitely many solutions $P\in \mathbb{Z}_{=n}[T]$?
What about $w_{=n+1}^{int}(\zeta)\geq n$ for $n\geq 3$?
\end{problem}

As pointed out we strongly believe the answer to be positive
at least for $w_{=n}(\zeta)$. We cannot prove the stronger
condition \eqref{eq:bahamas} even for $n=3$, for
if $\widehat{w}_{3}(\zeta)=3$ the method of Theorem~\ref{basned} 
would require a bound of order $O(1)$ for the smallest suitable 
prime $p$ in the auxiliary Theorem~\ref{irrpol}.
On the other hand, observe that 
$\widehat{w}_{=n}(\zeta)<n$ holds for certain $\zeta$, 
as follows from Theorem~\ref{bs}.
In Theorem~\ref{ce} we saw that the exponents of bounded degree
can differ vastly from the exponents of exact degree.
However, we may ask for a generalization of Theorem~\ref{grad2}.

\begin{problem} \label{p3e}
Assume $n\geq 3$ is an integer and $\zeta$ is a transcendental real number with $\widehat{w}_{n}(\zeta)>n$. 
Is it true that
\[
w_{=n}(\zeta)= w_{n}(\zeta), \qquad \widehat{w}_{=n}(\zeta)= \widehat{w}_{n}(\zeta), \qquad w_{=n}^{\ast}(\zeta)=w_{n}^{\ast}(\zeta),
\qquad \widehat{w}_{=n}^{\ast}(\zeta)= \widehat{w}_{n}^{\ast}(\zeta)
\]
necessarily holds? Further, is it true that $\zeta$ cannot
be a $U$-number?
\end{problem}

The claim could potentially be true in a trivial sense in case no 
number satisfies the condition.
For $n\geq 3$ we cannot rule out that $\zeta$ is a $U_{m}$-number of index $2\leq m\leq n-1$, which is an empty 
range for $n=2$ as in Theorem~\ref{grad2}.
The next question concerns the relation between approximation by algebraic numbers
versus algebraic integers.

\begin{problem} \label{problem4}
Let $n\geq 1$ be an integer.
Does there exist transcendental real $\zeta$ such 
that $w_{=n}(\zeta)<w_{=n+1}^{int}(\zeta)$ or $w_{=n}^{\ast}(\zeta)<w_{=n+1}^{\ast int}(\zeta)$?
Similarly for $\widehat{w}_{=n}(\zeta)<\widehat{w}_{=n+1}^{int}(\zeta)$ or $\widehat{w}_{=n}^{\ast}(\zeta)<\widehat{w}_{=n+1}^{\ast int}(\zeta)$.
More general, determine the spectra of $w_{=n}(\zeta)-w_{=n+1}^{int}(\zeta), w_{=n}^{\ast}(\zeta)-w_{=n+1}^{\ast int}(\zeta),
\widehat{w}_{=n}(\zeta)-\widehat{w}_{=n+1}^{int}(\zeta)$ and $\widehat{w}_{=n}^{\ast}(\zeta)-\widehat{w}_{=n+1}^{\ast int}(\zeta)$.
\end{problem}

The estimate \eqref{eq:damienr} for some extremal numbers showed that $w_{=n+1}^{int}(\zeta)<n$ is possible, at least for $n=2$.
It seems that conversely numbers which are very well approximable by algebraic integers have not been constructed yet for any degree.

\begin{problem}
For $n\geq 1$, construct real transcendental $\zeta$ for which $w_{=n+1}^{int}(\zeta)>n$, or even $w_{=n+1}^{\ast int}(\zeta)>n$.
\end{problem}

The next problem is much more general and a complete answer seems out of reach.

\begin{problem}
Determine the spectra of the new exponents $w_{=n}, w_{=n}^{\ast}, \widehat{w}_{=n},\ldots$.
\end{problem}

We have noticed that combination of Corollary~\ref{sb} and Corollary~\ref{najor} yields
that a transcendental real number is a Liouville number if and only if
$\widehat{w}_{=n}(\zeta)=0$, or equivalently $\widehat{w}_{=n}^{\ast}(\zeta)=0$, for all $n\geq 1$.
Recall also the characterization \eqref{eq:propella} for Liouville numbers.
A natural related question for the exponents $\widehat{w}_{n}^{\ast}(\zeta)$ remains partly open.

\begin{problem} \label{prob}
Is a transcendental real number $\zeta$ a Liouville number if and only if
\begin{equation} \label{eq:full}
\widehat{w}_{n}^{\ast}(\zeta)=1, \qquad  n\geq 1,
\end{equation}
holds?
\end{problem}

As stated in Section~\ref{bounded}, we have $\widehat{w}_{n}^{\ast}(\zeta)\geq 1$ and any Liouville number
has the property \eqref{eq:full}.
On the other hand, the estimate \eqref{eq:donduck} implies
that $w_{2}(\zeta)=\infty$ is necessary for \eqref{eq:full}.
We conclude that $\zeta$ must be either a Liouville number or a $U_{2}$-number.
Our next problem is motivated by Theorem~\ref{algint}.

\begin{problem}
Determine $\widehat{w}_{n}^{int}(\zeta)$ and $\widehat{w}_{n}^{\ast int}(\zeta)$
for $\zeta\in\mathscr{D}_{n,w}$, 
or at least for the special examples $\zeta\in\mathscr{B}_{w}$.
\end{problem}

We have noticed below Theorem~\ref{algint} that it is plausible to believe in equality with the left bound in \eqref{eq:lebal}.
We conclude with another problem which stems from Theorem~\ref{algint}, concerning inhomogeneous approximation.
We only state the case $n=1$ explicitly.

\begin{problem} \label{probo}
Let $\zeta$ be a Liouville number. For $\alpha\in\mathbb{R}$ 
denote by $\widehat{w}_{1}(\zeta,\alpha)$ the supremum of exponents $w$ for which
\[
1\leq x_{1} \leq X, \qquad  \vert \alpha+x_{0}+\zeta x_{1}\vert \leq X^{-w}
\]
has a solution in integers $x_{0},x_{1}$ for all large $X$. Does 
the spectrum of $\widehat{w}_{1}(\zeta,\alpha)$ contain (or even equal) the interval $[0,1]$?

\end{problem}

As remarked above it follows from~\cite{bula} that $\widehat{w}_{1}(\zeta,\alpha)=0$ for almost all $\alpha$, and by our results,
including any $\alpha$ of the form $Q(\zeta)$ with $Q\in\mathbb{Q}_{\geq 2}[T]$. Moreover $\{1\}$ is contained in 
spectrum, and we may take $\alpha=Q(\zeta)$ with any $Q\in\mathbb{Q}_{\leq 1}[T]$. In contrast to
$\widehat{w}_{1}(\zeta)=\widehat{w}_{1}(\zeta,0)=1$ 
for all $\zeta$, it seems that $\widehat{w}_{1}(\zeta,\alpha)>1$
cannot be excluded for arbitrary $\alpha$ with the current knowledge.

See also~\cite[Section~10.2]{bugbuch} and~\cite{bdraft} for
several problems concerning the classic exponents $w_{n}, \widehat{w}_{n}, w_{n}^{\ast}, \widehat{w}_{n}^{\ast}, \lambda_{n}, \widehat{\lambda}_{n}$
(some questions of the first reference have already been solved).

\vspace{0.5cm}

Many 
thanks to Yann Bugeaud for providing references concerning Theorem~\ref{korola} and Theorem~\ref{ppp},
and to Damien Roy for help with the presentation of the results!

\end{document}